\newtheorem{thm}{Theorem}[section]
\newtheorem{lem}[thm]{Lemma}
\newtheorem{prop}[thm]{Proposition}
\newtheorem{subcriterion}[thm]{Sub-criterion}
\theoremstyle{definition}
\newtheorem{defn}[thm]{Definition}
\newtheorem{example}[thm]{Example}
\theoremstyle{remark}
\newtheorem{rem}[thm]{Remark}
\numberwithin{equation}{section}
\begin{document}

\title[Hyperbolicity of Brunnian Links]
{Hyperbolicity of Brunnian Links}

\author{Sheng Bai}
\address{Beijing, China}
\email{barries@163.com}

\subjclass{57M25}

\keywords{Brunnian links, hyperbolic links, links with unknotted components, surface system, intuitive proofs, hyperbolic manifolds, satellite operations, JSJ-decomposition}

\begin{abstract}
We present new techniques to show hyperbolicity of links based on geometric/combinatorial topology. Our techniques are applicable to links that have at least one unknotted component. In particular, they are applicable to Brunnian links. We illustrate our techniques on many examples of Brunnian links. In fact, we determine hyperbolicity for all Brunnian links found in literature. In particular, we discover many infinite families of hyperbolic Brunnian links.
\end{abstract}

\date{Mar 1 , 2022}
\maketitle

\begin{figure}[htbp]
		  \centering
    \includegraphics[width=\textwidth]{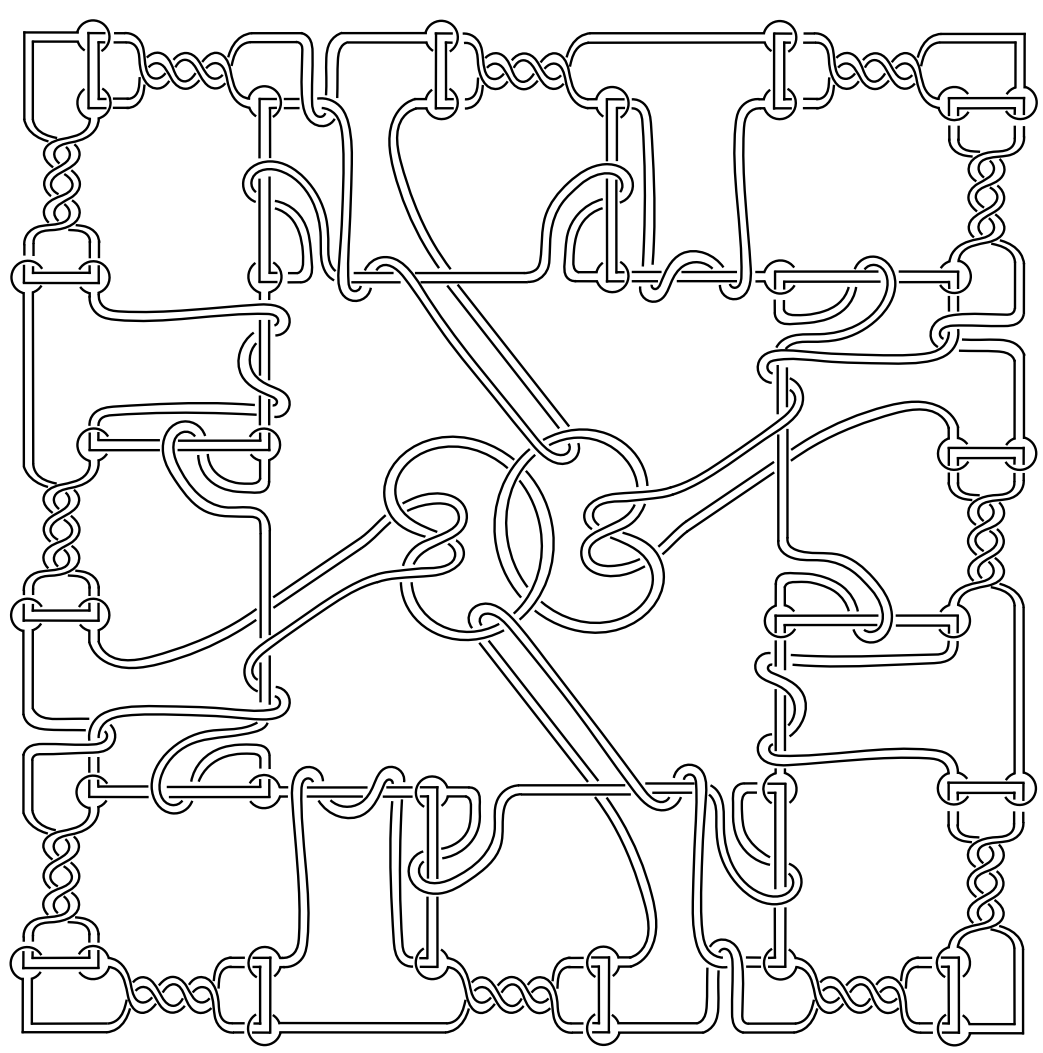}
    \caption{A hyperbolic Brunnian link: Cirrus in \cite{BW}.}
	\label{fig:cirrus}
\end{figure}

\tableofcontents

\section{Introduction}\label{sect:introduction}
Although knot invariants have been extensively developed in recent decades, there is still a lack of intuitively convenient solutions to many basic problems of knots. The study of links encompasses the study of knots, but this is not necessarily the case for links. As shown in \cite{BW}, intuitive methods, based on observation and worked by hand, have been established to show nontriviality of links with at least one unknotted component. In this paper, we explore convenient manual methods to detect hyperbolicity of links with at least one unknotted component, mainly focusing on Brunnian links. 

\subsection{Background of Brunnian links}

A link in $S^3$ is \emph{Brunnian} if it is nontrivial and its proper sublinks are all trivial. 
Brunnian links form an important class in the study of links. Analyzing a link, we may naturally consider its sublinks. The nontrivial sublinks with least components are exactly knots and Brunnian links. In this sense, knots and Brunnian links can be viewed as building blocks for general links. 

Brunnian links were introduced by H. Brunn\cite{Brunn} in 1892, but the Brunnian property of Brunn's links was not proved until 1961\cite{HDb}. Since then, many authors constructed plenty of examples, which, to our best knowledge, all appeared in \cite{Brunn, HDb, Rolfsen, BFJRVZ, BS, BCS, L, J, F, M, MM, BW}. In recent years, Baas et al. \cite{BFJRVZ, BS, BCS} constructed several infinite families of Brunnian links, and based on their inspiration, the authors of \cite{BW} constructed new Brunnian links in bulk in 2020, far more than have been previously constructed.

We summarise the main theoritical results on Brunnian links in \cite{BM} as follows. 

Firstly, two special satellite operations were investigated in \cite{BM}. A classical theorem by Alexander states that every smooth torus bounds at least one solid torus in $S^3$. We say a torus in $S^3$ is \emph{knotted} if it bounds only one solid torus in $S^3$; otherwise, \emph{unknotted}.

\begin{defn}\label{def:sum}
\cite{BM} Let $L$ and $L'$ be links in $S^3$, and $C \subset L$ and $C' \subset L'$ be oriented unknotted components. A homeomorphism $h: S^3 -int N(C') \longrightarrow N(C)$ maps the oriented meridian of $N(C')$  to the oriented longitude of $N(C)$ and maps the oriented longitude of $N(C')$  to the oriented meridian of $N(C)$, where $N(C)$ is the regular neighbourhood of $C$. Then the link $(L-C) \sqcup h(L'-C')$ is the \emph{s-sum} of $L(C)$ and $L'(C')$.
\end{defn}

For Brunnian links, we have
\begin{thm}\cite{BM}\label{thm:sum}
If a torus $T$ in $S^3$ splits a Brunnian link $L$, then $T$ is incompressible in $S^3 - L$, and $L$ is decomposed by $T$ as an s-sum of two Brunnian links.
\end{thm}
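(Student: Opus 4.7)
My plan is to establish the two assertions in turn: incompressibility of $T$, and the s-sum decomposition into Brunnian pieces.

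\emph{Step 1 (Incompressibility).} I argue by contradiction. Suppose $D$ is a compression disk for $T$ in $S^3\setminus L$. By Alexander's theorem at least one side of $T$, say $V_I$, is a solid torus; write $V_J$ for the other side and put $L_I = L\cap V_I$, $L_J = L\cap V_J$, both nonempty because $T$ splits $L$. Since the boundary of a nontrivial knot complement is incompressible, $D$ must lie in $V_I$, and in the solid torus $V_I$ the only essential curves on $T$ that bound disks are meridians; hence $D$ is a meridional disk disjoint from $L_I$, and $V_I$ cut along $D$ is a $3$-ball containing $L_I$. Thus $L_I$ and $L_J$ are separated by a sphere, so $L$ is split. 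But a Brunnian link with $n\ge 2$ components is never split: both halves $L_I,L_J$ are proper sublinks and hence unlinks by the Brunnian hypothesis, and the split union of unlinks is itself an unlink, contradicting the non-triviality of $L$. Therefore $T$ is incompressible.

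\emph{Step 2 (Unknottedness and s-sum structure).} Next I would show $T$ is unknotted, so that both $V_I$ and $V_J$ are standard solid tori. Suppose instead $V_J$ were a nontrivial knot complement. The proper sublink $L_J$ is an unlink, hence bounds a disjoint family of disks in $S^3$; by minimizing intersections with $T\cup L_I$ and applying innermost-circle arguments (resolving inessential circles on $T$ via incompressibility from Step 1, and taming intersections with $L_I$ by removing a component $C_i\in L_I$ so that $L\setminus C_i$ is an unlink admitting a compatible spanning system), I would obtain a disk in $V_J$ bounded by some component of $L_J$, placing that component in a ball inside $V_J$. This again exhibits a splitting of $L$, contradicting Step 1. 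So $V_J$ is a standard solid torus. Writing $c_I,c_J$ for the cores of $V_I,V_J$, the identification $\partial V_I=T=\partial V_J$ in $S^3$ exchanges meridian and longitude of the two framings, which is exactly the homeomorphism $h$ in the definition of s-sum. Setting $L':=L_I\cup c_J$ and $L'':=L_J\cup c_I$, the identity $(L'\setminus c_J)\sqcup h(L''\setminus c_I)=L$ displays $L$ as the s-sum of $(L',c_J)$ and $(L'',c_I)$.

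\emph{Step 3 (Brunnian factors).} Finally I verify that $L'$ and $L''$ are Brunnian; by symmetry I treat $L'$. A sublink of $L'$ not containing $c_J$ is a sublink of $L$, hence an unlink. For a sublink $(L_I\setminus S)\cup c_J$ with $\emptyset\ne S\subseteq L_I$, I remove a component $C_i\in S$ from $L$; the unlink $L\setminus C_i$ supplies disjoint spanning disks for $(L_I\setminus S)\cup L_J$, which I combine with a meridian disk of $V_I$ bounding $c_J$ (available since $V_I$ is a standard solid torus and the meridian can be chosen missing $L_I\setminus S$) to exhibit $(L_I\setminus S)\cup c_J$ as an unlink. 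Non-triviality of $L'$ follows because if $L'$ were an unlink, the symmetric construction on the $V_J$ side using the unlink $L_J$ would make $L$ an unlink, contradicting its non-triviality.

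\emph{Main obstacle.} The hardest step, and the linchpin of the whole argument, is the innermost-disk manipulation in Step 2 that establishes unknottedness of $T$: one must convert the unlink structure of the proper sublink $L_J$ into a compression disk on the knotted side, which requires a coordinated choice of spanning disks interacting cleanly with both $T$ and the components of $L_I$. The hybrid-sublink case in Step 3 demands a similar but milder combinatorial analysis, again leveraging incompressibility of $T$ together with the triviality of every proper sublink of $L$.
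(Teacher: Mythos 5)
The paper does not prove this statement at all — it is quoted from \cite{BM} — so your proposal can only be judged on its own merits. Step 1 is correct and standard. The serious gap is Step 2, the unknottedness of $T$, which you rightly identify as the linchpin but whose sketch does not work as described. You propose to take spanning disks for the unlink $L_J$, delete a component $C_i\in L_I$ to get a ``compatible spanning system'' for $L\setminus C_i$, and push a disk into $V_J$ by innermost-circle moves. But the disks for $L\setminus C_i$ may still meet $C_i$, and an innermost intersection circle with $T$ whose subdisk lies in $V_I$ can be essential on $T$: it is then a meridian disk of $V_I$ meeting $C_i$, which neither the incompressibility of Step 1 nor anything else lets you eliminate, so the process need not terminate with a disk in $V_J$. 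Moreover your target is suspect: a component of $L_J$ bounding a disk in $V_J$ disjoint from $L$ is exactly what is \emph{false} for honest s-sums (unknotted $T$), and your sketch uses the knottedness of $V_J$ only to forbid compressions on that side. The missing idea is to delete a component $C_j\in L_J$ (the knot-exterior side) instead: the exterior of the unlink $L\setminus C_j$ is a handlebody, which contains no closed incompressible surface, so $T$ compresses there; the compressing disk cannot lie in $V_J$ (a nontrivial knot exterior has incompressible boundary), hence it is a meridian disk of $V_I$, and since it misses $L\setminus C_j\supset L_I$ and lies in $V_I$ while $C_j\subset V_J$, it misses all of $L$ — contradicting Step 1. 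This closes Step 2 cleanly.

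Step 3 also has gaps. (a) The disjoint disks supplied by the unlink $L\setminus C_i$ span $(L_I\setminus S)\cup L_J$ but may intersect the core $c_J$, so they do not witness triviality of $(L_I\setminus S)\cup c_J$. (b) The ``meridian disk of $V_I$ bounding $c_J$ and missing $L_I\setminus S$'' need not exist: by Step 1 every meridian disk of $V_I$ meets $L_I$, and you have no control over which components it meets. (c) Non-triviality of $L'$ is asserted via a vague symmetry remark, not proved. All three are repaired by the same device as above: compress $T$ in the handlebody exterior of $L\setminus C_i$; the compressing disk must be a meridian disk of $V_I$ disjoint from $L\setminus C_i$, and cutting $V_I$ along it produces a sphere separating $L_I\setminus C_i\supseteq L_I\setminus S$ from $V_J\supset c_J$, so $(L_I\setminus S)\cup c_J$ is a split union of unlinks, hence an unlink. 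For non-triviality, if $L_I\cup c_J$ were an unlink, compress $T$ in its exterior: a compression in $V_J$ is impossible because the core $c_J$ meets every meridian disk of $V_J$, so one obtains a meridian disk of $V_I$ missing $L$, again contradicting Step 1. With these replacements your outline becomes a complete proof; as written, Steps 2 and 3 do not go through.
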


A Brunnian link, other than Hopf link, is called \emph{s-prime} (Def. 1.0.7 in \cite{BM}) if there is no unknotted essential torus in its complement space. In other words, a Brunnian link is s-prime if it is not a nontrivial s-sum.

\begin{defn}\label{def:tie}
\cite{BM} Let $L_0 \sqcup L^n$ be a link in $S^3$ where $L^n=\sqcup_{i=1}^n C^i$ is an oriented unlink, and $k_i$, $i=1,...,n$, be nontrivial knot types. Let $h: U_n =S^3 - int N(L^n) \longrightarrow S^3$ be an orientation-preserving embedding so that
\begin{align}
   S^3-h(U_n ) \cong \sqcup_{i=1}^n (S^3-N(k_i)), \nonumber
\end{align}
where $h(\partial N(C^i))$ corresponds to $\partial N(k_i)$, and the oriented meridian of $N(C^i)$ is mapped to the oriented null-homologous curve in $S^3-h(U_n)$ corresponding to the oriented longitude of $N(k_i)$. Then $L=h(L_0 )$ is an \emph{s-tie}(see Fig. \ref{fig:stie}).
\end{defn}
\begin{figure}[htbp]
		  \centering
    \includegraphics[width=\textwidth]{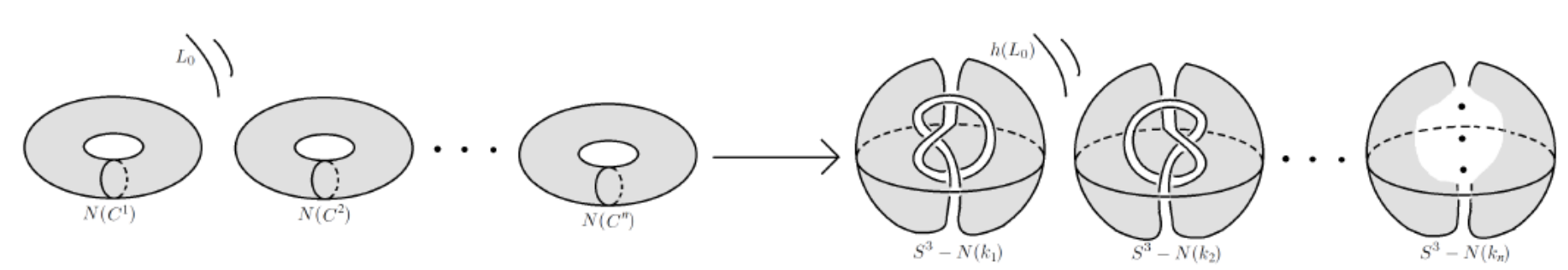}
    \caption{Construction of s-tie.}
	\label{fig:stie}
\end{figure}

For Brunnian links, we have
\begin{thm}\cite{BM}\label{thm:tie}
Every knotted essential torus in the complement of a Brunnian link bounds the whole link in the solid torus side.
\end{thm}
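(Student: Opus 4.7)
The plan is to prove Theorem \ref{thm:tie} by a case analysis on whether $T$ separates components of $L$ on both of its sides. Since $T$ is knotted, it bounds a unique solid torus $V \subset S^3$; let $W = \overline{S^3 \setminus V}$ denote the other side, which is the exterior of a nontrivial knot. Note that any component of $L$ lies entirely in $V$ or entirely in $W$, since $T \cap L = \emptyset$. The dichotomy is then: either $T$ splits $L$ (both $V$ and $W$ contain components of $L$), or all of $L$ lies on one side of $T$.

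First I would rule out the splitting case. If $T$ splits $L$, then by Theorem \ref{thm:sum} the pair $(S^3,L)$ decomposes along $T$ as an s-sum of two Brunnian links $L_1(C_1)$ and $L_2(C_2)$. Because every component of a Brunnian link is an unknot, $C_1$ is in particular unknotted. Unpacking the s-sum construction, $T$ is realised (up to isotopy in $S^3 \setminus L$, hence in $S^3$) as $\partial N(C_1)$; since $C_1$ is unknotted this torus bounds a solid torus on \emph{both} sides in $S^3$, so $T$ is unknotted. This contradicts the hypothesis that $T$ is knotted, so the splitting case cannot occur.

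In the remaining case, $L$ lies entirely on one side of $T$. If $L \subset W$, then $V$ would be a solid torus in $S^3 \setminus L$, and a meridian disk of $V$ would be a compressing disk for $T$ in $S^3 \setminus L$, contradicting essentiality of $T$. Hence $L \subset V$, which is precisely the claim that $T$ bounds the whole link on its solid torus side.

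The only technically delicate step is the identification of the splitting torus $T$ with $\partial N(C_1)$ for an unknot $C_1$ in the s-sum decomposition supplied by Theorem \ref{thm:sum}. This is really a definitional unpacking: the gluing homeomorphism $h$ in the s-sum construction identifies the exterior of a regular neighborhood of an unknotted component with a regular neighborhood of another unknotted component, so the gluing torus is $\partial N(C)$ for an unknot $C$, which is standardly unknotted in $S^3$. Granted this identification, the rest of the argument reduces to the one-line compressibility observation above.
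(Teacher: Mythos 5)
The paper does not actually prove this statement: it is imported verbatim from \cite{BM}, alongside Theorem \ref{thm:sum}, so there is no internal proof to compare your argument against. Judged on its own terms, your derivation is sound within the paper's framework. The non-splitting half is exactly right: if $L$ lay in the knot-exterior side $W$, a meridian disk of the unique solid torus $V$ would be a compressing disk for $T$ in $S^3-L$, contradicting essentiality, so $L\subset V$. For the splitting half you invoke Theorem \ref{thm:sum} and observe that the s-sum decomposing torus is $\partial N(C)$ for an unknotted component $C$, hence unknotted, contradicting knottedness of $T$; this is valid, and it is also the reading the present paper clearly intends (Lemma \ref{lem: hopf}, for instance, takes both sides of a splitting torus to be solid tori). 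Two caveats are worth stating explicitly rather than dismissing as ``definitional unpacking.'' First, the whole content of your splitting case sits in the interpretation that ``$L$ is decomposed \emph{by} $T$ as an s-sum'' means $T$ itself, up to isotopy in $S^3-L$ (hence in $S^3$, preserving unknottedness), is the s-sum torus; a weaker reading in which $L$ is merely abstractly an s-sum would leave a genuine gap, so you should justify or at least flag the identification. Second, since Theorems \ref{thm:sum} and \ref{thm:tie} come from the same source, your argument is non-circular only if \cite{BM} establishes Theorem \ref{thm:sum} (in particular, that a splitting torus bounds solid tori on both sides) independently of Theorem \ref{thm:tie}; this cannot be checked from the present paper, so as a self-contained proof your argument still owes an independent reason why a knotted torus cannot split a Brunnian link, which is the real geometric content hiding behind the citation.
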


A Brunnian link is called \emph{untied} (Def. 1.0.8 in \cite{BM}) if there is no knotted essential torus in its complement space. In other words, a Brunnian link is untied if it is not an s-tie. 

Secondly, the authors of \cite{BM} gave the geometric classification result for Brunnian links.

\begin{thm}\cite{BM} \label{thm:classification0}
If a Brunnian link $L$ is s-prime, untied, and not a $(2, 2n)-$torus link, then $L$ has a hyperbolic structure.
\end{thm}

The authors of \cite{BM} further characterized the geometric decomposition (the JSJ-decomposition) of Brunnian links and gave a canonical decomposition for Brunnian links, simpler than the JSJ-decomposition, called \emph{tree-arrow structure}. In such representation, $(2, 2n)-$torus links, hyperbolic Brunnian links and \emph{hyperbolic Brunnian links in unlink-complements} (Sec. 2 in \cite{BM}) turned out to be the building blocks of general Brunnian links.

Lastly, the authors of \cite{BM} defined an operation to reduce a Brunnian link in an unlink-complement into a new Brunnian link in $S^3$ and thus concluded that hyperbolic Brunnian links and $(2, 2n)-$torus links are building blocks of Brunnian links in the sense that they generate all Brunnian links by s-sum and s-tie.

We refer to \cite{MS,MY,H,LWZ,BM} for more theoretic results on Brunnian links, and to \cite{BCWW} for the relationship between Brunnian links and homotopy groups of $S^2$. We note that the construction of s-tie shows that Brunnian links contain all complexity of knots (see Fig. 3 in \cite{BM}), while focusing on hyperbolic Brunnian links involves no complexity of knots. In short, by \cite{BCWW}, constructions of new Brunnian links can theoretically give all nontrivial elements in high dimensional homotopy groups of $S^2$, and by s-sum and s-tie (see \cite{BM}), a discovery of hyperbolic Brunnian link implies discovery of infinitely many Brunnian links.

\subsection{Our attacks}

A question naturally occurs: in literature, how many Brunnian links are hyperbolic? To show a Brunnian link is hyperbolic, the two main tasks are to show it is untied and it is s-prime.

In \cite{BW}, the authors provided two intuitive and generally effective methods for detecting Brunnian property of a link, if each component is known to be unknotted. We cope with the two tasks by extending the new ideas of \cite{BW}. We utilize the disks bounded by some components, instead of diagrams or triangulations. Our methods are thus manual and practical, significantly different from the known algorithmatic methods.

Firstly, in Section \ref{sect:crediblestable}, we introduce our main tool, \emph{disk system}, that is a union of the link and some spanning disks under three regularity conditions, see Def. \ref{def:spanningcomplex}. Roughly speaking, a spanning disk is \emph{stable} if it intersects some other components of the link with the least number of points (Def. \ref{def:stable}). We show that stable disks are practically obtainable. A disk system is called \emph{stable} if all the spanning disks are stable.

In Section \ref{sect:untiedness}, we give a necessary and sufficient decision theorem for a Brunnian link to be untied.
\begin{thm}\label{thm:untie0}
(Decision Theorem for Untiedness)

Let $L$ be a Brunnian link and $U$ be a stable disk system. Then $L$ is untied if and only if there is no incompressible knotted torus in the complement of $U$.
\end{thm}

To show s-primeness for Brunnian links is generally more difficult. Rather than giving one criterion, we give the following universal intermediate theorem in Section \ref{sect:simpleintersection}, which synthesizes many special cases. Then in Section \ref{sect:sprime}, we provide four subcriteria to show s-primeness based on it.

\begin{thm}\label{thm:simplepattern0}
(Simple Intersection Pattern Theorem)

Let $L$ be a Brunnian link, let $T$ be an essential torus splitting $L$, and let $U$ be a stable disk system where the cross disks are mutually disjoint. Then after an isotopy of $T$, every disk intersects $T$ in simple intersection pattern.
\end{thm}

Here by ``simple intersection pattern", we mean that the intersection between the torus and disks has a very simple form, for details, see Def. \ref{defn:simple intersection pattern}. For the meaning of \emph{cross disk}, see Def. \ref{defn:interior and cross disks}. 

In fact, the assumptions of the two corresponding theorems in the text are much weaker than these presented here, and so the conclusions in the text are stronger, but the articulation is a little more complicated.

\subsection{Hyperbolic Brunnian links}
Our methods work to detect hyperbolicity for all of Brunnian links in literature. Especially, we may first list 6 infinite families of Brunnian links:

\begin{enumerate}
  \item deBrunner($n$), where $n > 1$, as shown in Fig. \ref{fig:debrunner(n)}. (c.f. \cite{Rolfsen})
  \item W($n$), where $n > 1, n \ne 4$, as shown in Fig. \ref{fig:W5}. (c.f. \cite{Brunn, Rolfsen})
  \item Torus($m,n$), where $m,n\in \mathbb{N}_+$, as shown in Fig. \ref{fig:torus(m,n)}. (c.f. \cite{BCS, BS})
  \item Tube($m,n$), where $m>0,n>1$, as shown in Fig. \ref{fig:tube(m,n)}.(\cite{BCS, BS, BW})
  \item Carpet($m,n,p$), where $m,n,p \in \mathbb{N}_+$, as shown in Fig. \ref{fig:carpet(m,n,p)}.(\cite{BCS, BS, BW})
  \item Lamp($n_1, n_2,...,n_{2k}$), where $k \in \mathbb{N}_+$ and each index is an odd integer indicating the number of half twists as shown in Fig. \ref{fig:lamp}.
\end{enumerate}

We point out that the second family W($n$) was constructed by H. Brunn more than 120 years ago, and W($4$) is not hyperbolic as it is a Milnor link (see Fig. 2 in \cite{BM}).

\begin{figure}[htbp]
\centering
\begin{minipage}[t]{0.48\textwidth}
\centering
\includegraphics[scale=0.3]{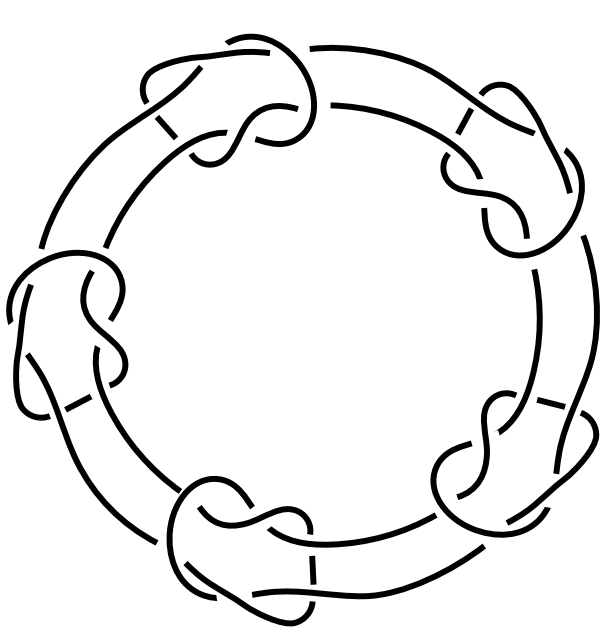}
    \caption{deBrunner(5).}
	\label{fig:debrunner(n)}
\end{minipage}
\begin{minipage}[t]{0.48\textwidth}
\centering
\includegraphics[scale=0.3]{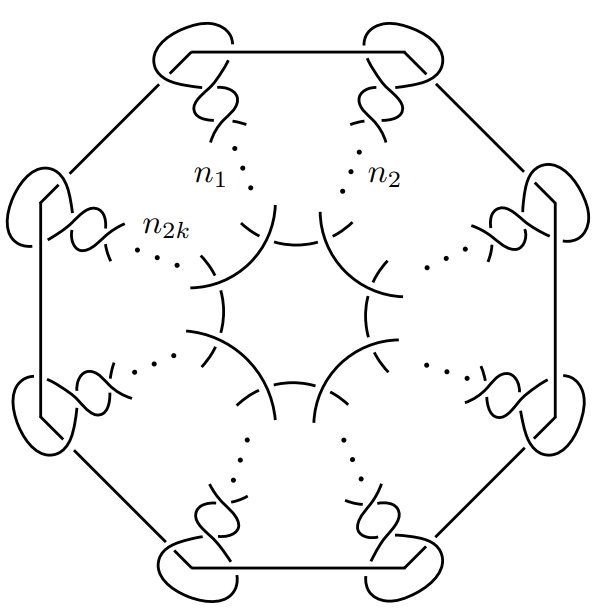}
\caption{Lamp($n_1, n_2,...,n_{2k}$).}
\label{fig:lamp}
\end{minipage}
\end{figure}

\begin{figure}[htbp]
		  \centering
    \includegraphics[width=\textwidth]{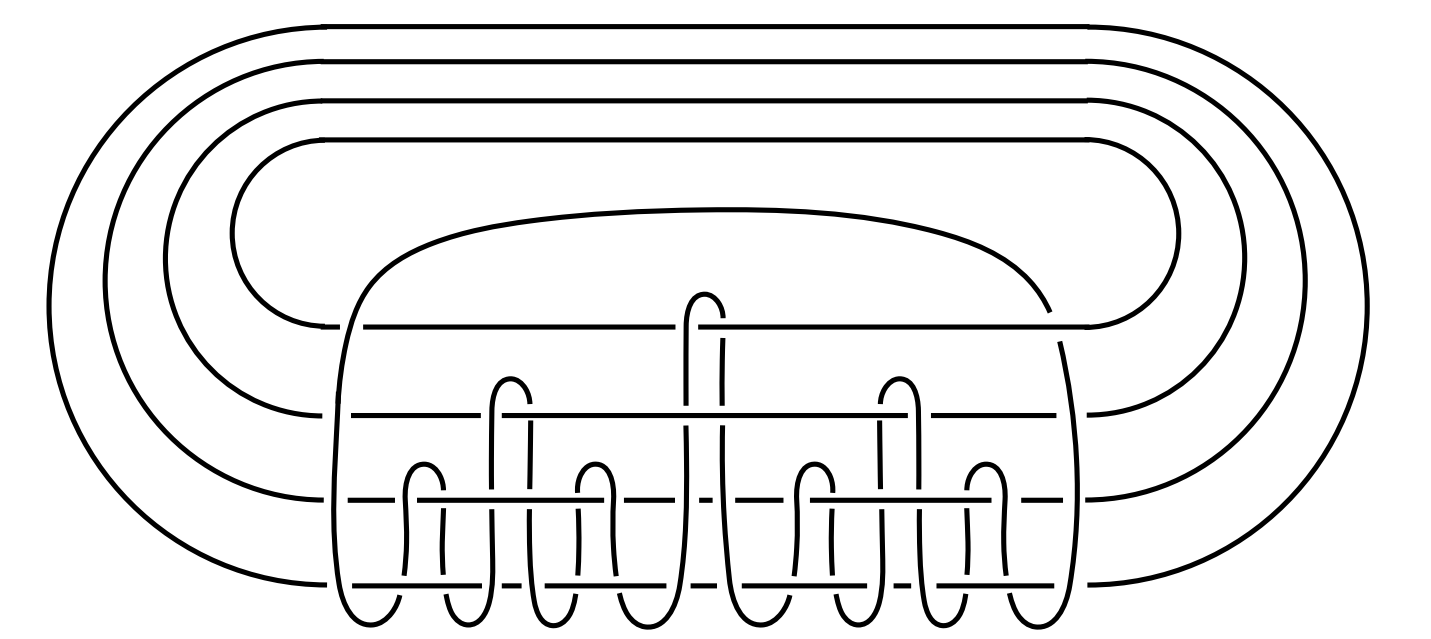}
    \caption{W(5).}
	\label{fig:W5}
\end{figure}

\begin{figure}[htbp]
		  \centering
    \includegraphics[width=\textwidth]{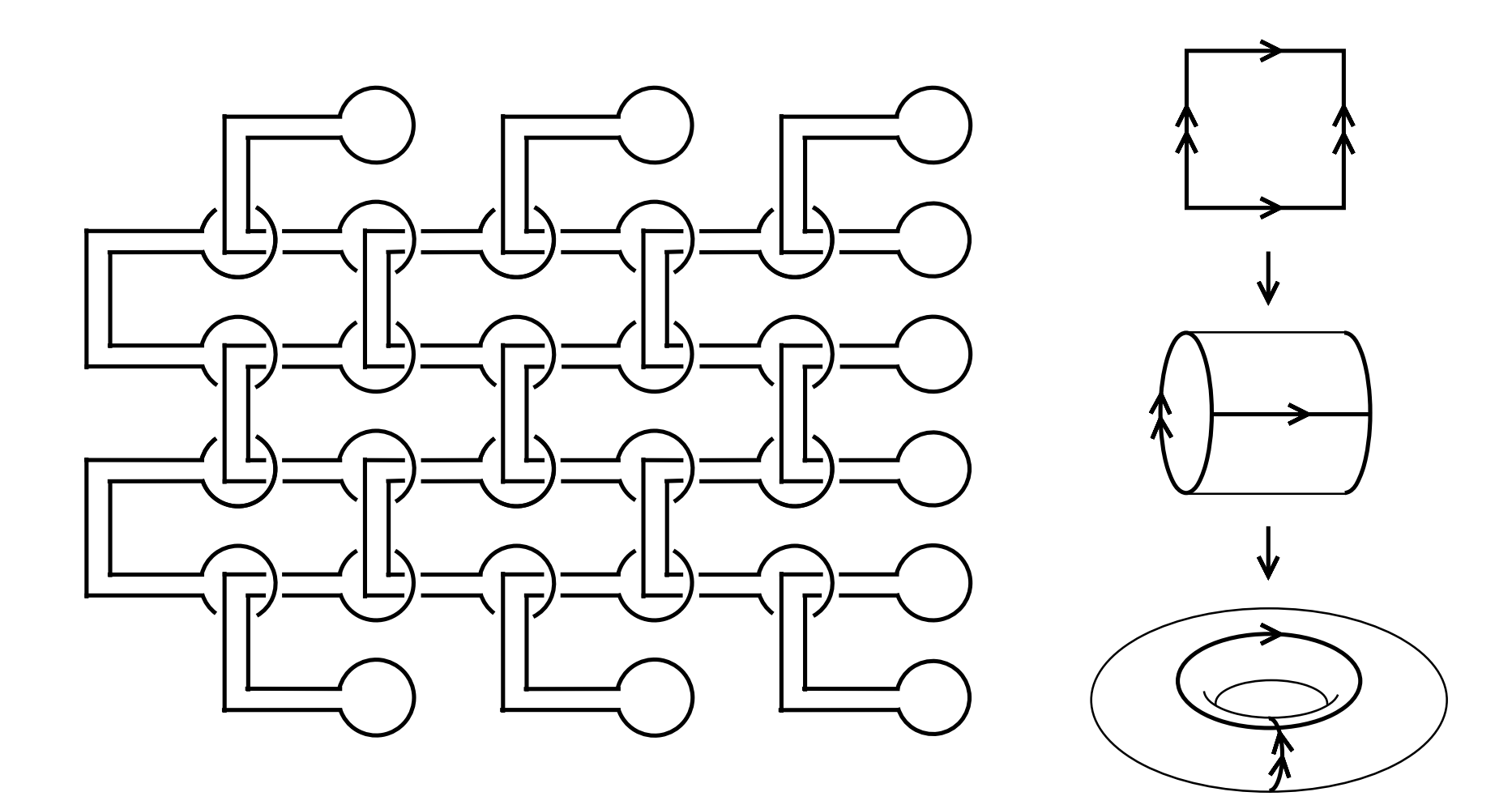}
    \caption{Torus($m,n$) has $2m$ rows and each row has $n$ components.}
	\label{fig:torus(m,n)}
\end{figure}

\begin{figure}[htbp]
		  \centering
    \includegraphics[width=\textwidth]{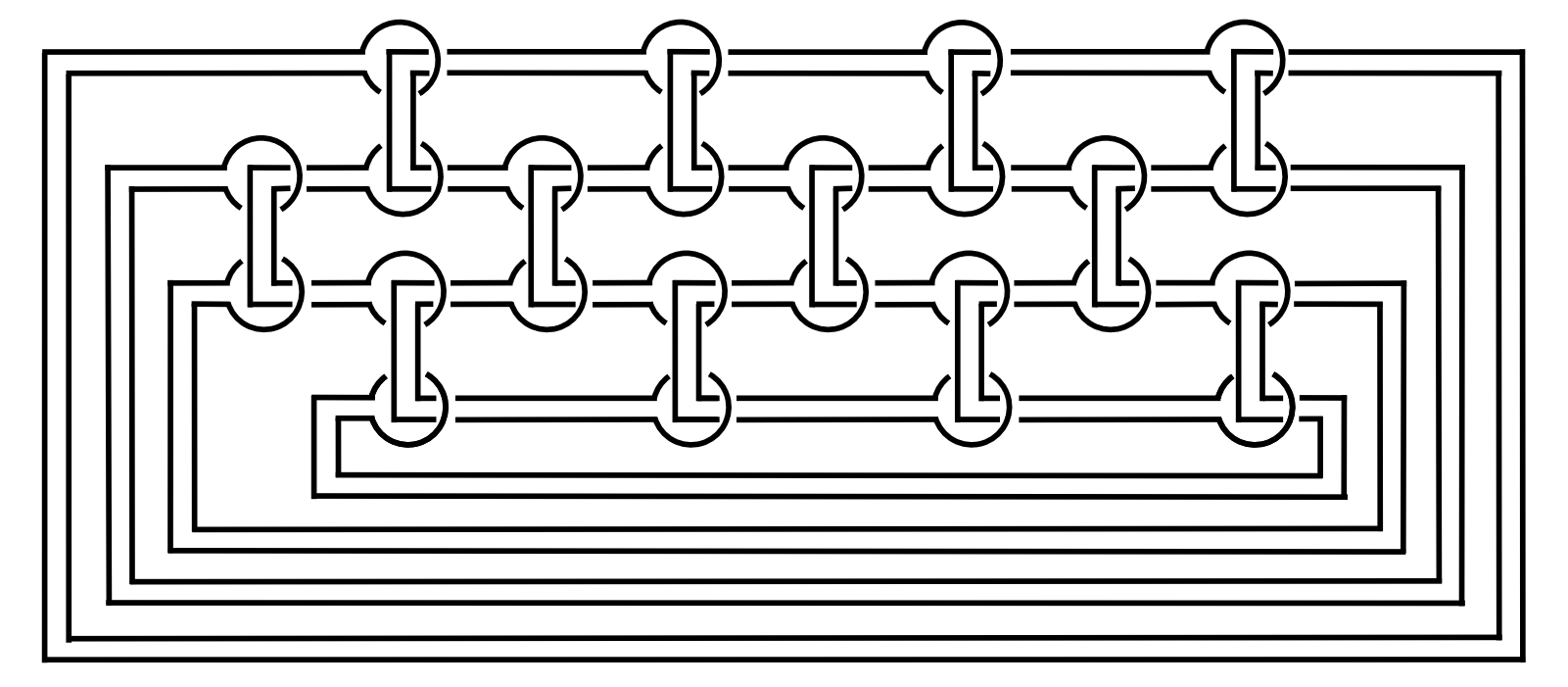}
    \caption{Tube(3,4), a tube with 3 rows and 4 columns.}
	\label{fig:tube(m,n)}
\end{figure}

\begin{figure}[htbp]
		  \centering
    \includegraphics[width=\textwidth]{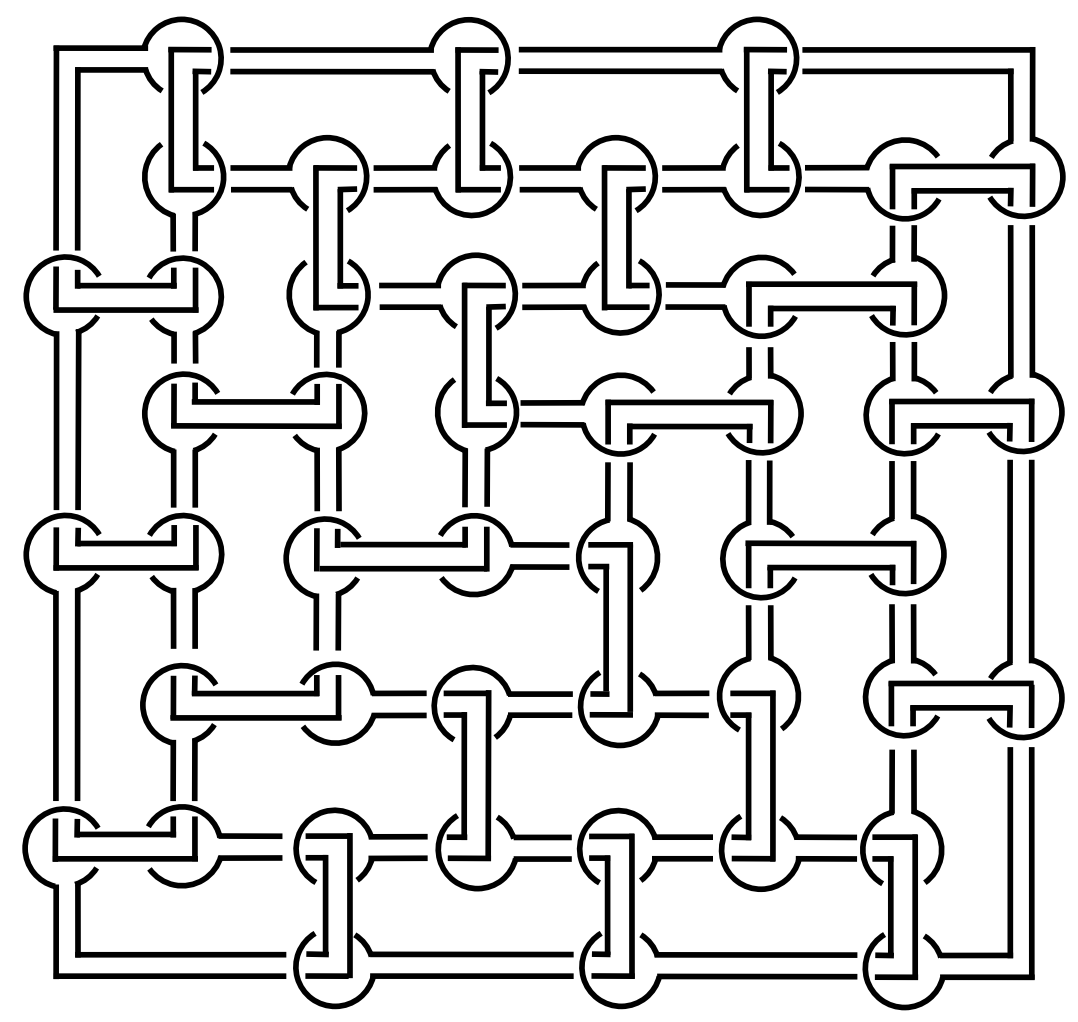}
    \caption{Carpet(1,3,4), see \cite{BW} for notations.}
	\label{fig:carpet(m,n,p)}
\end{figure}
\begin{thm}\label{thm:series}
The six families of links above are hyperbolic.
\end{thm}

The proof is given in Subsection \ref{subsect:examples}. We actually prove hyperbolicity for far more Brunnian links, some complicated families including Baas' solids (Fig. 16 in \cite{BCS}), Snakes, Cirrus and Wheels in \cite{BW}, and so on.

Comparing with using SnapPy, we see that, firstly, a program cannot show hyperbolicity for infinite families of links, and secondly, we show their hyperbolicity using flexible manual methods, thus if some details of these links are changed, it is easy to show the new links are still hyperbolic using the almost same proofs. By the theorem of W. Menasco\cite{Men}, it can be verified that alternating Brunnian links are hyperbolic. These newly discovered families are far from alternating at least seen from their diagrams. Besides, our methods provide alternative proofs for hyperbolicity of alternating Brunnian links.

In Subsection \ref{subsect: generallinks}, we generalize our criteria and framework to detect hyperbolicity for links with some unknotted components. These criteria are Theorem \ref{thm:unsplit}, \ref{thm:split} and \ref{thm:simplepattern2}. For examples, many links, not Brunnian, in Thurston's book \cite{Th}, can be fast proven hyperbolic using our methods.

\subsection{Preliminaries}\label{sect:preliminaries}
In this paper, all objects and maps are smooth. Without special explanation, we always consider links in $S^3$. All intersections are compact and transverse. Each arc is assumed to be simple. The notations $C,L,D$ (maybe with subscripts) are always used to denote an unknot, a link and an embedded disk respectively. We use $N(\cdot)$ to denote a regular neighbourhood.

For a compact set $K$ in a solid torus, $K$ is \emph{geometrically essential} in it if any meridian disk intersects $K$. If $V$ is a regular neighbourhood of a knot $K$ in $S^3$, the \emph{longitude} of $V$ is the essential curve in $\partial V$ that is null-homologous in $S^3-intV$, oriented similarly to $K$; the \emph{meridian} is the essential curve in $\partial V$  bounding a disk in $V$ and having linking number $+1$ with $K$. (c.f. \cite{RB})

\section{Credible disks and stable disks}\label{sect:crediblestable}

In this section, we introduce some notions for links with some unknotted components and our main tools, and explain these tools are always available.

\subsection{Credible disks}\label{subsect:credible}

Our basic tool is credible disks.

\begin{defn}\label{def:credible}
Let $C_i$ be a component of a link $L$ and $D_i$ be a disk bounded by $C_i$. $D_i$ is \emph{credible} if there is no circle $C$ in the interior of $D_i$ such that
\begin{itemize}
  \item[(i)]  the disk contained in $D_i$ and bounded by $C$ intersects $L$;
  \item[(ii)] there is a disk $D_C$ bounded by $C$ with $D_C \subset S^3 - L$.
\end{itemize}
\end{defn}

To simplify the task of showing a disk is credible, we give a definition used for the proposition following it.
\begin{defn}\label{def:incredible}
Let $L=\cup_{i=1}^n C_i$ be an $n$-component link, let $I$ be a subset of $\{1,...,n\}$, and let  $D_i (i\in I)$ be disjoint disks bounded by $C_i (i\in I)$. Set $U=\cup_{i \in I} D_i \cup L$. A circle $C$ in the interior of $D_i$ is \emph{incredible} for $U$ if
\begin{itemize}
  \item[(i)]  the disk contained in $D_i$ and bounded by $C$ intersects $L$;
  \item[(ii)] there is a disk $D_C$ bounded by $C$ with $int D_C \subset S^3 - U$.
\end{itemize}
\end{defn}

\begin{thm}\label{prop:credible}
Let $L=\cup_{i=1}^n C_i$ be an $n$-component link, let $I$ be a subset of $\{1,...,n\}$, and let  $D_i (i\in I)$ be disjoint disks bounded by $C_i (i\in I)$. Set $U=\cup_{i \in I} D_i \cup L$. Then $D_i$ is credible for each $i \in I$ if and only if there is no incredible circle for $U$.
\end{thm}

\begin{proof}
Since the``only if" implication is obvious, we need only prove the ``if" implication. Suppose for an $i \in I$, there is a circle $C \subset int D_i$ such that the disk contained in $D_i$ and bounded by $C$ intersects $L$ and there is a disk $D_C$ bounded by $C$ with $D_C \subset S^3 - L$. We wish to find an incredible circle for $U$.

After a perturbation near $C$, we may assume $D_C \cap \cup_{i \in I} D_i$ is a disjoint union of circles. Choose an innermost circle in $D_C$, which bounds a disk, say $D_0$, in it. Suppose $C_0= \partial D_0$ is in $D_j$ for some $j \in I$. If $C_0$ encloses some intersection points with $L$ in $D_j$, then $C_0$ is an incredible circle for $U$. Otherwise, $D_0$ caps a 3-ball with $D_j$. Replacing $D_0$ in $D_C$ by the disk contained in $D_j$ and bounded by $C_0$, a little beyond, we eliminate $C_0$. Step by step, we find an incredible circle for $U$.
\end{proof}

Noting that the complement of $U$ is smaller than the component of $L$, this theorem reduces the task of showing a disk is credible in applications. In fact, the main content of \cite{BW} is to show disks are credible, which proposes two widely applicable methods, including several criteria and a general procedure.

\subsection{Stable disks}\label{subsect:stable}
Sometimes we will use a special kind of credible disks, called stable disks. Let $C_i$ be a component of a link $L$ and $D_i$ be a disk bounded by $C_i$. Let $L_0$ consist of the components of $L$ which intersects the interior of $D_i$, and $L_J$ be a sublink of $L - C_i$ containing $L_0$.

\begin{defn}\label{def:stable}
The disk $D_i$ is \emph{stable for $L_J$} if $\sharp (D_i \cap L_J)$ is minimal among all disks bounded by $C_i$ whose interior only intersects $L_J$, and $D_i$ is \emph{stable} if it is stable for $L_0$.
\end{defn}

Clearly a stable disk is credible and the converse is not true in general. To reduce the task of showing a disk is stable, we introduce the following definition.

\begin{defn}\label{def:unstablefor}
A circle $C$ in the interior of $D_i$ is \emph{unstable for $L_J$} if $C$ bounds a disk $D_{C1}$ such that
\begin{itemize}
  \item[(i)]  $D_{C1} \cap L = D_{C1} \cap L_J$;
  \item[(ii)] $\sharp (D_{C1} \cap L) < \sharp (D_{C0} \cap L)$, where $D_{C0}$ is the disk contained in $D_i$ and bounded by $C$.
\end{itemize}
\end{defn}

Obviously that $D_i$ is stable for $L_J$ implies that there is no unstable circle for $L_J$. The following lemma shows the converse is also true.

\begin{lem}\label{lem:stabledisk}
There is no unstable circle for $L_J$ in $D_i$ if and only if $D_i$ is stable for $L_J$.
\end{lem}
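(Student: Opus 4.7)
The ``$\Leftarrow$'' direction is the implication the text just above calls obvious: given an unstable circle $C$, one replaces $D_{C0} \subset D_i$ by $D_{C1}$ (smoothing out any extraneous intersections by standard innermost-disk moves) to produce a disk with the same boundary $C_i$, interior meeting only $L_J$, and strictly fewer $L$-intersections, contradicting stability. I therefore focus on the nontrivial ``$\Rightarrow$'' direction, which I prove by contraposition.

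Suppose $D_i$ is not stable for $L_J$. By Definition \ref{def:stable} there is an embedded disk $D'$ with $\partial D' = C_i$, $\mathrm{int}(D') \cap L \subseteq L_J$, and $\sharp(D' \cap L) < \sharp(D_i \cap L)$. Choose nested tubular neighborhoods $N' \subset \mathrm{int}(N)$ of $C_i$ with $N \cap L = C_i$ and $N$ disjoint from the finitely many interior intersection points of $D_i$ with $L$. Set $\gamma := D_i \cap \partial N'$; this is a circle in $\mathrm{int}(D_i)$ parallel to $C_i$, and the annulus $A_1 := D_i \cap (N \setminus \mathrm{int}(N'))$, cobounded by $\gamma$ and $\alpha_i := D_i \cap \partial N$, is disjoint from $L$. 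Consequently the disk $D_{\gamma 0} \subset D_i$ bounded by $\gamma$ satisfies $\sharp(D_{\gamma 0} \cap L) = \sharp(D_i \cap L)$.

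To construct a competing disk bounded by $\gamma$, observe that $\alpha_i$ and $\alpha' := D' \cap \partial N$ are both longitudes of $C_i$ on the torus $\partial N$, so after a small isotopy of $D'$ supported in $N$ they may be taken disjoint and cobound an annulus $A_2 \subset \partial N$. Writing $D'' := D' \setminus \mathrm{int}(N)$, a disk with boundary $\alpha'$, I define
\[
  D_{\gamma 1} \;:=\; A_1 \cup A_2 \cup D''.
\]
The three pieces lie respectively in $N \setminus \mathrm{int}(N')$, on $\partial N$, and in $S^3 \setminus N$, and meet only along the common boundary circles $\alpha_i$ and $\alpha'$; hence $D_{\gamma 1}$ is an embedded disk with $\partial D_{\gamma 1} = \gamma$. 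Since $L \cap N = C_i$ and neither $A_1$ nor $A_2$ meets $C_i$, every intersection of $D_{\gamma 1}$ with $L$ comes from $D''$, yielding $D_{\gamma 1} \cap L \subseteq L_J$ and $\sharp(D_{\gamma 1} \cap L) = \sharp(D' \cap L)$.

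Conditions (i) and (ii) of Definition \ref{def:unstablefor} follow at once: (i) $D_{\gamma 1} \cap L \subseteq L_J$ by construction, and (ii) $\sharp(D_{\gamma 1} \cap L) = \sharp(D' \cap L) < \sharp(D_i \cap L) = \sharp(D_{\gamma 0} \cap L)$. Hence $\gamma$ is unstable for $L_J$ in $D_i$, contradicting the hypothesis and completing the contrapositive. The one mildly technical point is ensuring the union $A_1 \cup A_2 \cup D''$ is genuinely embedded, which reduces to disjoining the two parallel longitudes $\alpha_i, \alpha'$ on the torus $\partial N$; this is always possible because two isotopic simple closed curves on a torus can be placed into parallel position by an ambient isotopy.
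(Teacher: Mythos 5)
Your proof is correct, but it takes a genuinely different route from the paper's. For the nontrivial direction (no unstable circle $\Rightarrow$ stable), the paper intersects the competing disk with $D_i$, takes an innermost circle in the competing disk, and repeatedly applies the cut-and-paste surgery of Lemma \ref{lem:stabledisks}(2) (resolving the resulting double curves as in the easy case of Dehn's Lemma) to reach a contradiction. You instead exhibit an unstable circle in one step: a boundary-parallel circle $\gamma\subset \mathrm{int}\,D_i$, whose subdisk $D_{\gamma 0}$ carries all of $\sharp(D_i\cap L_J)$, is shown unstable by re-routing the better disk $D'$ near $C_i$ --- collar of $D_i$ inside the tube $N$, annulus between the two longitudes on $\partial N$, then $D'\setminus \mathrm{int}\,N$ --- to obtain an embedded disk bounded by $\gamma$ with exactly $\sharp(D'\cap L_J)$ intersection points. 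This works precisely because Definition \ref{def:unstablefor} imposes no disjointness requirement between $D_{C1}$ and $D_i$, so you never need to disentangle $D'$ from $D_i$; your argument is shorter, avoids the immersed-disk resolution, and shows in addition that the witnessing unstable circle can always be taken boundary-parallel. What the paper's heavier argument buys is a competing disk meeting $D_i$ only along $C_i$, the kind of disjointness control it reuses elsewhere (e.g., in Lemma \ref{lem:stabledisks}). One point you should make explicit: the conditions you impose on $N$ (namely $N\cap L=C_i$) do not by themselves guarantee that $D_i\cap N$ and $D'\cap N$ are collar annuli of $C_i$, since the interiors of the disks could re-enter $N$; a priori $\gamma=D_i\cap\partial N'$ could be several circles and $D''$ need not be a disk. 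This is repaired by shrinking $N$ --- a sufficiently small tube around $C_i$ meets each of the two fixed embedded disks in a collar of its boundary --- after which your splicing, the count $\sharp(D_{\gamma 1}\cap L)=\sharp(D'\cap L_J)$, and the disjoining of the two parallel longitudes on $\partial N$ go through as written (with the intersection counts throughout understood as interior intersection points, i.e.\ with $L-C_i$).
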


\begin{proof}
Suppose there is no unstable circle for $L_J$ on $D_i$. Assume for contradiction that $D'_i$ is a disk bounded by $C_i$, having less number of intersection points with $L_J$ than $D_i$. After perturbation, $D'_i$ intersects $D_i$ transversely in their interiors. Choose an innermost circle in $D'_i$. Using the same argument as in the proof of Lemma \ref{lem:stabledisks}(2), we get a contradiction.
\end{proof}

The task of showing a disk is stable can be further reduced by considering many mutually disjoint disks. Let $L=\cup_{i=1}^n C_i$ be an $n$-component link and $I \subset \{1,...,n\}$. Take disjoint disks $D_i (i\in I)$ bounded by $C_i (i\in I)$.

\begin{defn}\label{def:unstableavoiding}
For any $k \in I$, a circle $C$ in the interior of $D_k$ is \emph{unstable avoiding $\cup_{i \in I} D_i$} if $C$ bounds a disk $D_{C1}$ such that
\begin{itemize}
  \item[(i)]  $D_{C1} \cap \cup_{i \in I} D_i = C$;
  \item[(ii)] $\sharp (D_{C1} \cap L) < \sharp (D_{C0} \cap L)$, where $D_{C0}$ is the disk contained in $D_i$ and bounded by $C$.
\end{itemize}
\end{defn}

\begin{lem}\label{lem:stabledisks}
(1) Fix $k \in I$. Then there is no unstable circle in $D_k$ avoiding $\cup_{i \in I} D_i$ if and only if there is no disk $D$ bounded by $C_k$ such that
\begin{itemize}
  \item[(i)]  $D \cap \cup_{i \in I} D_i = C_k$, and
  \item[(ii)] $\sharp (D \cap (L-C_k)) < \sharp (D_k \cap (L-C_k))$.
\end{itemize}

(2)  For any $k \in I$ there is no unstable circle in $D_k$ avoiding $\cup_{i \in I} D_i$ if and only if for any $k \in I$ there is no unstable circle in $D_k$ for $L - \cup_{i \in I} C_i$.
\end{lem}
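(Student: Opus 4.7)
The plan is to prove both parts by standard cut-and-paste arguments, viewing an unstable circle and a replacement disk for $C_k$ as two packagings of essentially the same data.

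For part (1), both directions follow from a single construction read in opposite directions. For $(\Leftarrow)$, given an unstable circle $C \subset int D_k$ with witness $D_{C1}$, we form a disk $D$ bounded by $C_k$ by gluing $D_{C1}$ to the annulus $A := D_k \setminus int D_{C0}$ along $C$. Since the naive union meets $int D_k$ along $int A$, we first shrink $D_{C1}$ slightly away from $C$, lift $A$ off $D_k$ while keeping $C_k$ fixed, and connect the two pieces by a thin tube in a neighborhood of $C$. This produces an embedded $D$ with $D \cap (\cup_{i \in I} D_i) = C_k$, and a routine count gives
\[
\sharp(D \cap (L-C_k)) = \sharp(D_{C1} \cap L) + \sharp(D_k \cap (L-C_k)) - \sharp(D_{C0} \cap L) < \sharp(D_k \cap (L-C_k))
\]
by the defining inequality for $C$. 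For $(\Rightarrow)$, given such a $D$, pick $C \subset int D_k$ so close to $C_k$ that the inner sub-disk $D_{C0}$ already contains all $\sharp(D_k \cap (L-C_k))$ interior intersections of $D_k$ with $L$, and run the construction in reverse: glue $D$ to the thin collar annulus between $C$ and $C_k$, with the analogous lift-and-tube modification, to obtain the required $D_{C1}$ bounded by $C$.

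For part (2), one implication is immediate: any witness $D_{C1}$ for an unstable circle avoiding $\cup_{i \in I} D_i$ has $D_{C1} \cap (\cup_{i \in I} C_i) = \emptyset$, since each $C_i \subset D_i$ and $D_{C1}$ meets $\cup_{i \in I} D_i$ only in the interior circle $C$; hence $D_{C1}$ is also a witness for the weaker notion of unstable for $L - \cup_{i \in I} C_i$. For the converse, argue contrapositively: assume some $D_k$ contains an unstable circle $C$ for $L - \cup_{i \in I} C_i$, and among all its witnesses choose $D_{C1}$ minimizing $\sharp(D_{C1} \cap (\cup_{i \in I} D_i))$. If this intersection equals $C$ itself, then $C$ is already unstable in $D_k$ avoiding $\cup D_i$, contradicting the hypothesis. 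Otherwise, since $D_{C1}$ misses each $C_i$, the extra components are circles in the interiors of the $D_j$. Pick one, $c \subset int D_j$, innermost in $D_{C1}$, bounding a sub-disk $d' \subset D_{C1}$ with $d' \cap (\cup_{i \in I} D_i) = c$. Let $d$ be the inner disk in $D_j$ bounded by $c$. Then $d'$ is itself a candidate witness exhibiting $c$ as unstable in $D_j$ avoiding $\cup D_i$ provided $\sharp(d' \cap L) < \sharp(d \cap L)$; by the standing hypothesis this is forbidden, so $\sharp(d \cap L) \le \sharp(d' \cap L)$. Cutting $d'$ out of $D_{C1}$, pasting in $d$, and pushing slightly off $D_j$ yields a new witness for $C$ with strictly fewer intersections with $\cup D_i$, contradicting minimality.

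The main obstacle is the bookkeeping in the cut-and-paste step of part (2): one must check that after the surgery the modified disk is still embedded, still bounded by $C$, still disjoint from $\cup_{i \in I} C_i$, and has its $L$-intersection count controlled by $\sharp(d \cap L) \le \sharp(d' \cap L)$. The essential point is that the innermost sub-disk $d'$ automatically meets the hypotheses of an ``avoidance'' witness for $c$ in $D_j$, which is exactly what allows the hypothesis at $D_j$ to be invoked and closes the minimality argument.
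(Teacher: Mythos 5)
Your part (1) and the easy direction of part (2) are correct and essentially the paper's own argument (the paper is terser, dismissing the direction you call $(\Rightarrow)$ as trivial). The genuine gap is in the hard direction of part (2), at the sentence ``Cutting $d'$ out of $D_{C1}$, pasting in $d$, and pushing slightly off $D_j$ yields a new witness.'' You chose $c$ innermost in $D_{C1}$, not in $D_j$, so the subdisk $d \subset D_j$ it bounds may contain further circles of $D_{C1} \cap D_j$; along each such circle the sheet $D_{C1} \setminus d'$ crosses $D_j$ transversely, hence it also crosses the push-off of $d$. What your construction produces is therefore in general only an immersed disk with double-curve singularities, not an embedded disk, while the definition of an unstable circle requires an embedded witness. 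This is exactly the point the paper spends its second paragraph on: it observes that the singularities of the immersed disk $E_{C2}$ are disjoint double circles and resolves them by cut-and-paste surgeries (an easy case of Dehn's lemma, Fig.~\ref{fig:immersion}), checking that the resulting embedded disk has no more intersections with $L_J$ and fewer intersection circles with $\cup_{i\in I} D_i$; iterating gives the contradiction. Your closing checklist does flag embeddedness as something ``one must check,'' but no argument is supplied, and the naive push-off does not supply one.

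Note also why the difficulty cannot be dodged by the other innermost choice: if you instead pick a circle of $D_{C1} \cap \cup_{i\in I} D_i$ innermost in $D_j$, the swap is automatically embedded, but the subdisk of $D_{C1}$ you would discard need no longer avoid $\cup_{i\in I} D_i$, so the hypothesis at $D_j$ can no longer be invoked to control the intersection count. Resolving that tension is the whole content of this step, and it is what your proposal omits. Your minimal-complexity framing (minimizing the number of circles of $D_{C1} \cap \cup_{i\in I} D_i$) versus the paper's step-by-step elimination is only a cosmetic difference; once the double-curve surgery is added, your argument closes in the same way as the paper's.
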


\begin{proof}
(1). The``only if" implication is trivial. For the ``if" implication, suppose there is an unstable circle $C$ bounding $D_{C0}$ in $D_k$ and $D_{C1}$ as in Def. \ref{def:unstableavoiding}. Then the disk $(D_k - D_{C0}) \cup D_{C1}$ has less intersection points with $L-C_k$, and after perturbation, its interior is disjoint from $D_k$.

(2). Since the``if" implication is obvious, we need only prove the ``only if" implication. Set $L_J = L - \cup_{i \in I} C_i$. Suppose there is a circle $C$ in the interior of some $D_k$, bounding $D_{C0}$ in $D_k$ and bounding $D_{C1}$ as in Def. \ref{def:unstablefor}. Then $D_{C1} \cap \cup_{i \in I} D_i$ is a disjoint union of circles. Let $C_0$ be an innermost circle in $D_{C1}$, which bounds a disk $D_0$ in it and bounds a disk $D_{j0}$ in some $D_j$. Since there is no unstable circle in $D_j$ avoiding $\cup_{i \in I} D_i$, $D_{j0}$ has no more number of intersection points with $L_J$ than $D_0$. Take the immersed disk $(D_{C1} - D_0) \cup D_{j0}$ and push the $D_{j0}$ in it a little downward $D_j$ to eliminate $C_0$. We denote the obtained immersed disk by $E_{C2}$.

\begin{figure}[htbp]
		  \centering
    \includegraphics[width=0.90\textwidth]{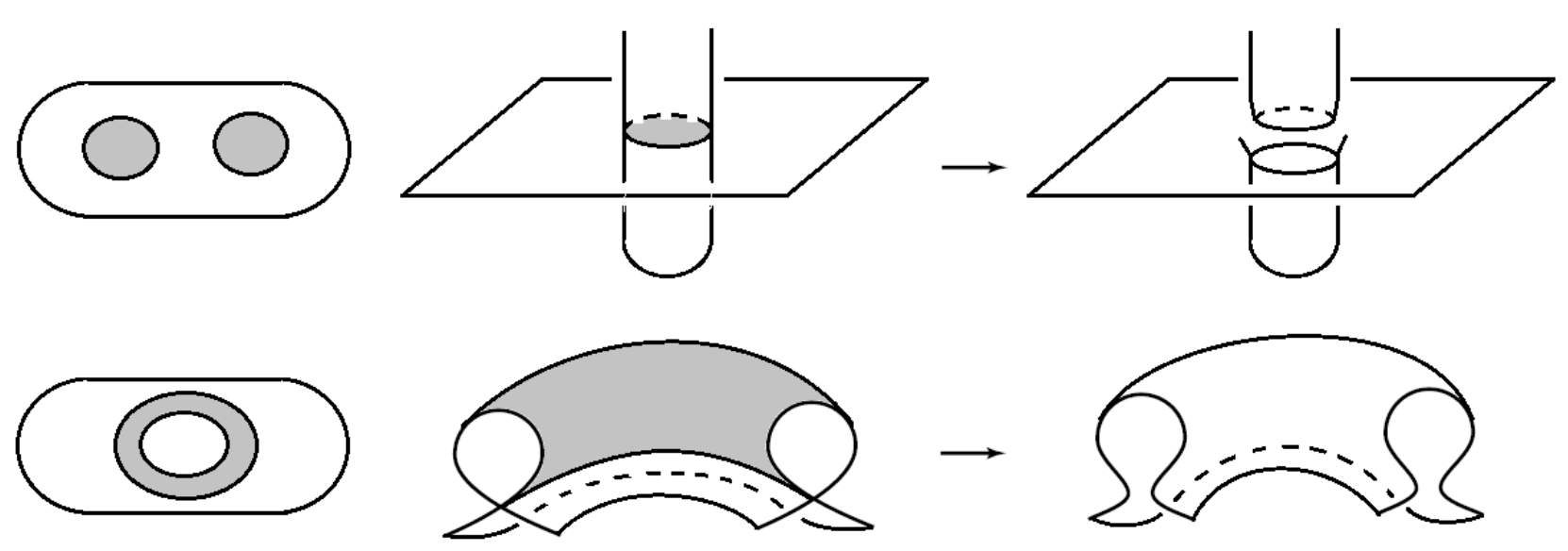}
    \caption{Two kinds of surgeries to eliminate double curves.}
	\label{fig:immersion}
\end{figure}

Generally $E_{C2}$ may not be embedded since $C_0$ may not be innermost in $D_j$. Nevertheless, its singularities are all double points, forming disjoint circles. This is an easy case of Dehn's Lemma(\cite{Pp, JM}), so we can revise it to get an embedding disk. For each self-intersection circle, change the immersion map either on two disks or in an annulus region, as illustrated in Fig. \ref{fig:immersion}, and then smooth the corners. We will get an embedding disk $D_{C2}$ having the same intersection points with $L_J$ as $E_{C2}$.

Step by step we eventually get a disk $D_{CN}$, which intersects $\cup_{i \in I} D_i$ in $C$ and has not more intersection points with $L_J$ than $D_{C1}$. Then $D_{CN}$ has less number of intersection points with $L_J$ than $D_{C0}$. Thus $C$ is unstable avoiding $\cup_{i \in I} D_i$, a contradiction.
\end{proof}

In summary, the following theorem simplifies significantly the task of showing a disk is stable.

\begin{thm}\label{prop: stable}
Let $L=\cup_{i=1}^n C_i$ be an $n$-component link, let $I$ be a subset of $\{1,...,n\}$ and let $D_i (i\in I)$ be disjoint disks bounded by $C_i (i\in I)$. Then every $D_i (i\in I)$ is stable for $L - \cup_{i \in I} C_i$ if and only if for any $i \in I$ there is no disk $D$ bounded by $C_i$ such that
\begin{itemize}
  \item[(i)]  $D \cap \cup_{k \in I} D_k = C_i$;
  \item[(ii)] $\sharp (D \cap (L-C_i)) < \sharp (D_i \cap (L-C_i))$.
\end{itemize}
\end{thm}

\begin{proof}
This follows immediately from the previous two lemmas.
\end{proof}

Applying this theorem, we present how to prove a disk is stable by two examples.

\begin{example}\label{example:Wnstable}
Consider the Milnor link in Fig. \ref{fig:g1g2g3}(1). The components $C_1, C_2, C_3$ bound
mutually disjoint oriented disks $D_1,D_2, D_3$ as Fig. \ref{fig:g1g2g3}(2), and the component $C_0$ is homotopically nontrivial in $S^3 -\cup_{i=1}^3 C_i$, whose fundamental group is freely generated by $g_1, g_2, g_3$ as Fig. \ref{fig:g1g2g3}(2). To obtain the representation of a loop in $\pi _1 (S^3 -\cup_{i=1}^3 C_i )$, just record the intersections of the loop and the disks in order with orientations. Notice that $C_0$ intersects $D_1$ and $D_2$ both in 4 points, and intersects $D_3$ in two points. With the base point $P \in C_0$, we see $C_0$ represents $[[g_1,g_2],g_3]$ in $\pi _1 ( S^3 -\cup_{i=1}^3 C_i )$. We show $D_1,D_2, D_3$ are all stable.

\begin{figure}[htbp]
		  \centering
    \includegraphics[width=0.90\textwidth]{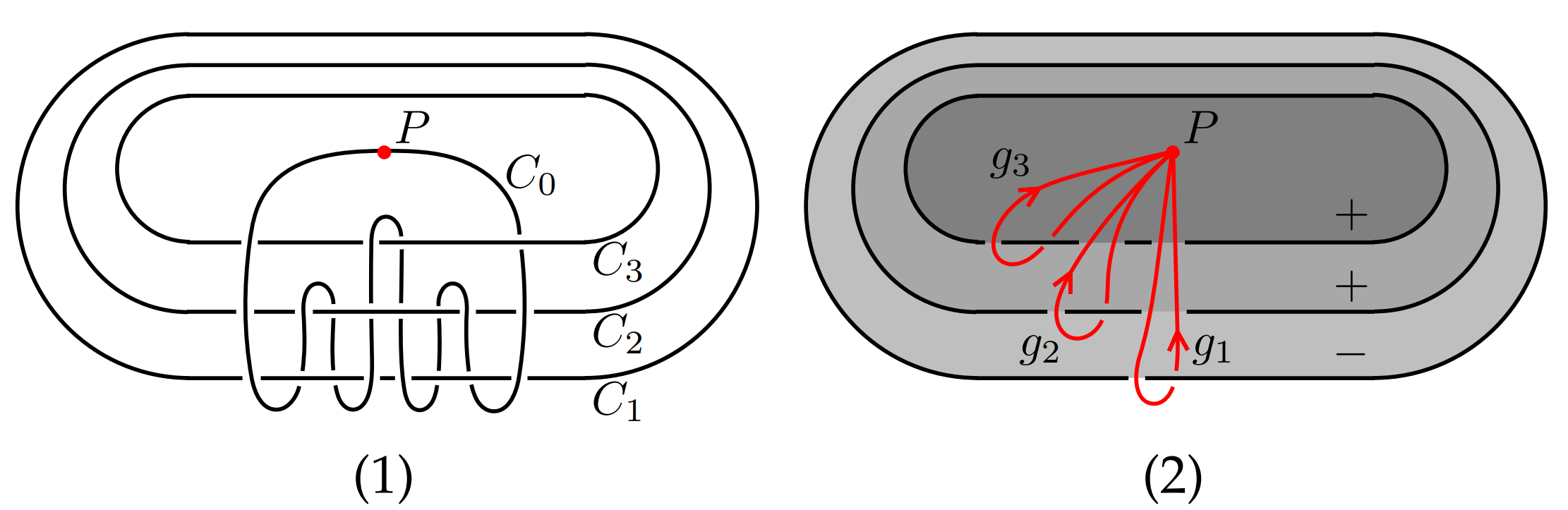}
    \caption{Stable disks for Milnor link.}
	\label{fig:g1g2g3}
\end{figure}

\begin{proof}
Since $[[g_1,g_2],g_3]$ is a reduced word, unique when generators of the free group are fixed, any isotopy of $D_1 \cup D_2 \cup D_3$ cannot give less intersection points with $C_0$ for each disk. To show $D_1$ is stable, it suffices to show there is no disk bounded by $C_1$, whose interior disjoint from $D_1 \cup D_2 \cup D_3$, has fewer intersection points with $C_0$ than $D_1$. Suppose such a disk
$D'_1$ exists, then $S_1 = D_1 \cup D'_1$ is a sphere separating $D_2$ and $D_3$. There are two cases.

{\sc Case~1:} The positive side of $D_1$ and $C_2$ belong to one side of $S_1$, which we call the positive side of $S_1$. Consider the element $C_0$ represents in $\pi _1 (S^3 -\cup_{i=1}^3 C_i )$, that is $g_1 g_2 g_1^{-1} g_2^{-1} g_3 g_2 g_1 g_2^{-1} g_1^{-1} g_3^{-1}$. We can assume the base point $P$ belongs to the negative side of $S_1$ and find an arc from $P$ representing $g_1 g_2 g_1^{-1}$, which avoids $D'_1$. Then the fourth letter $g_2^{-1}$ means we have to go to the positive side of $S_1$ to have an arc representing $g_2^{-1}$. But the third letter $g_1^{-1}$ means the arc representing the fourth letter $g_2^{-1}$ avoids $D_1$, so it must intersect $D'_1$. In other words, between the third letter and the fourth letter, $C_0$ intersects $D'_1$. In the same way, between the fourth letter $g_2^{-1} $ and the fifth letter $g_3$,   between the fifth letter $g_3$ and the sixth letter $g_2$, and between the sixth letter $g_2$ and the seventh letter $g_1$, our loop intersects $D'_1$.

In short, in the representation $g_1 g_2 g_1^{-1} g_2^{-1} g_3 g_2 g_1 g_2^{-1} g_1^{-1} g_3^{-1}$, replacing $g_2$ and $g_2^{-1}$ by $2$, $g_3$ and $g_3^{-1}$ by $3$, $g_1$ by $1^{-} 1^{+}$, and
$g_1^{-1}$ by $1^{+} 1^{-}$, we get a cyclic sequence

\begin{center}
 $1^{-} 1^{+} 2 1^{+} 1^{-} \mid 2 \mid 3 \mid 2 \mid 1^{-} 1^{+} 2 1^{+} 1^{-} 3$.
\end{center}

Since $1^{+}, 2$ belong to one side, and $1^{-}, 3$ belong to the other side, the number of
how many adjacent pairs in this cyclic sequence cross through $S_1$ gives the minimum number
of $D'_1 \cap C_0$, which is 4.

{\sc Case~2:} The positive side of $D_1$ and $C_3$ belong to one side of $S_1$. Make replacement in the representation $g_1 g_2 g_1^{-1} g_2^{-1} g_3 g_2 g_1 g_2^{-1} g_1^{-1} g_3^{-1}$ as before. Now $1^{+}, 3$ belong to one side, and $1^{-}, 2$ belong to the other side. The number of adjacent pairs in this cyclic sequence crossing through $S_1$
is 8, as

\begin{center}
$1^{-} 1^{+} \mid 2 \mid 1^{+} 1^{-} 2 \mid 3 \mid 2 1^{-} 1^{+} \mid 2 \mid 1^{+} 1^{-} \mid 3 \mid$.
\end{center}

The same method applies to $D_2, D_3$.
\end{proof}

\end{example}

\begin{figure}[htbp]
		  \centering
    \includegraphics[width=0.90\textwidth]{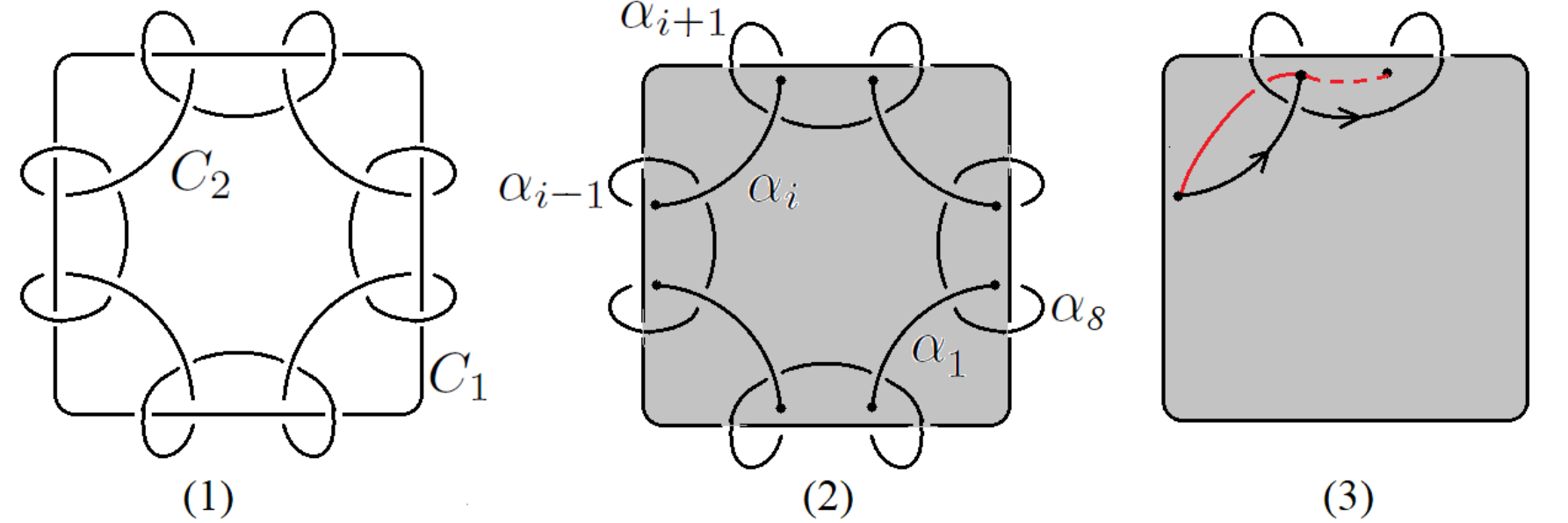}
    \caption{Lamp(1,1,1,1,1,1,1,1).}
    \label{fig:lampstable}
\end{figure}

\begin{example}\label{example:lampstable}
Consider the link in Fig. \ref{fig:lampstable}(1). We show the grey disk $D$, bounded by $C_1$ in Fig. \ref{fig:lampstable}(2) is stable. Let $D'$ be a disk bounded by $C_1$ whose interior avoids $D$. It suffices to show $\sharp (D' \cap C_2) \geq \sharp (D \cap C_2)$. The disk $D$ cuts $C_2$ into 8 arcs $\alpha_1, \alpha_2,...,\alpha_8$, successively indexed along $C_2$ as shown in Fig. \ref{fig:lampstable}(2). Notice that each $\alpha_i$ can only intersect $D'$ in an even number of points, since the two endpoints of each $\alpha_i$ are on the same side of $D$. The result will be proved by showing that for any odd $i$, either $\sharp (D' \cap \alpha _i ) \geq 2$ or $\sharp (D' \cap \alpha _{i \pm 1}) \geq 2$.

In fact, we can say $lk(\alpha _i, \alpha _{i \pm 1})=-1$ in the sense that connecting the endpoints of $\alpha_i$ by an arc on the positive side of $D$ and connecting the endpoints of $\alpha_{i \pm 1}$ by an arc on the negative side of $D$ gives two curves with linking number equal to $ -1 $. See Fig. \ref{fig:lampstable}(3). If $D'$ intersects neither $\alpha_i$ nor $\alpha_{i \pm 1}$, it would cap off $\alpha_i$ from $\alpha_{i \pm 1}$ and thus $lk(\alpha _i, \alpha _{i \pm 1})=0$, a contradiction.
\end{example}

\subsection{Disk system}\label{subsect:spanningcomplex}
We now introduce our main tool, which is a special case of the notion \emph{surface system} (c. f. \cite{C}).

\begin{defn}\label{def:spanningcomplex}
Let $L=\cup_{i=1}^n C_i$ be an $n$-component link, let $I$ be a subset of $\{1,...,n\}$, and for any $i\in I$, let $D_i$ be a credible disk bounded by $C_i$. The union $U=\cup_{i \in I} D_i \cup L$ is a \emph{disk system} for $L$ if the following regularity conditions hold:
\begin{itemize}
  \item[(Ri)]  $U$ has only generic singularities of double or triple points.
  \item[(Rii)] (No trivial intersection circle.) $\forall i,j \in I$, there is no circle component $C$ of $D_i \cap D_j$ such that the disk bounded by $C$ in $D_i$ does not intersect $L$.
  \item[(Riii)] (No trivial annulus region.) $\forall i,j \in I$, there is no pair of circle components of $D_i \cap D_j$ such that they bound annuli in both $D_i$ and $D_j$ and neither annulus intersects $L$.
\end{itemize}
\end{defn}

We point out that, given the credible disks $D_i (i\in I)$, one can always modify $\cup_{i \in I} D_i \cup L$ to meet the above regularity conditions. In fact, general differential topology guarantees condition (Ri). For condition (Rii), since $D_j$ is also credible, if such $C$ exists, then the disk bounded in $D_j$ by $C$ also does not intersect $L$. Suppose $C$ bounds $D_{Ci}$ and $D_{Cj}$ in $D_i$ and $D_j$ respectively. Interchanging $D_{Ci}$ by $D_{Cj}$ and then smoothing the corners near $C$, we can eliminate $C$. The trouble is that the new $D_i$ and $D_j$ may have self-intersection. Notice that the singularities can only be double points. By changing the immersion map and smoothing the corners, as in the second paragraph of the proof of Lemma \ref{lem:stabledisks}(2), we get embedded ones. For condition (Riii), if such pair of intersection circles exists, say $C_{01} \cup C_{02}$, which bounds $A_i$ in $D_i$ and bounds $A_j$ in $D_j$. Interchanging $A_j$ and $A_i$ and then smoothing the corners, we can eliminate $C_1 \cup C_2$. The new $D_i$ and $D_j$ may have self-intersection, but we can get embedded new $D_i$ and $D_j$ from them as above.

We conclude this section with a definition used in the statements of our theorems.

\begin{defn}\label{def:sn}
Let $C_i$ be a component of a link $L$ and $D_i$ be a credible disk bounded by $C_i$. For a natural number $N$, we say $D_i$ is \emph{s$\vee N$} if either
\begin{itemize}
  \item[(i)]  $\sharp (D_i \cap (L - C_i)) < N$; or
  \item[(ii)] $D_i$ is stable.
\end{itemize}
\end{defn}

\section{Criterion for untiedness}\label{sect:untiedness}

In this section we give a sufficient and necessary criterion to detect untiedness for Brunnian links and illustrate our method by example.

\subsection{Decision theorem and example analysis}\label{subsect:untiednessthm}

\begin{thm}\label{thm:untie}
(Untiedness Criterion)

Let $L=\cup_{i=1}^n C_i$ be an $n$-component Brunnian link, let $k$ be a number in $\{1,...,n\}$, and  let $D_i$ be a disk bounded by $C_i$ for each $i \le k$ so that $U = \bigcup_{i=1}^k D_i \cup L$ is a disk system. 

Assume that for every $i \le k$,
$$\left\{
  \begin{array}{ll}
    D_i \ is \ s \vee 7 , & if \  n=2 \  and \ lk(C_1, C_2)= \pm 1; \\
    D_i \ is \ s \vee 8, & in \ other \ cases.
  \end{array}
\right.$$

Then $L$ is untied if and only if there is no incompressible knotted torus in the complement of $U$.
\end{thm}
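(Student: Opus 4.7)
The plan is to tackle both directions by isotoping the relevant object---a knotted torus in one direction, a compressing disk in the other---so as to minimize its intersection with $\bigcup_{i=1}^k D_i$, with the ``if'' direction carrying the substantive work. For the ``only if'' direction, an incompressible knotted torus $T\subset S^3-U$ is shown to be essential in $S^3-L$: unknottedness of each $C_i$ (Brunnian hypothesis) combined with knottedness of $T$ rules out boundary-parallelism, and incompressibility in $S^3-L$ follows by taking a hypothetical compressing disk $\Delta\subset S^3-L$, isotoping it to minimize $\sum_i|\Delta\cap D_i|$, and running an innermost-circle argument in which credibility of the $D_i$ together with the spanning-complex regularity conditions (Rii), (Riii) force the count to zero, producing a compressing disk inside $S^3-U$ that contradicts the assumed incompressibility of $T$ there.

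For the ``if'' direction, assume $L$ is not untied and fix an essential knotted torus $T\subset S^3-L$. By Theorem~\ref{thm:tie}, $T$ bounds a solid torus $V\supset L$; set $W=S^3-\operatorname{int} V$, the complementary knotted solid torus. Isotope $T$ in $S^3-L$ to minimize $\sum_{i\le k}|T\cap D_i|$; I aim to show this minimum is zero, whence $T\subset S^3-U$ is the sought incompressible knotted torus (incompressibility carries over from $S^3-L$ to $S^3-U$ since $S^3-U\subset S^3-L$). Otherwise pick a circle $\gamma$ of $T\cap D_i$ innermost on $D_i$, bounding a subdisk $D_\gamma\subset D_i$. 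The three subcases are (a) $D_\gamma\subset W$, (b) $D_\gamma\subset V$ with $D_\gamma\cap L=\emptyset$, and (c) $D_\gamma\subset V$ with $D_\gamma\cap L\ne\emptyset$. In (a) and (b), $D_\gamma\subset S^3-L$ has $\partial D_\gamma\subset T$, so incompressibility of $T$ makes $\gamma$ bound a disk on $T$; the resulting sphere bounds a 3-ball across which $T$ isotopes to strictly reduce $\sum_i|T\cap D_i|$, contradicting minimality.

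The crucial subcase is (c). Credibility of $D_i$ forbids $\gamma$ from bounding any disk in $S^3-L$, and incompressibility of $T$ forbids $\gamma$ from being nullhomotopic on $T$; so $\gamma$ is essential on $T$ and, being bounded in $V$ by $D_\gamma$, is a meridian of $V$, making $D_\gamma$ a meridian disk of $V$ that pierces $L$. The $(s7)/(s8)$ hypothesis now enters in two ways. First, I show $\sharp(D_\gamma\cap L)\ge N$ (with $N=7$ or $8$) using that $L$ is geometrically essential in $V$ (a consequence of incompressibility of $T$ in $S^3-L$) and that removing any $C_j$ unlinks the rest (Brunnian), which restricts how the satellite pattern can cross a meridian disk; the dichotomy $n=2$, $lk(C_1,C_2)=1$ is the essentially Hopf pattern, which saves one intersection and drops the threshold from $8$ to $7$. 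Combined with $\sharp(D_i\cap(L-C_i))\ge\sharp(D_\gamma\cap L)\ge N$, this rules out option~(i) of $(sN)$, so $D_i$ must be stable. Second, I build a replacement meridian disk $M$ of $V$ with $\sharp(M\cap L)<\sharp(D_\gamma\cap L)$---from a cleaner position of $T$---and swap it for $D_\gamma$ inside $D_i$, desingularizing as in Lemma~\ref{lem:stabledisks}; the result is a disk bounded by $C_i$ meeting $L-C_i$ in fewer points, contradicting stability.

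The principal obstacle is the numerical step $\sharp(D_\gamma\cap L)\ge N$: extracting this exact bound from the Brunnian-in-$V$ geometry, together with verifying that a replacement meridian disk can always be constructed to undercut $\sharp(D_\gamma\cap L)$, requires a careful component-by-component inventory of how Brunnian patterns must pierce a meridian disk and a separate treatment of the Hopf-like case $n=2$, $lk(C_1,C_2)=1$ responsible for the sharper threshold $7$. Once this counting is in place, the remainder of the argument is structural.
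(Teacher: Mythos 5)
Your ``only if'' direction is essentially the paper's argument (a compressing disk for $T$ in $S^3-L$ is pushed off each $D_i$ by innermost circles, using credibility and (Rii), yielding a compressing disk in $S^3-U$), so that half is fine. The gap is in the ``if'' direction, in your subcase (c). Your plan hinges on the claim that a single innermost subdisk $D_\gamma\subset D_i$, once it is a meridian disk of $V$ meeting $L$, must satisfy $\sharp(D_\gamma\cap L)\ge 7$ or $8$. That claim is false: for an s-tie such as the Bing double of a trefoil, a meridian disk of the companion solid torus meets the link in only $4$ points (and geometric essentiality alone only forces $\ge 1$, Brunnian-ness $\ge 2$). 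The thresholds $7$ and $8$ in the hypothesis are not per-meridian-disk bounds; in the paper they arise as a count over the \emph{whole} disk $D_i$ after $T\cap D_i$ has been normalized. Concretely, the paper first chooses $T$ outermost, eliminates circles inessential in $T$, and then eliminates exterior annulus regions (this uses the outermost choice, via the connected-sum observation); after that every component of $D_i\cap T$ is a meridian of $V$, and if $D_i\not\subset V$ the region structure forces: an exterior region with one outer circle and at least $3$ inner circles, each inner circle bounding a disk meeting $L$ in $\ge 2$ points, plus an outermost region meeting $L-C_i$ (in $\ge 2$ points except in the $n=2$, $lk(C_1,C_2)=1$ case, which is exactly where $7$ replaces $8$). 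That is how option (i) of (s$N$) is ruled out --- by bounding $\sharp(D_i\cap(L-C_i))$, not $\sharp(D_\gamma\cap L)$ --- and you have no route to this count without the Step-2 normalization and the outermost hypothesis, both absent from your outline.

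The stable case of (c) is also not secured. You propose to find a meridian disk $M$ with $\sharp(M\cap L)<\sharp(D_\gamma\cap L)$ and swap it for $D_\gamma$; but if $D_\gamma$ already realizes the minimal intersection number among meridian disks of $V$, no such $M$ exists, and stability of $D_i$ is not contradicted by the mere presence of intersections. The paper's contradiction is different: among the innermost (interior, meridian-disk) regions of $D_i$ pick one, $D_{u}$, with minimal $\sharp(D_u\cap L)$, and pick a circle of $D_i\cap T$ adjacent to $\partial D_u$ along $T$ that is either not innermost or innermost with strictly more intersections; then $D_u$ tubed with the annulus in $T$ between the two circles bounds the adjacent circle and meets $L$ in fewer points, exhibiting an unstable circle in $D_i$ and contradicting stability (Lemma~\ref{lem:stabledisk}). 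This adjacency-along-$T$ construction, not a replacement of $D_\gamma$, is the missing idea. Finally, note that the theorem requires isotoping $T$ off \emph{all} of $D_1,\dots,D_k$ simultaneously; the paper's proof includes the bookkeeping (via (Rii)--(Riii) and the observations (iii), (iv)) that the isotopies used for $D_{i+1}$ keep the earlier disks inside $V$, a point your minimization-of-total-intersection framing would also need to address explicitly.
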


The proof of this theorem will be given at the end of this section.

Recall that a disk system is always available. The ``if'' implication of this theorem gives a method to show a Brunnian link is untied. The ``only if'' implication indicates this method is theoretically universal. Furthermore, if a Brunnian link is an s-tie, we can always find a companion torus by choosing a disk system $U$ satisfying the condition in Theorem \ref{thm:untie}, since an incompressible knotted torus in $S^3 - U$ is a desired companion.

Let us illustrate our method with an example.

\begin{example}\label{example:brunnlink}

\begin{figure}[htbp]
		  \centering
    \includegraphics[width=0.80\textwidth]{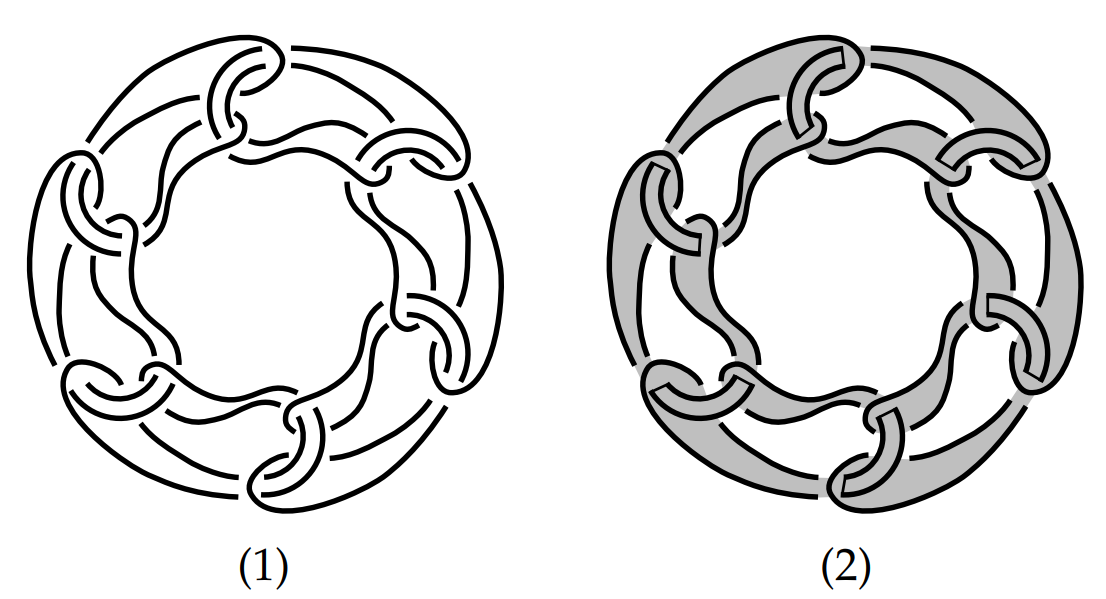}
    \caption{Brunn's chain.}
    \label{fig:brunnlink}
\end{figure}

Consider Brunn's chain in Fig. \ref{fig:brunnlink}(1). Take the disks as shown in Fig. \ref{fig:brunnlink}(2), which are all credible as demonstrated in \cite{BW}. The union of these disks forms a disk system, say $U$. Since $S^3 - int N(U)$ is a handlebody, where $N(U)$ is a regular neighborhood of $U$, by Theorem \ref{thm:untie}, Brunn's chains are untied.
\end{example}

From this example, we can see that this method is intuitive and efficient. Our method works fast to detect untiedness for all Brunnian links before the appearance of \cite{BW}. To our best knowledge, all the Brunnian links in literature before the s-tie operation was proposed by \cite{BM} are untied.

For the very complicated Brunnian links constructed in \cite{BW}, Snakes, Wheels and Cirrus are untied. We show Cirrus (Fig. \ref{fig:cirrus}) is untied in Appendix \ref{subsect:appendix2}, and for the other two, the processes are similar but simpler. For the rest two extremely complicated links in \cite{BW}, Fountains and Jade-pendant, our idea is still similar, but as the process is more complicated, we will show that they are untied in a future note.

\subsection{Proof of Theorem \ref{thm:untie}}\label{subsect:proofuntiedness}
\qquad

\textbf{The ``if'' implication:}

Suppose that $L$ is an s-tie. Let $T$ be a knotted essential torus in $S^3 - L$, bounding a solid torus $V$ in $S^3$. By Theorem \ref{thm:tie}, we know $V$ contains $L$. We may assume $T$ is an \emph{outermost} essential torus. More formally, there is no essential torus $T_0$ in $S^3 - L$ so that

\begin{itemize}
  \item[(0i)] $T_0$ bounds a solid torus containing $V$,
  \item[(0ii)] $T_0$ is not parallel to $T$.
\end{itemize}

For any $i \le k$, the intersection $D_i \cap T$ is a union of circles. So $\cup_{i=1}^k D_i \cap T$ forms a graph in $T$. We wish to make $D_i$ into $V$ one by one by an isotopy of $T$. First consider $D_1$. We do this in three steps.

\textbf{Step 1.} Eliminate inessential circles in $T$.

Let $C$ be a circle component of $D_1 \cap T$, bounding a disk $D_{CT}$ in $T$ and a disk $D_{C1}$ in $D_1$. Since $D_1$ is credible, $D_{C1}$ does not intersect $L$. Thus we have the following two observations:

\begin{itemize}
  \item[(i)] Any circle component of $D_1 \cap T$ contained in $D_{C1}$ is inessential in $T$. This is because $T$ is essential.
  \item[(ii)] For any $i > 1$, there is no circle component of $D_1 \cap D_i$ contained in $D_{C1}$. This is by regularity condition (Rii).
\end{itemize}

By (i), we may assume $C$ is innermost in $D_1$ as a circle component of $D_1 \cap T$. Then $D_{C1} \cup D_{CT}$ bounds a 3-ball, say $B$. Pushing $D_{CT}$ across $B$ a little beyond $D_{C1}$, we eliminate $C$. Do it inductively. After finitely many steps, $D_1 \cap T$ contains no inessential circles in $T$.

To guarantee the validity of making $D_i (i>1)$ into $V$ later, we will need the following two more observations:

\begin{itemize}
  \item[(iii)] The singular part of $U$ contained in $D_{C1}$ consists of proper arcs. This follows from that $D_{C1}$ does not intersect $L$ and (ii).
  \item[(iv)] For any $i > 1$, $D_i \cap B$ is a proper surface, and each boundary component of it is either contained in $D_{CT}$ or is an alternating sequence of properly embedded arcs in $D_{CT}$ and in $D_{C1}$. This follows from (iii).
\end{itemize}

See Fig. \ref{fig:push} for an example.

\begin{figure}[htbp]
		  \centering
    \includegraphics[width=0.80\textwidth]{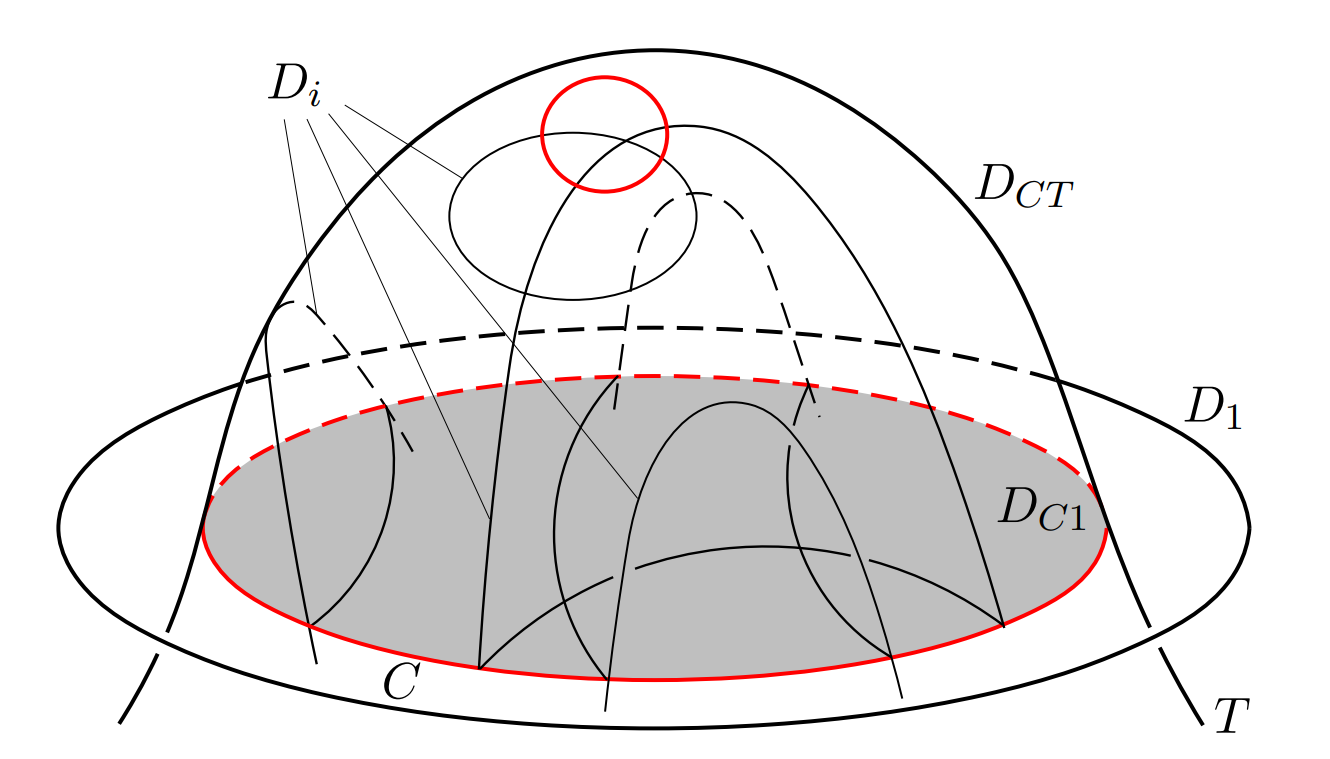}
    \caption{The red circles in $D_{CT}$ are intersections with $D_1$. The black graph is intersection with $D_i (i>1)$.}
	\label{fig:push}
\end{figure}

\textbf{Step 2.} Eliminate exterior annulus regions in $D_1$.

Consider a component of $D_1 \cap T$ innermost in $D_1$. Since it bounds a disk contained in $D_1$, it must be the meridian of $V$. Thus $D_1 \cap T$ is a union of meridians of $V$, cutting $D_1$ into regions alternatively in $intV$ and in $S^3 - V$. We call a component of $D_1 -T$ in $intV$ an \emph{interior region} in $D_1$, and call a component of $D_1 -T$ in $S^3 - V$ an \emph{exterior region} in $D_1$. Clearly every exterior region does not intersect $L$.

We claim that if annulus $A$ is an exterior region in $D_1$, then $A$ is $\partial$-parallel to $T$. In fact, if otherwise, then $\partial A$ would cut $T$ into two annuli $A_1$ and $A_2$ such that both $A \cup A_1$ and $A \cup A_2$ bound knot complements in $S^3 - intV$. This implies that the core of $V$ is a connected sum of two knots, contradicting that $T$ is outermost.

So we can isotope $T$ to make $A$ into $V$. Step by step, we can eliminate all the exterior annulus regions in $D_1$.

\textbf{Step 3.} Verify $D_1 \subset V$.

Consider $D_1$ cut by $D_1 \cap T$. We have the following three observations on the regions in $D_1$:

\begin{itemize}
  \item[(v)] The outermost region and all of the innermost regions are interior, in view of $C_1 \subset V$ and that every intersection circle is a meridian of $V$.
  \item[(vi)] If the outermost region is an annulus, it must intersect $L -C_1$. In fact, if not, $C_1$ would be parallel to a meridian of $T$. However, by Brunnian property and Theorem 1.0.5(1) in \cite{BM}, $L - C_1$ is trivial in $V$. Then $L$ would also be trivial in $V$, a contradiction.
  \item[(vii)] For any exterior region $\Omega$, $\partial \Omega$ should be one outer circle enclosing odd number($\ge 3$) of inner circles in $D_1$, and the disk bounded by each inner circle intersects $L$ in at least $2$ points. In fact, since $\partial \Omega$ is null-homological in $S^3 - intV$, $\partial \Omega$ has an even number of components, and by Step 2, it can not have only 2 components. If a disk bounded by an inner circle intersects $L$ in only one point, a component of $ L $ would be geometrically essential in $ V $, contradicting Theorem 1.0.5(1) in \cite{BM}.
\end{itemize}

We now show by cases that $D_1 \subset V$.

{\sc Case~1:} $D_1$ is stable. Suppose $D_1$ has exterior regions. Then the components of $D_1 \cap T$ can not be all innermost in $D_1$. By (v), each innermost region in $D_1$ is a meridian disk of $V$. There exist such a disk region, say $D_{u1}$, and a component of $D_1 \cap T$ adjacent to $\partial D_{u1}$ in $T$, say $C_{u+1, 1}$, such that

\begin{itemize}
  \item[(1i)] $\sharp (D_{u1} \cap L )$ is minimal among all the innermost regions,
  \item[(1ii)] $C_{u+1, 1}$ is either not innermost in $D_1$, or innermost but not achieving minimal intersection points with $L$.
\end{itemize}

Let $A_T$ be the annulus in $T$ between $C_{u+1, 1}$ and $\partial D_{u1}$. Then by (vii), the disk $D_{u1} \cup A_T$ has less intersection points with $L$ than the disk contained in $D_1$ and bounded by $C_{u+1, 1}$. So $C_{u+1, 1}$ is an unstable circle on $D_1$, a contradiction.

{\sc Case~2:} $\sharp (D_1 \cap L )<7$, if $n=2$ and $lk(C_1, C_2)= \pm 1$; $\sharp (D_1 \cap L )<8$, in other cases. Suppose $D_1$ has exterior regions. By (vi) and (vii), $D_1$ intersects $L - C_1$ in more than 6 points. If the outermost region intersects another component, say $C_2$, in only one point, then since the linking number of $C_2$ and any meridian circle of $ V $ is $ 0 $ by Theorem 1.0.5(1) in \cite{BM}, we have $lk(C_1, C_2)= \pm 1$ and thus $n=2$. As shown in Fig \ref{fig:78points}(1), we have $\sharp (D_1 \cap L ) \geq 7$. Otherwise, $\sharp (D_1 \cap L ) \geq 8$, as shown in Fig. \ref{fig:78points}(2). So under this condition, $D_1 \subset V$.

\begin{figure}[htbp]
		  \centering
    \includegraphics[width=0.80\textwidth]{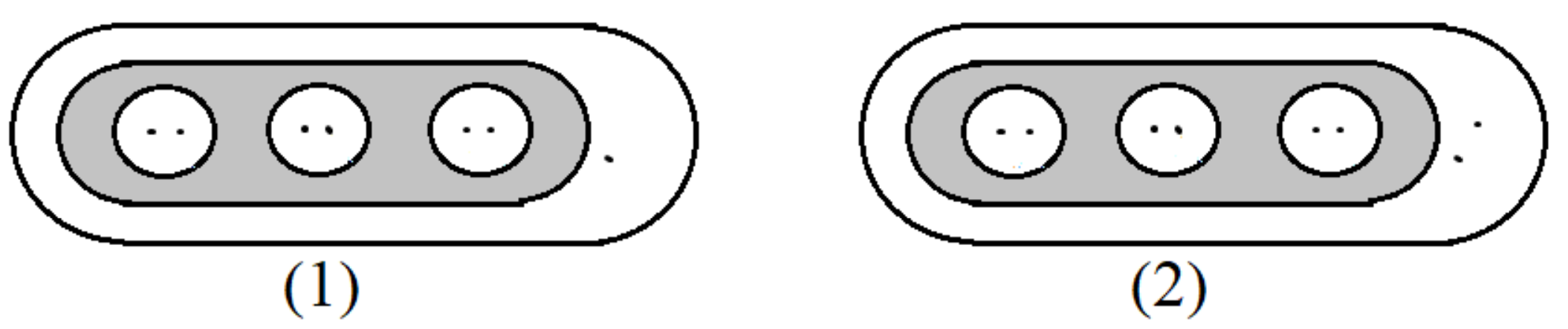}
    \caption{}
	\label{fig:78points}
\end{figure}

Now we follow the three steps above to make $D_2$ into $V$ by an isotopy of $T$. The key point is that this isotopy of $T$ will keep $D_1 \cap T = \emptyset$.

Consider $D_2 \cap T$. For Step 1, by the same token, choose a circle $C$ innermost in $D_2$, which bounds a disk $D_{CT}$ in $T$ and a disk $D_{C2}$ in $D_2$, and then push $D_{CT}$ across the 3-ball cobounded with $D_{C2}$ to eliminate $C$. We have two similar observations for $D_2$ as clauses (iii) and (iv) for $D_1$. Notice that $D_{CT}$ and $D_1$ do not intersect. Thus $D_{C2}$ also does not intersect $D_1$. Therefore the isotopy of $D_{CT}$, replacing $D_{CT}$ by $D_{C2}$, keeps $D_1 \cap T = \emptyset$. So we can eliminate all the inessential circles in $T$ in finitely many steps. For Step 2, we can eliminate exterior annulus regions in $D_2$ just as before because $D_1$ is already contained in $V$. For Step 3, the same argument as demonstrated before shows that $D_2 \subset V$.

Performing the same procedure for $D_i$ for $i$ from $2$ to $k$, we have $U \subset V$. Since $T$ is incompressible in $S^3 - L$, clearly $T$ is incompressible in $S^3 - U$.

\textbf{The ``only if'' implication:}

Suppose $T$ is an incompressible torus in the complement of $U$. Let $V$ be the solid torus bounded by $T$ containing $U$. We prove by negation that $L$ is an s-tie. If $L$ is untied, then $T$ is compressible in the complement of $L$ and thus there is a meridian disk $D$ of $V$ not intersecting $L$. Notice that any meridian disk of $V$ intersects $U$. It follows that for any $i \in I$, $D \cap D_i$ is a disjoint union of circles. The intersection $\bigcup_{i \in I} D_i \cap D$ forms a graph.

Let $C$ be a component of $D_1 \cap D$ innermost in $D_1$, bounding $D_{C1}$ in $D_1$ and bounding $D_C$ in $D$. Since $D_1$ is credible, $D_{C1}$ does not intersect $L$. By (Rii), the singular part of $U$ contained in $D_{C1}$ consists of proper arcs. Replacing $D_C$ in $D$ by $D_{C1}$ and pushing it a little beyond $D_{C1}$, we get a new disk $D$ with less intersection components with $D_1$. After finitely many steps, $D$ is disjoint from $D_1$.

A careful examination shows that the same procedure for $D_2$ keeps $D_1 \cap D = \emptyset$. Step by step for each $D_i$, we eventually get a meridian disk of $V$ not intersecting $U$, a contradiction.

\section{Simple intersection pattern theorem}\label{sect:simpleintersection}

To show a Brunnian link is s-prime is to prove no essential torus splits it into two sublinks. We divide our strategy in two steps. In this section, as the first step, we give a general intermediate result, stating that a disk system satisfying certain conditions can intersect a splitting torus in a simple form. In the next section, as the second step, we give subcriteria to negate the existence of such torus.

\subsection{Simple intersection pattern}\label{subsect:simpleintersectionpattern}

We first define several disks with respect to a partition of a link.

\begin{defn}\label{defn:interior and cross disks}
Let $L=\cup_{i=1}^n C_i$ be an $n$-component link. Let $L_I= \cup_{i \in I} C_i$ and $L_J= \cup_{j \in J} C_i$ be proper sublinks of $L$ with $I \sqcup J = \{1,...,n\}$. For any $i \in I$ and $j \in J$, the credible disks defined as follows are called \emph{$L_I$-interior disk}, \emph{$L_J$-interior disk}, \emph{exterior cross disk} and \emph{free cross disk} respectively, denoted $D_i ^I$, $D_j ^J$, $D_i ^E$ and $D_i ^F$ respectively.

\begin{itemize}
  \item[$D_i ^I$]:  bounded by $C_i$ such that $D_i ^I \cap L_J = \emptyset$;
  \item[$D_j ^J$]:  bounded by $C_j$ such that $D_j ^J \cap L_I = \emptyset$;
  \item[$D_i ^E$]:  bounded by $C_i$ such that $D_i ^E \cap (L_I-C_i) = \emptyset$ and $D_i ^E \cap L_J \neq \emptyset$;
  \item[$D_i ^F$]:  bounded by $C_i$ such that $D_i ^F \cap (L_I-C_i) \neq \emptyset$
and $D_i ^F \cap L_J \neq \emptyset$.
\end{itemize}

$L_I$-interior disk and $L_J$-interior disk are called \emph{interior disks}, and exterior cross disk and free cross disk are called \emph{cross disks}.
\end{defn}

Suppose $T$ is a torus in the complement of $L$ splitting $S^3$ into two solid tori $V_I$ and $V_J$, such that $L \cap V_I = L_I$ and $L \cap V_J = L_J$. Consider the disks defined above and their intersections with $T$.

\begin{defn}\label{defn:simple intersection pattern}
An interior disk intersects $T$ in \emph{simple intersection pattern} if it does not intersect $T$, and a cross disk intersects $T$ in \emph{simple intersection pattern} if each intersection circle is innermost in the disk and is a meridian of $V_J$.
\end{defn}

\begin{figure}[htbp]
		  \centering
    \includegraphics[width=\textwidth]{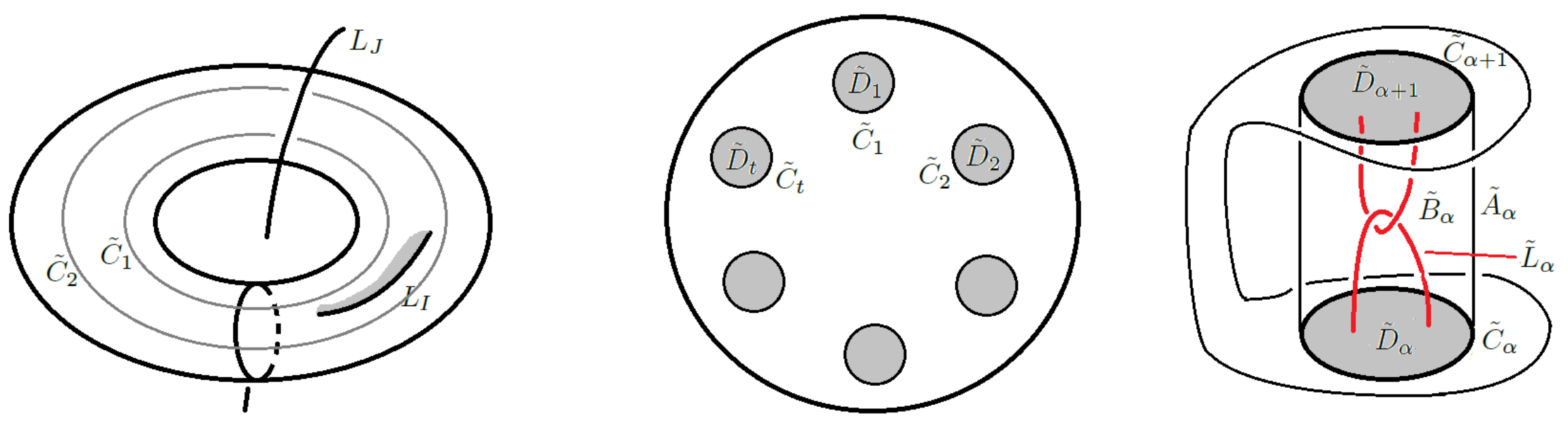}
    \caption{Simple intersection pattern for cross disk in Def. \ref{defn:simple intersection pattern}.}
    \label{fig:simpleintersectionpattern}
\end{figure}

\begin{rem}\label{rem:crossdisk}
Let $\beta _I$ and $\beta _J$ be the cores of $V_I$ and $V_J$ respectively. Consider a cross disk $D_i$ intersecting $T$ in simple intersection pattern, such as Fig. \ref{fig:simpleintersectionpattern}(2).

(1) Recall that cross disks are credible. It is easy to see that each inner disk contained in it (the gray subdisks in Fig. \ref{fig:simpleintersectionpattern}(2)) is a credible disk for $\beta _I \cup L_J$, and the outer region in it (the white region in Fig. \ref{fig:simpleintersectionpattern}(2)) corresponds to a credible disk for $L_I \cup \beta _J$.

(2) If cross disk $D_i$ is stable, consider $\beta _I \cup L_J$. It can be verified that each inner disk contained in $D_i$ is a stable disk bounded by $\beta _I$, and thus all the inner disks have the same number of intersection points with $L_J$.
\end{rem}

Now we turn to Brunnian links.

\begin{prop}\label{prop:simplepattern}
Let $L=\cup_{i=1}^n C_i$ be an $n$-component Brunnian link, split by a torus into two proper sublinks $L_I$ and $L_J$.

\begin{itemize}
  \item[(1)] For any exterior cross disk $D_i ^E$, we have $\sharp (D_i ^E \cap (L - C_i)) \ge 4$.
  \item[(2)] For any free cross disk $D_i ^F$, we have $\sharp (D_i ^F \cap (L - C_i)) \ge 6$.
\end{itemize}
\end{prop}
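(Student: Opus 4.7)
The plan is to combine three ingredients: since $L$ is Brunnian with $n\ge 3$ all pairwise linking numbers vanish, so for every disk spanning $C_i$ the number of intersection points with each other component is even; $L$ is non-split as a non-trivial Brunnian link; and credibility of a cross disk forbids any simple closed curve on it that both encloses intersection points and bounds a disk in $S^3-L$.

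For part (1), the exterior condition $D_i^E\cap(L_I-C_i)=\emptyset$ implies $D_i^E\cap(L-C_i)=D_i^E\cap L_J$. The parity argument together with $D_i^E\cap L_J\ne\emptyset$ gives $\sharp(D_i^E\cap(L-C_i))\ge 2$, and I wish to upgrade this to $\ge 4$. I would argue by contradiction: if the intersection has exactly two points $p,q$, they must lie on a single $C_j\in L_J$ with opposite signs. Enclosing them by an innermost simple closed curve $\gamma$ in the interior of $D_i^E$, I would construct an embedded disk $D_\gamma\subset S^3-L$ with $\partial D_\gamma=\gamma$, contradicting the credibility of $D_i^E$. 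The construction exploits that $L-C_i$ is an unlink, so a Seifert disk $E_j$ for $C_j$ disjoint from $L-C_i-C_j$ exists; combining $E_j$ with the sub-disk of $D_i^E$ cut off by $\gamma$ through a finger move along a suitable arc of $C_j$, and then resolving any double curves via the innermost-circle and Dehn-lemma surgeries used in Lemma~\ref{lem:stabledisks} and illustrated in Figure~\ref{fig:immersion}, yields $D_\gamma$.

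For part (2), $D_i^F$ meets both $L_I-C_i$ and $L_J$ non-trivially, so by parity $\sharp(D_i^F\cap(L-C_i))\ge 4$ is automatic. To rule out equality, suppose $\sharp=4$: two points $\{p_1,p_2\}\subset C_k\subset L_I-C_i$ and two points $\{q_1,q_2\}\subset C_j\subset L_J$, each pair of opposite signs. Among the simple closed curves on $D_i^F$ enclosing only one of these pairs, there is an innermost one whose sub-disk on $D_i^F$ contains no points of the other pair; applying the $D_\gamma$-construction of part (1) to this pair violates credibility of $D_i^F$.

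The main obstacle, and the technical heart of the argument, is making the cut-and-paste producing $D_\gamma$ honestly embedded in $S^3-L$ and not merely in $S^3-(L-C_j)$. Controlling the intersections of $E_j$ with $D_i^E$ after the finger move, and clearing the resulting double curves without introducing new intersections with $L$, follows the templates of Proposition~\ref{prop:credible} and Lemma~\ref{lem:stabledisks}, but each step must be carried out carefully so that the final $D_\gamma$ avoids every component of $L$ and not only $C_j$.
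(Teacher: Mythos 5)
There is a genuine gap, and it is not the technical one you flag at the end but a structural one: you have dropped the hypothesis under which the proposition actually lives. Although the statement as printed does not mention it, the proposition is proved (and is only ever applied) under the standing assumption that an essential torus $T$ splitting $L_I\subset V_I$ from $L_J\subset V_J$ exists: the paper's proof begins by taking such a $T$, invokes Theorem \ref{thm:simplepattern} to isotope $T$ so that the cross disk meets it in simple intersection pattern, and then counts intersection points region by region using Lemma \ref{lem: hopf} (each innermost region is a meridian disk of $V_J$, hence meets $L_J$ in at least $2$ points, and there are at least two such meridians; for free cross disks the outer region must additionally meet $L_I-C_i$ in a positive even number of points). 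In Section \ref{sect:sprime} the proposition is used contrapositively: exhibiting a credible cross disk with fewer than $4$ (resp.\ $6$) intersection points rules out the splitting torus. The unconditional statement you set out to prove from credibility and Brunnian-ness alone is in fact false. For (1), the Whitehead link with $I=\{1\}$, $J=\{2\}$ has a credible exterior cross disk (the standard clasp disk) meeting $C_2$ in exactly $2$ points; for (2), the Borromean rings with $I=\{1,2\}$, $J=\{3\}$ have a credible free cross disk (the standard flat disk) meeting $L-C_1$ in exactly $4$ points. Both are compatible with the proposition only because those partitions admit no essential splitting torus --- which is precisely the content the proposition is designed to extract.

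Because the torus-free claim is false, your key construction cannot be completed, and it breaks exactly at the point you identify as "the main obstacle": the Seifert disk $E_j$ for $C_j$ in the unlink $L-C_i$ is only guaranteed disjoint from $L-C_i-C_j$, and in general it must intersect $C_i$ (if it could be made disjoint from $C_i$ as well, $L$ would be split along the relevant pieces, contradicting non-splitness of a Brunnian link); those intersections with $C_i$ cannot be cleared by the Figure \ref{fig:immersion}-type surgeries, which only remove double curves between surfaces and never intersections with $L$. So no embedded $D_\gamma\subset S^3-L$ exists in the situations above, and no credibility contradiction arises. A further minor point: your parity step "all pairwise linking numbers vanish" uses $n\ge 3$, while the proposition also covers $2$-component Brunnian links. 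The correct route is the paper's: assume the splitting torus, normalize the intersection with $T$ via Theorem \ref{thm:simplepattern}, and count using Lemma \ref{lem: hopf} together with linking-number parity in $V_I$ and $V_J$.
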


\begin{rem}\label{rem:equal}
If $\sharp (D_i ^E \cap (L - C_i)) = 4$, then $D_i ^E$ is stable for $L_J$.
\end{rem}

The proofs of this proposition and the following main theorem will be given at the end of this section.

\begin{thm}\label{thm:simplepattern}
(Simple Intersection Pattern Theorem)

Let $L=\cup_{i=1}^n C_i$ be an $n$-component Brunnian link. Let $T$ be an essential torus in the complement of $L$, splitting $S^3$ into two solid tori $V_I$ and $V_J$ with $L \cap V_I = L_I$ and $L \cap V_J = L_J$, where $L_I= \cup_{i \in I} C_i$ and $L_J= \cup_{j \in J} C_j$. 

Let $Q, R, S$ be mutually disjoint subsets of $I$ and $J_0$ be a subset of $J$, and let $D_q ^I$($q \in Q$), $D_j ^J$($j \in J_0$), $D_r ^E$($r \in R$) and $D_s ^F$($s \in S$) be $L_I$-interior disks, $L_J$-interior disks, exterior cross disks, and free cross disks respectively.

Assume that the cross disks are mutually disjoint, assume that
$$\left\{
  \begin{array}{ll}
    D_q ^I \ is \ s \vee 8, & \forall q \in Q; \\
    D_j ^J \ is \ s \vee 8, & \forall j \in J_0; \\
    D_r ^E \ is \ s \vee 10, & \forall r \in R; \\
    D_s ^F \ is \ stable, \ or \ contains \ a \ longitude \ of \ V_I \ with \  \sharp (D_s ^F \cap (L - C_s)) < 8, & \forall s \in S,
  \end{array}
\right.$$
and assume that the union of these disks and $L$ forms a disk system. 

Then after an isotopy of $T$ in $ S^3 - L $, every disk intersects $T$ in simple intersection pattern.
\end{thm}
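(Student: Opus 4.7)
The plan is to extend the three-step isotopy procedure from the proof of Theorem \ref{thm:untie} to the present multi-type setting. Process the interior disks first, then the cross disks, at each stage adapting the arguments for eliminating inessential circles, wrong-side annular regions, and non-innermost components. For each disk $D$ in turn, I begin by eliminating inessential circles of $D \cap T$ on $T$: any such circle $C$, chosen innermost in $D$, bounds a disk $D_T$ in $T$ and a disk $D_C$ in $D$ which by credibility is disjoint from $L$, so we push $T$ across the 3-ball cobounded by $D_T$ and $D_C$. Exactly as in Observations (iii) and (iv) of the proof of Theorem \ref{thm:untie}, the regularity conditions (Rii), (Riii) of the spanning complex ensure this isotopy does not disturb previously simplified intersections on other disks.

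For interior disks, the arguments of Steps 2 and 3 in the proof of Theorem \ref{thm:untie} apply essentially verbatim. For $D_q^I$, with $V_I$ playing the role of $V$: if $D_q^I$ has an exterior region lying in $V_J$, then by the Brunnian property and the analog of Observation (vii) the outermost region forces $\sharp(D_q^I \cap L) \geq 8$, contradicting (s8) if $\sharp(D_q^I \cap (L - C_q)) < 8$ and contradicting stability otherwise. Hence $D_q^I \subset V_I$, and the symmetric argument gives $D_j^J \subset V_J$.

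For cross disks the goal is that every essential circle of $D \cap T$ is innermost in $D$ and a meridian of $V_J$. For an exterior cross disk $D_r^E$, after the cleanup step every innermost disk in $D_r^E$ is a meridian disk of either $V_I$ or $V_J$; the $V_I$ possibility is ruled out because $D_r^E \cap (L_I - C_r) = \emptyset$, so such a disk would be a meridian disk of $V_I$ disjoint from $L_I - C_r$, which combined with the Brunnian property of $L_I$ yields a contradiction. Hence every innermost disk is bounded by a meridian of $V_J$. To force all circles to be innermost, mimic Case~1 of Step~3 of the untiedness proof: a non-innermost neighbor $C_{u+1}$ of an innermost disk $D_{u1}$ with minimal $\sharp(D_{u1} \cap L)$ yields, via an annulus swap on $T$, a disk with fewer intersections with $L$, contradicting stability. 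Using Proposition \ref{prop:simplepattern}(1) this rules out the $\sharp(D_r^E \cap (L - C_r)) < 10$ alternative as well, so (s10) suffices. The free cross disk $D_s^F$ is handled in two subcases: if $D_s^F$ is stable, proceed analogously using Proposition \ref{prop:simplepattern}(2); if $D_s^F$ contains a longitude of $V_I$ with $\sharp(D_s^F \cap (L - C_s)) < 8$, use the preexisting longitude as a distinguished meridian of $V_J$ on $D_s^F$ and process the remaining arcs separately.

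The main obstacle will be the free-cross-disk subcase with a preexisting longitude: the innermost-disk surgery must be carried out without disturbing the designated longitude, and the numerical bound $\sharp < 8$ combined with Proposition \ref{prop:simplepattern}(2) must be leveraged to rule out all deviations from the simple pattern. A secondary concern is that the isotopy of $T$ used to clean up one cross disk could potentially spoil the simple intersection pattern already achieved on another; mutual disjointness of the cross disks is precisely what makes the sequential processing consistent.
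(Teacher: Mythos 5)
Your outline follows the paper's own strategy (interior disks first, then cross disks; for each disk: kill circles inessential in $T$ via credibility, kill removable annulus regions, then a stability-or-counting argument), but two load-bearing pieces are missing or misapplied. First, the annulus regions. For an interior disk $D_q^I$ the regions lying in $V_J$ cannot be removed by the argument of Theorem \ref{thm:untie} ``verbatim'': that argument rests on $T$ being an \emph{outermost knotted} torus and a connected-sum contradiction, neither of which is available here. The correct reason is that such an annulus, being bounded by longitudes of $V_J$, cuts $V_J$ into two solid tori, and since $L_J\cup\beta_I$ is Brunnian by Theorem \ref{thm:sum}, one of them misses $L_J$. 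More seriously, for the cross disks both your stability argument (``an annulus swap yields a disk with \emph{fewer} intersections'') and your counting arguments silently assume that every surviving annulus region of a cross disk actually meets $L$ on its side of $T$. This is a genuine claim requiring proof: one must first isotope away the annuli that are $\partial$-parallel into $T$ and disjoint from $L$ (checking that this isotopy does not drag $T$ back across the interior disks already placed, which is exactly where (Riii) and credibility enter --- mutual disjointness of the cross disks alone does not cover the interaction with $\cup_q D_q^I$), and then rule out non-$\partial$-parallel annuli using Theorem \ref{thm:tie} and the geometric essentiality of $L_I$ in $V_I$. Without this, the annulus swap only gives ``no more'' intersections and stability produces no contradiction, and the lower bounds $\geq 10$ and $\geq 8$ do not follow.

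Second, the free cross disks. The components of $D_s^F\cap T$ are parallel essential curves of $T$, so they are either all longitudes or all meridians of $V_I$, and the meridian alternative is a real case, not a technicality: when $D_s^F$ is stable it must be excluded by a separate unstable-circle argument (an innermost meridian disk of $V_I$ with minimal intersection, capped off by an annulus of $T$, beats any non-innermost neighbor), and only then does the longitude analysis of the exterior case apply; when $D_s^F$ is not stable, the hypothesis ``contains a longitude of $V_I$'' is precisely what kills the meridian alternative, after which Lemma \ref{lem: hopf}(1) plus the annulus claim gives the count $\geq 8$. Your sketch names this as ``the main obstacle'' but does not supply either argument. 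Finally, you invoke Proposition \ref{prop:simplepattern} inside the proof, but that proposition is itself deduced from this theorem, so using it here is circular; the needed lower bounds must be derived directly from the region-by-region observations.
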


\begin{rem}\label{rem:conditions}
We explain the conditions in this theorem in more detail.

(1) The cross disks $D_r ^E$($r \in R$), $D_s ^F$($s \in S$) are required to be mutually disjoint. As will be seen in the proof, $\cup_{q \in Q} D_q ^I$ and $\cup_{j \in J_0} D_j ^J$ are actually disjoint.

(2) In the last line in the curly bracket, the condition ``$D_s ^F$ contains a longitude of $V_I$'' seems not natural. But actually once $R \ne \emptyset$, $D_s ^F$ has to contain a longitude of $V_I$, since the cross disks are mutually disjoint.
\end{rem}

\subsection{Proof of Theorem \ref{thm:simplepattern}}\label{subsect:proofsimpleintersection}

We first present a simple lemma on s-sum decomposition of Brunnian links.

\begin{lem}\label{lem: hopf}
\cite{BM} Let $T$ be an essential torus in the complement of a Brunnian link $L$, splitting $L$ into proper sublinks $L_1 \subset V_1$ and $L_2 \subset V_2$, where $V_i$ is a solid torus bounded by $T$, for $i=1, 2$.

(1) Any meridian disk in $V_i$ intersects $L_i$ with at least 2 points, for $i=1, 2$;

(2) If a component of $L_1$ bounds a disk whose intersection with $T$ contains a meridian of $V_2$, then the intersection contains at least two meridians of $V_2$.
\end{lem}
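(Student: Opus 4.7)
My plan is to prove both parts by contradiction, combining incompressibility of $T$ with the Brunnian hypothesis on $L$. For part (1), fix a meridian disk $D$ of $V_i$ and suppose $|D \cap L_i| \le 1$. If $|D \cap L_i| = 0$ then $D \cap L = \emptyset$, since $L_{3-i} \subset V_{3-i}$ is separated from $D$ by $T$, so $D$ is a compressing disk for $T$ in $S^3 - L$, contradicting incompressibility. If $|D \cap L_i| = 1$, the point lies on a single component $C \subset L_i$, so $C$ has winding $\pm 1$ in $V_i$ and is isotopic in $V_i$ to the core; after an ambient isotopy we may take $V_i = N(C)$ and $V_i - N(C) \cong T^2 \times I$.

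To derive the contradiction in this remaining case I would split on $|L_i|$. If $L_i = \{C\}$, then $V_i - N(C)$ is disjoint from $L$, so $T$ is boundary-parallel to $\partial N(C)$ in $S^3 - L$, contradicting essentiality. If $|L_i| \ge 2$, Brunnian-ness yields linking number information: for each $C' \in L_i - C$, since $L_{3-i} \ne \emptyset$, pick $C'' \in L_{3-i}$; then $L - C''$ is an unlink containing both $C$ and $C'$, so $lk(C,C') = 0$ and $C'$ has winding zero in $V_i$. The sublink $L_i - C \subset L - C$ is itself an unlink, and using that null-homologous simple closed curves in $V_i$ are null-homotopic (since $\pi_1(V_i) \cong \mathbb Z$), one produces a meridian disk $D'$ of $V_i$ disjoint from $L_i - C$. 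Removing a small disk about $D' \cap C$ from $D'$ yields an essential annulus between $T$ and $\partial N(C)$ in $S^3 - L$; combined with the unlink structure of $L - C$, one isotopes $T$ to be boundary-parallel to $\partial N(C)$ in $S^3 - L$, again contradicting essentiality.

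For part (2), I would use a homological count. In $H_1(V_1; \mathbb Z) \cong \mathbb Z$, the boundary relation $[\partial(D \cap V_1)] = 0$ gives $[C] = [D \cap T]$, so the winding number of $C$ in $V_1$ equals the signed class of $D \cap T$; inessential circles and $V_1$-meridians in $D \cap T$ vanish, and $V_2$-meridians contribute $\pm 1$ each. If the geometric number of $V_2$-meridians is exactly one, then after simplifying $D \cap T$ via innermost-disk and $\partial$-compression arguments (enabled by incompressibility of $T$) to eliminate inessential and non-meridional circles, the winding of $C$ in $V_1$ is $\pm 1$, so $C$ is isotopic to the core of $V_1$. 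The same construction as in the second paragraph now yields a meridian disk of $V_1$ meeting $L_1$ in exactly one point, contradicting part (1).

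The main obstacle in both parts is producing the meridian disk of $V_i$ disjoint from $L_i - C$, or equivalently pushing $L_i - C$ across $T$ into $V_{3-i}$. Brunnian links admit nontrivial linking among the remaining components, so the naive isotopy is obstructed; the essential inputs are the unlink structure of $L - C$ together with incompressibility of $T$, and carrying out the required simplifications cleanly via innermost-disk arguments is the delicate technical content of the proof in \cite{BM}.
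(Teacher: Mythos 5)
Your Cases ``$|D\cap L_i|=0$'' and ``$|D\cap L_i|=1$ with $L_i=\{C\}$'' are fine, but the heart of the lemma is the remaining situation, and there your argument has a genuine gap. The missing input is the s-sum structure: the paper itself gives no proof (the lemma is quoted from \cite{BM}), and within the paper's framework the statement is a consequence of Theorem \ref{thm:sum}, which says that a splitting torus is incompressible and decomposes $L$ as an s-sum of two Brunnian links, i.e.\ $L_i\cup\beta_{3-i}$ is Brunnian, where $\beta_{3-i}$ is the core of $V_{3-i}$. When $L_i$ has at least two components this link has at least three, so $lk(C,\beta_{3-i})=0$ for every component $C$ of $L_i$; hence every component has winding number $0$ in $V_i$, meets any meridian disk an even number of times, and part (1) follows from incompressibility, while in part (2) the signed count of $V_2$-meridians in $D\cap T$, which equals the winding of $C$ in $V_1$ up to sign, is $0$, so the unsigned count is even and hence at least $2$. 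Brunnian-ness of $L$ alone cannot replace this: for $C\subset V_1$ and $C''\subset V_2$ one has $lk(C,C'')=w_1(C)\,w_2(C'')$ (product of winding numbers), so vanishing pairwise linking numbers of $L$ give no control on $w_1(C)$ when the components of $L_2$ have winding $0$ in $V_2$. You never invoke Theorem \ref{thm:sum}, and your elementary substitutes do not work.

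Concretely: in part (1) with $|D\cap L_i|=1$ and $|L_i|\ge 2$, your conclusion that $T$ can be isotoped in $S^3-L$ to be parallel to $\partial N(C)$ is not merely unproved but impossible: the only region cobounded by $T$ and $\partial N(C)$ is $V_i- N(C)$, which contains $L_i-C\neq\emptyset$, and an isotopy of $T$ in the complement of $L$ can never transfer components of $L$ from one side of $T$ to the other; the annulus $D-N(C)$ by itself contradicts nothing. (Also, ``null-homologous hence null-homotopic'' does not yield a meridian disk missing $L_i-C$ --- winding-zero sublinks can be geometrically essential, as Whitehead-type patterns show --- although in your setting $D$ itself already misses $L_i-C$, so that detour was unnecessary.) In part (2), ``the winding of $C$ is $\pm1$, so $C$ is isotopic to the core of $V_1$'' is false in general (Mazur-type patterns have winding $1$ and are not cores); what is true is that if, after removing inessential circles, $D\cap T$ were a single meridian of $V_2$, then $D\cap V_1$ would be an annulus isotoping $C$ to a longitude of $V_1$. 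Even granting that $C$ is a core, a meridian disk meeting $C$ once need not meet $L_1$ only once when $L_1$ has other components, so the appeal to part (1) does not close the argument --- and that multi-component case of part (1) is exactly the one left open. The case that genuinely needs the core argument is $L_1=C$ with $lk(C,\beta_2)=\pm1$, where the contradiction is that $V_1-N(C)\cong T^2\times I$ misses $L$, so $T$ is boundary-parallel rather than essential; the multi-component case needs $lk(C,\beta_2)=0$ from Theorem \ref{thm:sum}. Finally, the ``delicate content'' you defer to \cite{BM} is not pushing $L_i-C$ across $T$ (which cannot be done); it is the s-sum decomposition theorem itself.
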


\begin{proof}[Proof of Theorem \ref{thm:simplepattern}]
Let $U$ denote the disk system formed by $L$ and all the credible disks. Then $U \cap T$ is a union of circles, forming a graph in $T$. We will isotope $T$ to make the $L_I$-interior disks, the $L_J$-interior disks and the cross disks into simple intersection pattern in turn. So we divide our proof in three steps.

\textbf{Step 1.} Make $L_I$-interior disks into $V_I$ one by one by an isotopy of $T$.

We shall adopt the same procedure as in the proof of Theorem \ref{thm:untie}. Consider a disk $D_q ^I$. We make it into $V_I$ in three steps.

\textbf{Step 1.1.} Eliminate inessential circles in $T$.

Let $C$ be a circle component of $D_q ^I \cap T$, bounding a disk $D_{CT}$ in $T$ and a disk $D_{Cq}$ in $D_q ^I$. Using the same argument as in the proof of Theorem \ref{thm:untie}, we may assume $C$ is innermost in $D_{Cq}$. Then $D_{Cq} \cup D_{CT}$ bounds a 3-ball. Since no sphere splits a Brunnian link, this 3-ball does not intersect $L$. So we can eliminate $C$ by an isotopy of $T$ as demonstrated before. Do it inductively. After finitely many steps, $D_q ^I \cap T$ contains no inessential circle in $T$.

\textbf{Step 1.2.} Eliminate annulus regions in $V_J$.

Now $D_q ^{I} \cap T$ is a disjoint union of circles. Consider a circle component innermost in $D_q ^I$. Since the disk contained in $D_q ^I$ and bounded by this circle does not intersect $L_J$, it is a meridian disk of $V_I$. Thus all the components of $D_q ^{I} \cap T$ are meridians of $V_I$.

The intersection $D_q ^{I} \cap T$ cuts $D_q ^I$ into regions alternatively in $V_I$ and in $V_J$. Let $A$ be an annulus region contained in $V_J$. Since the boundary components of $A$ are longitudes of $V_J$, it follows that $A$ must cut $V_J$ into two solid tori. Notice that the core of each solid tori and the core of $V_I$ form a Hopf link, 
so these three cores form a $ 3 $-component link, which is the connected sum of two Hopf links, not Brunnian. Hence by Theorem \ref{thm:sum}, one of the two solid tori does not intersect $L_J$. Isotope $T$ across this solid torus to make $A$ into $V_I$. Step by step, we can eliminate all the annulus regions in $V_J$.

\textbf{Step 1.3.} Verify $D_q ^I \subset V_I$.

For either the case that $D_q ^I$ is stable or the case $\sharp (D_q ^I \cap L_J ) < 8$, an argument similar to the one used in Step 3 in the proof of Theorem \ref{thm:untie} shows that $D_q ^I \subset V_I$.

Next we isotope $T$ to make another $L_I$-interior disk $D_{q'} ^{I}$ into $V_I$ by the same token. We now check that the isotopy of $T$ keeps $D_q ^I \cap T = \emptyset$. For Step 1.1, similarly as in the proof of Theorem \ref{thm:untie}, this is due to the regularity condition (Rii) and that $D_q ^I$ does not intersect any disk contained in $T$. For Step 1.2, it is guaranteed by that $D_q ^I$ is already contained in $V_I$.

Performing the same procedure for $L_I$-interior disks one by one, we have that $\cup_{q \in Q} D_q ^I \subset V_I$.

\textbf{Step 2.} Make $L_J$-interior disks into $V_J$ one by one by an isotopy of $T$.

With $\cup_{q \in Q} D_q ^I$ already contained in $V_I$, we proceed as in Step 1 to make $L_J$-interior disks into $V_J$. Consider a disk $D_j ^J$. We only need to check that in the similar three steps, the isotopy of $T$ keeps $D_q ^I \cap T = \emptyset$ for all $q \in Q$.

\textbf{Step 2.1.} Eliminate inessential circles in $T$.

In this step, the disk $D_{Cj}$ contained in $D_j ^J$, corresponding to $D_{Cq}$ in Step 1.1, does not intersect any $L_I$-interior disk by (Rii). Thus the 3-ball it caps with $T$ does not intersect any $L_I$-interior disk.

\textbf{Step 2.2.} Eliminate annulus regions in $V_I$.

In such a case, $D_j ^J \cap L_I = \emptyset$ implies that $D_j ^J \cap T$ can only be a disjoint union of
meridians of $V_J$. The intersection circles cut $D_j ^J$ into regions alternatively in $V_I$ and in $V_J$. Let $A$ be an annulus region contained in $V_I$. An argument similar to the one used in Step 1.2 shows that $A$ caps a solid torus with $T$, say $V_A$, which does not intersect $L_I$.

We claim that $A \cap D_q ^I =\emptyset$, and thus $V_A \cap D_q ^I =\emptyset$, for all $q \in Q$. In fact, by (Rii), there is no component of $D_q ^I \cap A$ inessential in $A$ for any $q \in Q$. Suppose $C_A$ is a component of $D_q ^I \cap A$ essential in $A$ for a $q \in Q$. Since $C_A$ is a core of $V_I$, the disk contained in $D_q ^I$ and bounded by $C_A$ must intersect $L_J$, a contradiction.

An isotopy of $T$ across $V_A$ makes $A$ into $V_J$.

\textbf{Step 2.3.} Verify $D_j ^J \subset V_J$.

This step is a judgement, where everything is fixed.

Perform the same procedure for other $L_J$-interior disks one by one. We can verify that $T$ keeps avoiding $\cup_{q \in Q} D_q ^I$ and the $L_J$-interior disks already contained in $V_J$ by the same arguments as in Step 1 and Step 2 respectively. Now we have $\cup_{j \in J_0} D_j ^J \subset V_J$.

\textbf{Step 3.} Make the cross disks into simple intersection pattern.

Recall that all of the cross disks are mutually disjoint, so it is convenient to consider them together.  We proceed as in the proof of Theorem \ref{thm:untie}.

\textbf{Step 3.1.} Eliminate inessential circles in $T$.

The intersection of the cross disks and $T$ consists of mutually disjoint circles. Let $D_i$ be one of the exterior or free cross disks. Let $C$ be a circle component of $D_i \cap T$, bounding a disk $D_{CT}$ in $T$ and a disk $D_{Ci}$ in $D_i$. Using the same argument as in Step 1 in the proof of Theorem \ref{thm:untie}, we may assume $C$ is innermost in $D_{Ci}$. Then $D_{Ci} \cup D_{CT}$ bounds a 3-ball. As demonstrated in Step 1.1, this 3-ball does not intersect $L$, and thus we can eliminate $C$ by an isotopy of $T$ .

Step by step for all cross disks, we eliminate all inessential intersection circles in $T$.

\textbf{Step 3.2.} Eliminate annulus regions $\partial$-parallel to $T$.

It is easy to see that each component of $(\cup_{r \in R} D_r ^E \cup \cup_{s \in S} D_s ^F) \cap T$ is either the longitude of $V_I$ or the meridian of $V_I$. Let $D_i$ be one of the exterior or free cross disks. The intersection $D_i \cap T$ cuts $D_i$ into planar regions alternatively in $V_I$ and in $V_J$. In this step, we eliminate all the proper annulus regions $\partial$-parallel to $T$ without intersecting $L$. Strictly speaking, let $A$ be a proper annulus region contained in $V_I$ which caps a solid torus with $T$, say $V_A$, such that $V_A \cap L =\emptyset$. We claim that $A \cap D_q ^I =\emptyset$, and thus $V_A \cap D_q ^I =\emptyset$, for any $q \in Q$. Then an isotopy of $T$ across $V_A$ makes $A$ into $V_J$.

We now prove the claim. By (Rii), there is no component of $D_q ^I \cap A$ inessential in $A$ for any $q \in Q$. Suppose $C_A$ is a component of $D_q ^I \cap A$ essential in $A$ for a $q \in Q$. Let $P$ be a component of $D_q ^I \cap V_A$ one of whose boundary components is $C_A$. Clearly the boundary components of $P$ are all parallel in $A$. By (Riii), $P$ is not an annulus. Then $P$ is compressible in $V_A$. Let $D_{P1}$ be a compression disk for $P$ in $V_A$. Since $D_q ^I$ is credible, the disk contained in $D_q ^I$ and bounded by $\partial D_{P1}$, denoted $D_{P0}$, does not intersect $L$. Without loss of generality, we may reselect $C_A$ so that it is contained in $D_{P0}$. However, since $C_A$ is essential in $A$, it is parallel to either the longitude or the meridian of $V_I$. In either case, any disk bounded by $C_A$ intersects $L$, a contradiction.

We can eliminate such annulus regions contained in $V_J$ in a similar manner.

\textbf{Step 3.3.} Verify simple intersection pattern.

Consider all of the cross disks cut by $T$. For both exterior and free cross disks, the following claim investigates the remaining annulus regions. To make length short, we arrange the proof of this claim after the end of the proof of the theorem.
\textbf{Claim:} After Step 3.2,
\begin{itemize}
  \item [(Ai)] any proper annulus region in $V_I$ intersects $L_I$, and intersects any component of $L_I$ in an even number of points;
  \item [(Aii)] any annulus region in $V_J$ intersects $L_J$, and intersects any component of $L_J$ in an even number of points.
\end{itemize}

In the remainder of the proof, we discuss exterior cross disks and free cross disks separately.

\textbf{Exterior cross disks.} For any $r \in R$, the intersection $D_r ^E \cap T$ cuts $D_r ^E$ into regions alternatively in $V_I$ and $V_J$. Since $D_r ^E$ is an exterior cross disk, the intersection circles are longitudes of $V_I$. We have the following observations on the regions in $D_r ^E$.

\begin{itemize}
  \item[(Ei)]  Each innermost region is a meridian disk of $V_J$. In view of linking number, each innermost disk intersects any componens of $L_J$ in an even number of points unless $L_J$ has only one component.
  \item[(Eii)] The outermost region is contained in $V_I$. It is not an annulus by Lemma \ref{lem: hopf}. Its boundary consists of $C_r$ and an even number of intersection circles unless $L_I = C_r$, in view of linking number.
  \item[(Eiii)] No region in $V_I$ intersects $L- C_r$. For each region $\Omega$ in $V_I$, except the outermost one, $\partial \Omega$ is one outer circle enclosing odd number($\ge 3$) of inner circles in $D_r ^E$. In fact, since $\partial \Omega$ is null-homological in $V_I$, such a region has even number($>2$) of boundary components.
\end{itemize}

Now we show by cases that $D_r ^E$ is of simple intersection pattern.

{\sc Case~1:} $D_r ^E$ is stable. Let $D_{ur}$ be an innermost region in $D_r ^E$ so that $\sharp (D_{ur} \cap L_J )$ is minimal among all the innermost regions in $D_r ^E$. Let $C_{u+1,r}$ be a component of $D_r ^E \cap T$ adjacent to $\partial D_{ur}$ in $T$, and $A_T$ be the annulus in $T$ between $\partial D_{ur}$ and $C_{u+1,r}$. Then the disk $D_{ur} \cup A_T$ has the same intersection points with $L$ as $D_{ur}$. Since $D_r ^E$ is stable, the disk bounded in $D_r ^E$ by $C_{u+1,r}$ must be innermost and have the same number of intersection points as $D_{ur}$. It follows that all the intersection circles are innermost in $D_r ^E$.

{\sc Case~2:} $\sharp (D_r ^E \cap (L - C_r)) < 10$. Suppose there is an intersection circle in $D_r ^E$ which is not innermost. By (Aii), (Ei), (Eii) and (Eiii), $D_r ^E$ intersects $L- C_r$ in at least 10 points, as shown in Fig. \ref{fig:10points}.

\begin{figure}[htbp]
		  \centering
    \includegraphics[width=0.90\textwidth]{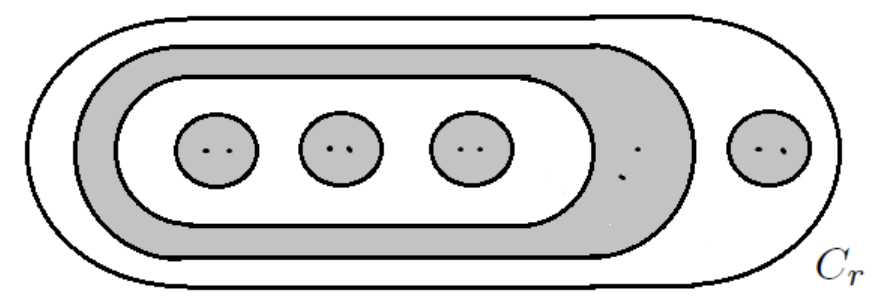}
    \caption{White regions are in $V_I$, and gray regions are in $V_J$.}
    \label{fig:10points}
\end{figure}

\textbf{Free cross disks.} For any $s \in S$, the intersection $D_s ^F \cap T$ cuts $D_s ^F$ into regions alternatively in $V_I$ and $V_J$. The intersection circles can be either longitudes of $V_I$ or
meridians of $V_I$. We discuss the two possibilities separately.

First suppose that the circles are longitudes of $V_I$. We have the following observations on the regions in $D_s ^F$.

\begin{itemize}
  \item[(Fi)] Each innermost region is a meridian disk of $V_J$, and it intersects any component of $L_J$ in an even number of points unless $L_J$ has only one component.
  \item[(Fii)] The outermost region is contained in $V_I$. It is not an annulus by Lemma \ref{lem: hopf}. Its boundary consists of $C_s$ and an even number of intersection circles.
  \item[(Fiii)] For each region $\Omega$ in $V_I$, except the outermost one, $\partial \Omega$ is one outer circle enclosing an odd number of inner circles in $D_s ^F$.
\end{itemize}

Now suppose that the intersection circles are meridians of $V_I$. We have the following observations on the regions in $D_s ^F$.

\begin{itemize}
  \item[(F'i)] The outermost region and all of the innermost regions are contained in $V_I$. Each innermost region is a meridian disk of $V_I$.
  \item[(F'ii)] If the outermost region is an annulus, it intersects $L_I -C_s$, and thus it intersects any component of $L_I -C_s$ in an even number of points, in view of linking number. In fact, if not, $C_s$ would be parallel to a meridian of $V_I$. By Brunnian property, $L_I -C_s$ is trivial in $V_I$. This would imply $L_I$ is trivial in $V_I$, a contradiction.
  \item[(F'iii)] For any region $\Omega$ in $V_J$, $\partial \Omega$ is one outer circle enclosing an odd number of inner circles in $D_s ^F$, and each disk contained in $D_s ^F$ and bounded by one of such inner circles intersects $L_I$.
\end{itemize}

We now show by cases that $D_s ^F$ is of simple intersection pattern.

{\sc Case~1:} $D_s ^F$ is stable. Suppose that the intersection circles are meridians of $V_I$. By (F'i), the components of $D_s ^F \cap T$ can not be all innermost in $D_s ^F$. Notice that each innermost region is a meridian disk of $V_I$. There exists an innermost region in $D_s ^F$, say $D_{us}$, and a component of $D_s ^F \cap T$ adjacent to $\partial D_{us}$ in $T$, say $C_{u+1, s}$, such that
\begin{itemize}
  \item[(1i)] $\sharp (D_{us} \cap L_I )$ is minimal among all the innermost regions,
  \item[(1ii)] $C_{u+1, s}$ is either not innermost in $D_s ^F$, or innermost but not achieving minimal intersection points with $L_I$.
\end{itemize}
Let $A_T$ be the annulus in $T$ between $C_{u+1, s}$ and $\partial D_{us}$. Then by (F'iii) and (Aii), the disk $D_{us} \cup A_T$ has less intersection points with $L$ than the disk contained in $D_s ^F$ and bounded by $C_{u+1, s}$. This implies that $C_{u+1, s}$ is an unstable circle on $D_s ^F$, a contradiction.

Hence $D_s ^F \cap T$ consists of longitudes of $V_I$. The remainder of the argument is analogous to that in Case 1 for exterior cross disks and is left to the reader.

{\sc Case~2:} $\sharp (D_s ^F \cap (L - C_s)) < 8$ and $D_s ^F$ contains a longitude of $V_I$. All of the intersection circles are longitudes of $V_I$. Suppose there is a circle in $D_s ^F$ which is not innermost. By Lemma \ref{lem: hopf}(1), every meridian disk of $V_J$ intersects $L_J$ in at least 2 points. Then by (Fi), (Fii), (Fiii), (Ai) and (Aii), $D_s ^F$ intersects $L- C_s$ in at least 8 points, as shown in Fig. \ref{fig:8pts}.

\begin{figure}[htbp]
		  \centering
    \includegraphics[width=0.70\textwidth]{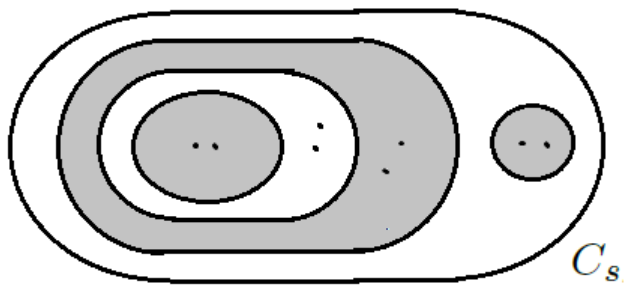}
    \caption{White regions are in $V_I$, and gray regions are in $V_J$.}
    \label{fig:8pts}
\end{figure}

\end{proof}

\begin{proof}[Proof of the claim]
We only prove (Ai), and (Aii) follows in a similar manner by interchanging $V_I$ and $V_J$ in the proof below.

Let $A$ be a proper annulus region in $V_I$. Assume for contrary that $A \cap L_I = \emptyset$. First suppose that all the intersection circles are longitudes of $V_I$. Then $A$ must cut $V_I$ into two solid tori. Since the core of each solid torus and the core of $V_J$ form a Hopf link, by Theorem \ref{thm:sum}, one of the two solid tori does not intersect $L_I$. However, we have eliminated such $A$ in Step 3.2, a contradiction.

Now suppose that all the intersection circles are meridians of $V_I$. If $A$ is $\partial$-parallel in $V_I$, then it cuts off a compression solid torus $V_A$ in $V_I$. If $V_A$ contains some components of $L_I$, by Proposition 3.2.1 in \cite{BM}, the proper sublink in the closure of $V_I - V_A$ would not be geometrically essential. A compression disk in this solid torus, after isotopy, would be a meridian disk in $V_I$. However, again by Proposition 3.2.1 in \cite{BM}, $L_I$ is geometrically essential in $V_I$, a contradiction. So $V_A$ does not intersect $L$. Recall that we have eliminated such $A$ in Step 3.2. Hence $A$ cannot be $\partial$-parallel in $V_I$. Then $\partial A$ cuts $T$ into two annuli such that $A$ and one of the annuli cobound a knotted solid torus $V_{AI}$ in $V_I$. By Theorem \ref{thm:tie}, the sublink in $V_{AI}$ is not geometrically essential. A compression disk in $V_{AI}$, after isotopy, would be a meridian disk in $V_I$, again a contradiction.

We have shown that $A$ intersects $L_I$. Since $A$ separates $V_I$, it intersects any component of $L_I$ in an even number of points.
\end{proof}

\begin{proof}[Proof of Proposition \ref{prop:simplepattern}]
Let $T$ be an essential torus in the complement of $L$, splitting $S^3$ into two solid tori $V_I$ and $V_J$ so that $L \cap V_I = L_I$ and $L \cap V_J = L_J$.

(1) Applying Theorem \ref{thm:simplepattern}, we may assume $D_i^E$ intersects $T$ in simple intersection pattern. By Lemma \ref{lem: hopf}, any meridian disk of $V_I$ has more than one intersection point, and the outermost region has more than two boundary components. So if $\sharp (D_i ^E \cap (L - C_i)) \le 4$ , the only possibility is as shown in Fig. \ref{fig:64pts}(1).

(2) First suppose all the intersection circles in $D_i^F$ are longitudes of $V_I$. Applying Theorem \ref{thm:simplepattern}, we may assume $D_i^F$ is of simple intersection pattern. By Lemma \ref{lem: hopf}(1) and (Fii) in the proof of Theorem \ref{thm:simplepattern}, there are at least two inner disks each intersecting $L_J$ in at least 2 points. In view of linking number, the outer region intersects $L_I -C_i$ in an even number of points. So $D_i^F$ with least number of intersection is as shown in Fig. \ref{fig:64pts}(2).

Now suppose that all the intersection circles in $D_i^F$ are meridians of $V_I$. By Lemma \ref{lem: hopf}(1), and (Aii) and (F'ii) in the proof of Theorem \ref{thm:simplepattern}, if $\sharp (D_i ^F \cap (L - C_i)) \le 6$, the intersections in $D_i^F$ can only be as shown in Fig. \ref{fig:64pts}(3).

\begin{figure}[htbp]
		  \centering
    \includegraphics[width=\textwidth]{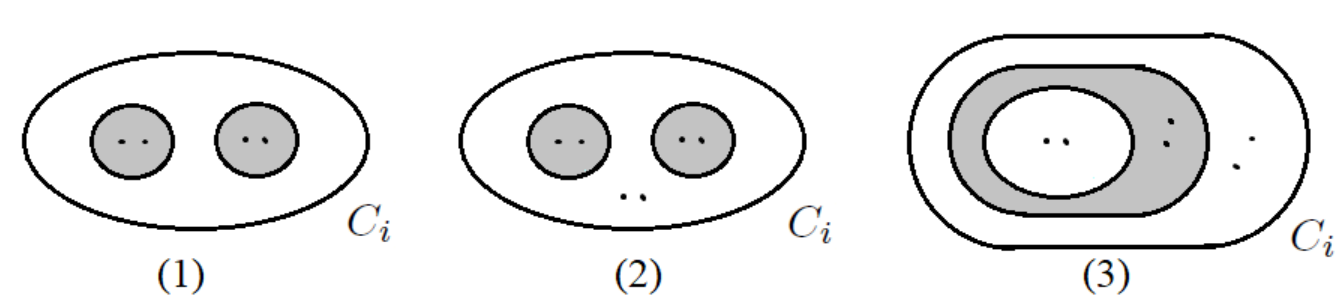}
    \caption{White regions are in $V_I$, and gray regions are in $V_J$.}
    \label{fig:64pts}
\end{figure}

\end{proof}

\section{Subcriteria for S-primeness}\label{sect:sprime}

Based on Theorem \ref{thm:simplepattern} in the previous section, we present four methods to prove a Brunnian link is s-prime and illustrate them by typical examples. 

Using our methods, we can discriminate s-primeness for all the Brunnian links in literature. Especially, the proof for Carpet($m,n,p$) will apply verbatim to show all the new Brunnian links in \cite{BW}, Snakes, Fountains, Cirrus, Jade-pendants and Wheels, are s-prime. Meanwhile, there are many Brunnian links that are not s-prime, such as Milnor links (see \cite{BM}), many links in \cite{BFJRVZ,BCS} and so on.

In this section, we call the following paragraph \emph{Standard premise}:

Let $L=\cup_{i=1}^n C_i$ be an $n$-component Brunnian link. Let $T$ be an essential torus in the complement of $L$, splitting $S^3$ into two solid tori $V_I$ and $V_J$ with $L \cap V_I = L_I = \cup_{i \in I} C_i$ and $L \cap V_J = L_J = \cup_{j \in J} C_i$. Let $Q, R, S$ be mutually disjoint subsets of $I$ and $J_0$ be a subset of $J$, and let $D_q ^I$($q \in Q$), $D_j ^J$($j \in J_0$), $D_r ^E$($r \in R$) and $D_s ^F$($s \in S$) be $L_I$-interior disks, $L_J$-interior disks, exterior cross disks, and free cross disks respectively. Assume that the cross disks are mutually disjoint, the union of these disks and $L$ forms a disk system $U$, and every disk intersects $T$ in simple intersection pattern.

\subsection{Discarding components}\label{subsect:componentsdiscard}

This simple method is powerful when the number of components of the link is large.

\begin{subcriterion}\label{prop: throw}
Under Standard premise, let $U ^\circ$ be $U$ with some connected components in $V_I$ discarded. Assume $R \cup S \ne \emptyset$. Then no connected component of $U ^\circ$ contained in $V_J$ is split out by a sphere.
\end{subcriterion}

\begin{proof}
By Theorem \ref{thm:sum}, the link $L_J \cup \beta _I$ is Brunnian. Thus no sphere splits $L_J \cup \beta _I$. By construction of $U$, no sphere splits $\beta _I \cup (U \cap V_J)$. Notice that a cross disk in simple intersection pattern contains a longitude of $V_I$, which is parallel to $\beta _I$. It follows from the construction of $U$ that in the complex $U$ with some connected components in $V_I$ discarded, no connected component in $V_J$ can be split out by a sphere.
\end{proof}

This proposition provides a method to rule out essential torus splitting $L_I$ and $L_J$, as the hypothetical torus $T$ cannot exist if discarding some components of $U$ in $V_I$ makes some component of $U$ in $V_J$ split out.

\begin{example}\label{example:debrunner}

\begin{figure}[htbp]
		  \centering
    \includegraphics[width=0.90\textwidth]{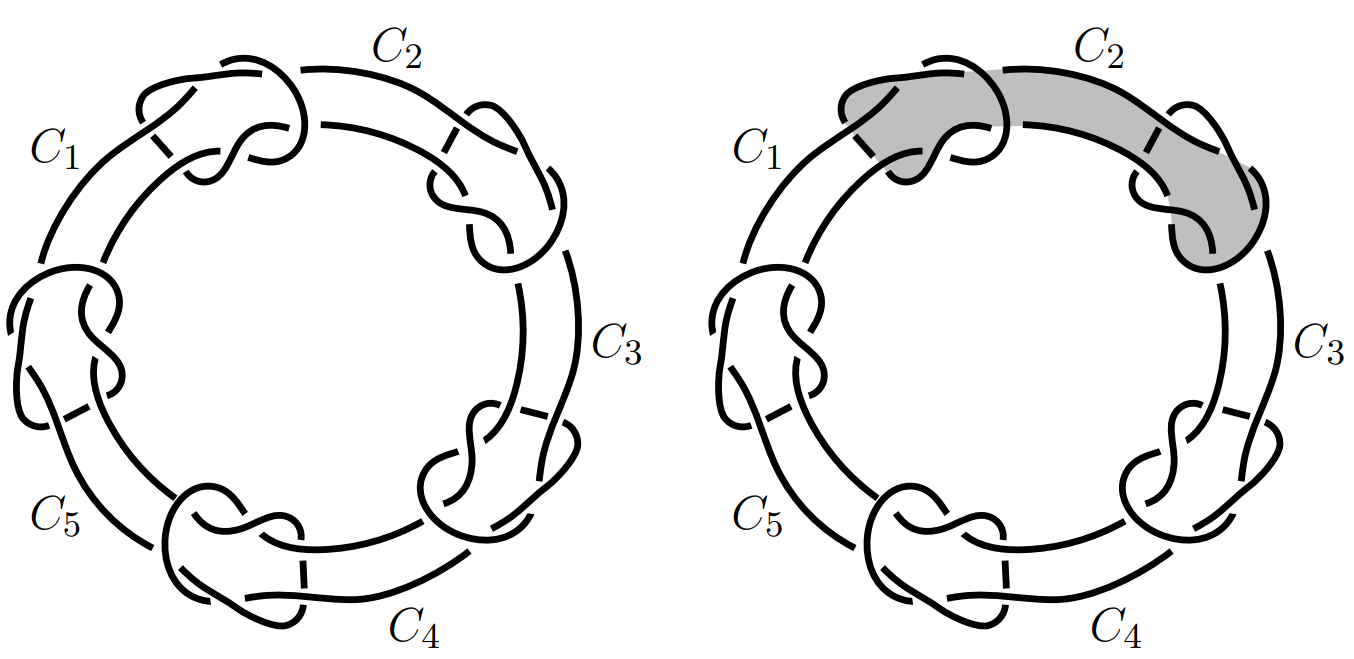}
    \caption{deBrunner(5).}
    \label{fig:debrunner5}
\end{figure}

deBrunner(5) is s-prime. Denote the link in Fig. \ref{fig:debrunner5} by $L$. In view of the symmetry of $L$, it suffices to show there is no essential torus splitting $C_1$ and $C_2$. Assume for contrary that $T$ is such a torus. Take the cross disk $D_2$ bounded by $C_2$ as shown in Fig. \ref{fig:debrunner5}. Depending on whether $T$ splits $C_2$ and $C_3$, the following discussion is complete.
\begin{itemize}
  \item [(i)] $V_I$ contains only $C_2$;
  \item [(ii)] $V_I$ contains exactly $C_2 \cup C_3$;
  \item [(iii)] $V_I$ contains more than $C_2 \cup C_3$ and $V_J$ contains more than $C_1$;
  \item [(iv)] $V_J$ contains only $C_1$;
  \item [(v)] $V_J$ contains exactly $C_1 \cup C_3$;
  \item [(vi)] $V_J$ contains more than $C_1 \cup C_3$ and $V_I$ contains more than $C_2$.
\end{itemize}
None above is possible. The main point of our argument is that in $L \cup D_2$, after deleting a connected component other than $C_1 \cup D_2 \cup C_3$, then all the other components split out.

For (ii) and (iv), they are impossible by Proposition \ref{prop:simplepattern}(2). Clause (i) is equivalent to (iv) by the symmetry of $L$, thus also impossible. For (iii) and (vi), they are impossible by Criterion \ref{prop: throw}. In fact, in $L \cup D$, once we delete a circle component in $V_I$, each circle component in $V_J$ splits out. Finally, (v) is contained in (iii) after a rotation of $L$.
\end{example}

Proposition 6.2.1 in \cite{BM}, the uniqueness of s-sum decomposition for Brunnian links, can simplify the task of showing s-primeness. Specifically, as any two tori splitting $L$ are disjoint after isotopy, if a torus $T$ splits $L$ into $L_I$ and $L_J$ and a torus $T'$ splits $L$ into $L_{I'}$ and $L_{J'}$, then one of $L_I \cap L_{I'}$, $L_I \cap L_{J'}$, $L_J \cap L_{I'}$ and $L_J \cap L_{J'}$ is empty.

\begin{example}\label{example:torus(m,n)}
\begin{figure}[htbp]
		  \centering
    \includegraphics[width=0.50\textwidth]{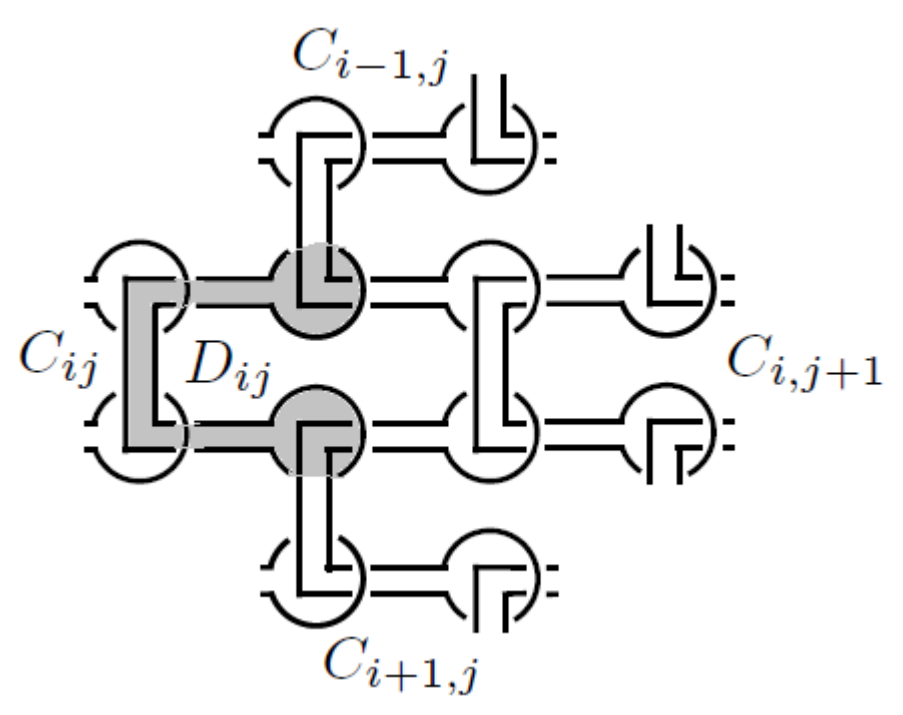}
    \caption{Torus($m,n$) is s-prime.}
	\label{fig:torusprime}
\end{figure}
For any $m>1,n>2$, Torus($m,n$) is s-prime. Up to symmetry, there is only one kind of component, as $C_{ij}$ in Fig. \ref{fig:torusprime}. It suffices to show that no essential torus splits any one of $C_{ij}$, $C_{i-1,j}$ and $C_{i+1,j}$ from the other two. Take the cross disk $D_{ij}$. By Proposition \ref{prop:simplepattern}(2), no essential torus splits $C_{i+1,j}$ and $C_{ij} \cup C_{i-1,j}$ or splits $C_{i-1,j}$ and $C_{ij} \cup C_{i+1,j}$.

Suppose there is an essential torus $T$ splitting $C_{ij}$ and $C_{i-1,j} \cup C_{i+1,j}$. In $L \cup D_{ij}$, once we delete a circle component other than $C_{i,j+1}$, all other circle components split out. Thus by Criterion \ref{prop: throw},  $T$ either splits $C_{i-1,j} \cup C_{i+1,j}$ from all the other components, or splits $C_{ij} \cup C_{i,j+1}$ from all the other components. For the first case, by the symmetry of $L$, there would be a torus splitting $C_{i-3,j} \cup C_{i-1,j}$ from all the other components. by Proposition 6.2.1 in \cite{BM}, this is impossible. Similarly, the second case is impossible.
\end{example}

\begin{defn}
Let $C$ be a component of a Brunnian link $L$. If there is an essential torus in $S^3 - L$ splitting $C$ and $L-C$, then $C$ is \emph{multiple}; otherwise, \emph{simple}.
\end{defn}

\begin{figure}[htbp]
		  \centering
    \includegraphics[width=\textwidth]{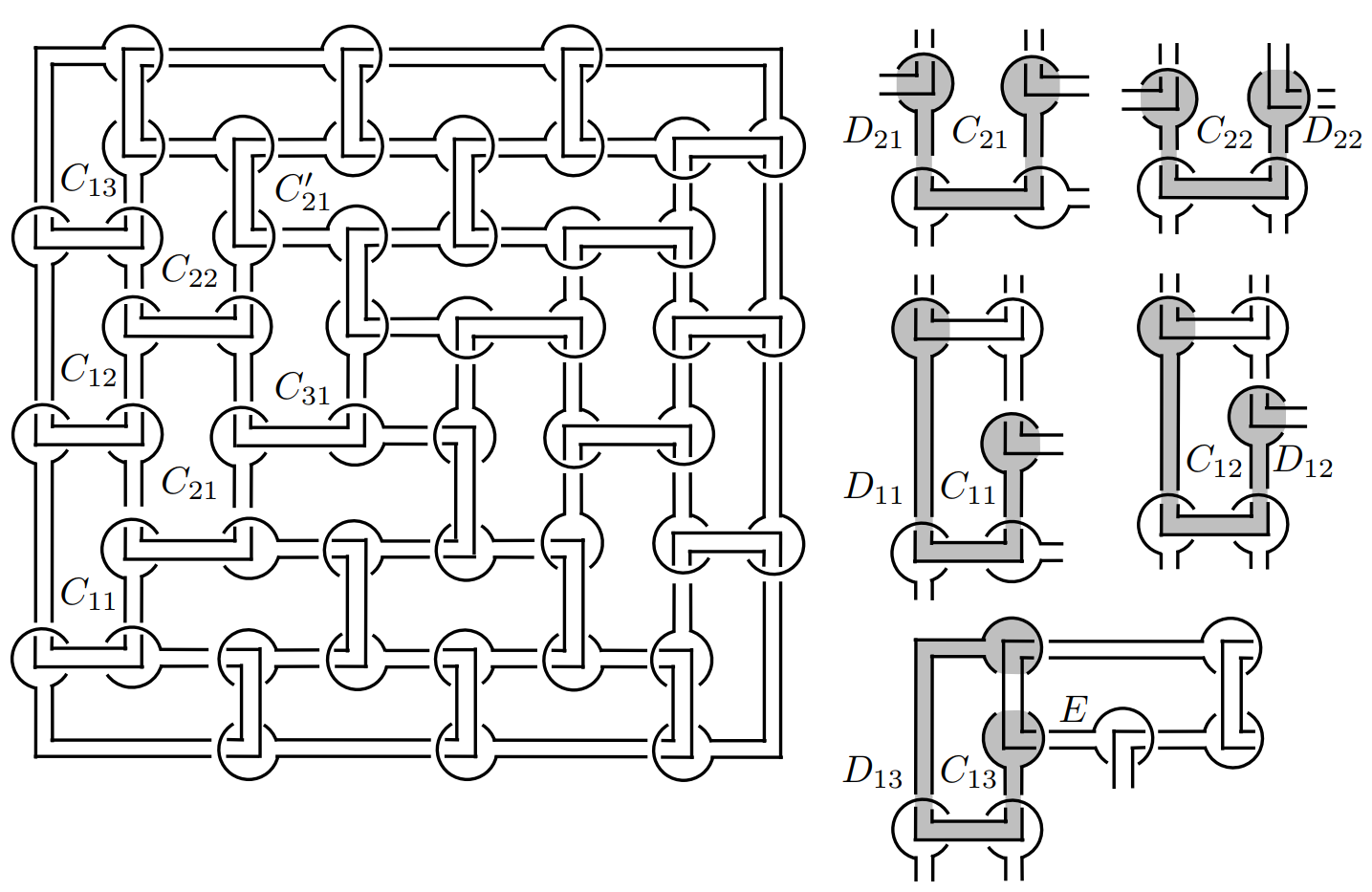}
    \caption{Carpet(1,3,4) is s-prime.}
	\label{fig:carpet}
\end{figure}

\begin{example}\label{example:carpet}
Carpet(1,3,4) is s-prime. Denote this link by $L$. Up to symmetry, there are 6 kinds of components. We will take cross disks in Fig. \ref{fig:carpet} one by one to show $L$ is s-prime.

On account of the symmetry of this link, the following discussion is complete.
\begin{itemize}
  \item [(i)] No essential torus splits one of $C_{11}$, $C_{12}$ and $C_{21}$ from the other two;
  \item [(ii)] No essential torus splits one of $C_{12}$, $C_{13}$ and $C_{22}$ from the other two;
  \item [(iii)] No essential torus splits $C_{31}$ and $C_{12} \cup C_{21}$;
  \item [(iv)] No essential torus splits $C'_{21}$ and $C_{13} \cup C_{22}$;
  \item [(v)] Every component is simple.
\end{itemize}

For (iii), take cross disk $D_{21}$ and use Proposition \ref{prop:simplepattern}(2). For (i), take cross disk $D_{11}$. By Proposition \ref{prop:simplepattern}(2), any essential torus neither splits $C_{12}$ and $C_{11} \cup C_{21}$, nor splits $C_{21}$ and $C_{11} \cup C_{12}$. Suppose there is an essential torus $T$ splitting $C_{11}$ and $C_{12} \cup C_{21}$. In $L \cup D_{11}$, once we delete a circle component, all other circle components split out. By Criterion \ref{prop: throw}, either $C_{11}$ is multiple or $T$ splits $C_{12} \cup C_{21}$ from all the other components.  By (iii), it only remains to show $C_{11}$ is simple.

For (iv), take cross disk $D_{22}$ and use Proposition \ref{prop:simplepattern}(2). For (ii), take cross disk $D_{12}$. By Proposition \ref{prop:simplepattern}(2), any essential torus neither splits $C_{13}$ and $C_{12} \cup C_{22}$, nor splits $C_{22}$ and $C_{12} \cup C_{13}$. As demonstrated before, by Criterion \ref{prop: throw}, it remains to show $C_{12}$ is simple, which is proved in (i).

For (v), we have shown that $C_{12}$, $C_{21}$ are simple when proving (i), $C_{13}$ and $C_{22}$  are simple when proving (ii), and $C_{31}$ is simple by (iii). For the remained case that $C_{11}$ is simple, take cross disk $D_{13}$ intersecting $C'_{11}$ in 4 points. This is Case (1.0.0) in Lemma \ref{lem: 6cases}(1). Delete the ear $E$ (See Subsection \ref{subsect:leastintersectionpoints}). Then using the approach in Example 5.3 in \cite{BW}, we can show the asserted incredible circle in Criterion \ref{prop:least}(1) does not exist.
\end{example}

\subsection{Interior disks only}\label{subsect:interioronly}
If there is no cross disk, we can give a sufficient and necessary criterion to detect s-primeness, parallel to Theorem \ref{thm:untie}.

\begin{thm}\label{thm:interiorprime}
Let $L=\cup_{i=1}^n C_i$ be an $n$-component Brunnian link, let $I$ and $J$ be disjoint non-empty subsets of $\{1,...,n\}$, and let $D_i ^I$($i \in I$) and $D_j ^J$($j \in J$) be $L_I$-interior disks and $L_J$-interior disks respectively. Assume that each interior disk is s$\vee 8$, and
\begin{center}
$U_I = L_I  \bigcup_{i \in I} D_i ^I$

$U_J = L_J  \bigcup_{j \in J} D_j ^J$
\end{center}
are disjoint disk systems. Then there is no essential torus in the complement of $L$ splitting $L_I$ and $L_J$ if and only if any torus splitting $U_I$ and $U_J$, if exists, is $\partial$-parallel to a component of $L$.
\end{thm}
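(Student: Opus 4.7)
The plan is to combine the Simple Intersection Pattern Theorem (Theorem \ref{thm:simplepattern}) with the fact that Brunnian links are non-split. Applying Theorem \ref{thm:simplepattern} with $R = S = \emptyset$, $Q = I$ and $J_0 = J$, and noting that ``simple intersection pattern'' for an interior disk simply means no intersection at all, any essential torus $T$ in $S^3 - L$ splitting $L_I$ from $L_J$ may be isotoped in $S^3 - L$ to be disjoint from $U := U_I \cup U_J$, which is a spanning complex because $U_I$ and $U_J$ are disjoint spanning complexes.

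The ``if'' direction is then a one-line contradiction. Assuming every torus splitting $U_I$ from $U_J$ is $\partial$-parallel to a component of $L$, if some essential $T$ were to split $L_I$ from $L_J$, then after the above isotopy $T$ would split $U_I$ from $U_J$, so by hypothesis $T$ would be parallel to $\partial N(C)$ for some $C \subset L$, contradicting essentiality.

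For the ``only if'' direction, assume no essential torus splits $L_I$ from $L_J$, and let $T$ be a torus splitting $U_I$ and $U_J$. Then $T$ separates $L_I$ from $L_J$, and by assumption is not essential in $S^3 - L$, so $T$ is either $\partial$-parallel (the desired conclusion) or compressible in $S^3 - L$. The compressible case is the main obstacle, and I would dispose of it as follows. By Alexander's theorem at least one side of $T$ is a solid torus; since a nontrivial knot complement has incompressible boundary, any compressing disk $D$ for $T$ in $S^3 - L$ lies on a solid torus side, say $V_I$, so $\partial D$ is a meridian of $V_I$. Compressing $V_I$ along $D$ turns it into a $3$-ball containing $L_I$, whose boundary $2$-sphere in $S^3 - L$ separates $L_I$ from $L_J$. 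This would split $L$; but both $L_I$ and $L_J$ are proper sublinks of the Brunnian $L$ and hence unlinks, so their split union $L$ would itself be an unlink, contradicting nontriviality. Thus the compressible case cannot occur, and $T$ is $\partial$-parallel to a component of $L$.
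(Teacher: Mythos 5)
Your proof is correct and takes essentially the same approach as the paper's: the ``if'' direction is the same application of Theorem \ref{thm:simplepattern} with $R=S=\emptyset$, $Q=I$, $J_0=J$, and the ``only if'' direction reduces to showing that a non-$\partial$-parallel torus splitting $L_I$ and $L_J$ cannot be compressible in $S^3-L$. For that last point the paper simply cites Theorem \ref{thm:sum} (any torus splitting a Brunnian link is incompressible, from \cite{BM}), whereas you rederive it by compressing the solid-torus side to a $2$-sphere separating $L_I$ from $L_J$ and invoking the non-splittability of Brunnian links; this is a correct, self-contained substitute for the citation.
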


\begin{proof}
We first prove the ``if'' implication. Suppose that $T$ is an essential torus in the complement of $L$ splitting $L_I$ and $L_J$. Then by Theorem \ref{thm:simplepattern}, $(U_I \cup U_J ) \cap T = \emptyset$. Clearly $T$ is incompressible in the complement of $U_I \cup U_J $. Thus by the condition, $T$ is $\partial$-parallel to a component of $L$, contradicting that $T$ is essential in the complement of $L$.

Now we prove the ``only if'' implication. Suppose that $T$ is torus in the complement of $U_I \cup U_J$ splitting $U_I$ and $U_J$ and not $\partial$-parallel to any component of $L$. Then $T$ splits $L_I$ and $L_J$. By Theorem \ref{thm:sum}, $T$ is incompressible in the complement of $L$. This completes the proof.
\end{proof}

For Brunnian links with at least 3 components, the ``if'' implication of this theorem gives a method to prove s-primeness, while the ``only if'' implication indicates this method is theoretically universal. However, this method does not work for 2-component links, and will become complicated when the number of components is large.

\begin{example}\label{example:W5In}
\begin{figure}[htbp]
		  \centering
    \includegraphics[width=\textwidth]{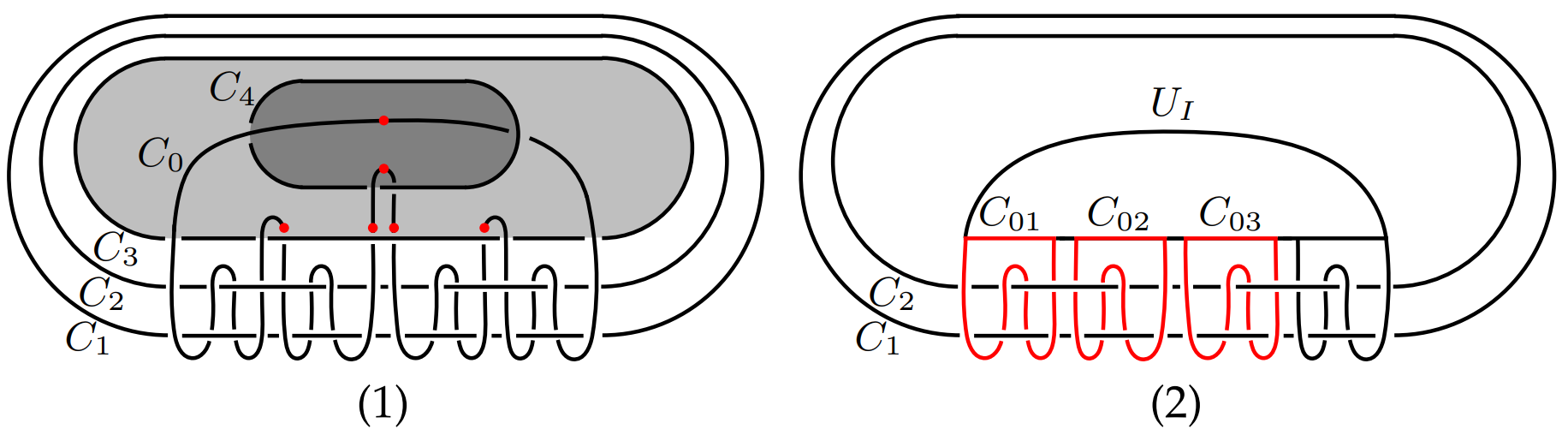}
    \caption{W(5).}
	\label{fig:W5In}
\end{figure}
For the link W($5$) in Fig. \ref{fig:W5In}(1), we show no torus splits $C_1 \cup C_2$ and $C_0 \cup C_3 \cup C_4$. Take interior disks $D_3 ^I$ and $D_4 ^I$ as shown in Fig. \ref{fig:W5In}(1). By Theorem \ref{thm:interiorprime}, we only need to prove no torus splits $C_1 \cup C_2$ and $C_0 \cup D_3 ^I \cup D_4 ^I$. The regular neighbourhood of the later complex is isotopic to the regular neighbourhood of the graph $U_I$ in Fig. \ref{fig:W5In}(2). The result follows from the claim that no torus splits the red link $C_{01} \cup C_{02} \cup C_{03}$ and $C_1 \cup C_2$, which will be proved in the last paragraph of this paper by a generalized version of Theorem \ref{thm:interiorprime}.
\end{example}

For more examples applying this method, see Appendix \ref{subsect:appendix1}.

\subsection{Analysis on plausible annuli}\label{subsect:plausibleannuli}

To show a hypothetical torus cannot exist, we may estimate the shape of the torus. We thus introduce some notations.

Under Standard premise, the components of the intersection of $T$ and the cross disks are \emph{plausible circles}, denoted $\tilde{C}_1,\tilde{C}_2,...,\tilde{C}_t$, successively indexed along $T$. For any $k =1,2,...,t$, the annulus in $T$ between $\tilde{C}_k$ and $\tilde{C}_{k+1}$ is a \emph{plausible annulus}, denoted $\tilde{A}_k$; the disk bounded by $\tilde{C}_k$ in a cross disk is a \emph{plausible disk}, denoted $\tilde{D}_k$; the connected component of $V_J - (\cup_{r \in R} D_r ^E \cup_{s \in S} D_r ^F)$ containing $\tilde{A}_k$ is a \emph{plausible cylinder}, denoted $\tilde{B}_k$.

\begin{subcriterion}\label{prop: arc}
Under Standard premise, let $\alpha$ be an arc component of $L_J - \cup_{r \in r} D_r ^E \cup \cup _{s \in s} D_s ^F$.

(1) $\alpha$ connects two consecutive plausible disks on the corresponding sides of cross disks unless two endpoints of $\alpha$ are on one side of a cross disk.

(2) If $\alpha$ connects two plausible disks $\tilde{D}_k$ and $\tilde{D}_{k+1}$ in one cross disk $D_i$ , and $\alpha$ is \emph{unknotted}, i. e. forming unknot with an arc connecting $\partial \alpha$ in $D_i$, then $\alpha$ is the core of $\tilde{B}_k$.
\end{subcriterion}

Here, a \emph{core} of a plausible cylinder is a trivial arc in the plausible cylinder connecting its two bases.

\begin{proof}
Since (1) is obvious, we need only prove (2). There are two cases depending on whether $\tilde{D}_k$ and $\tilde{D}_{k+1}$ are on the same side of $D_i$. First suppose they are on the same side of $D_i$. Let $\beta_\alpha$ be an arc in $D_i$ connecting $\partial \alpha$ so that $\beta_\alpha$ intersects both $\tilde{C}_k$ and $\tilde{C}_{k+1}$ in exactly one point. Let $D_\alpha$ be a disk bounded by $\alpha \cup \beta_\alpha$. Consider $D_\alpha$ and $D_i$. A standard innermost circle/outermost arc argument shows that we may assume $D_\alpha \cap D_i = \beta_\alpha$. Then consider $D_\alpha$ and $\tilde{A}_k$. By the choice of $\beta_\alpha$, $D_\alpha \cap \tilde{A}_k$ consists of an arc connecting $\tilde{C}_k$ and $\tilde{C}_{k+1}$ and some trivial circles in $\tilde{A}_k$. We can eliminate all the circle components by a  standard innermost circle argument.  Notice that the remained arc component is parallel to the core of $\tilde{B}_k$. Thus $\alpha$ is the core of $\tilde{B}_k$. See Fig. \ref{fig:arc}(1).

\begin{figure}[htbp]
		  \centering
    \includegraphics[width=0.70\textwidth]{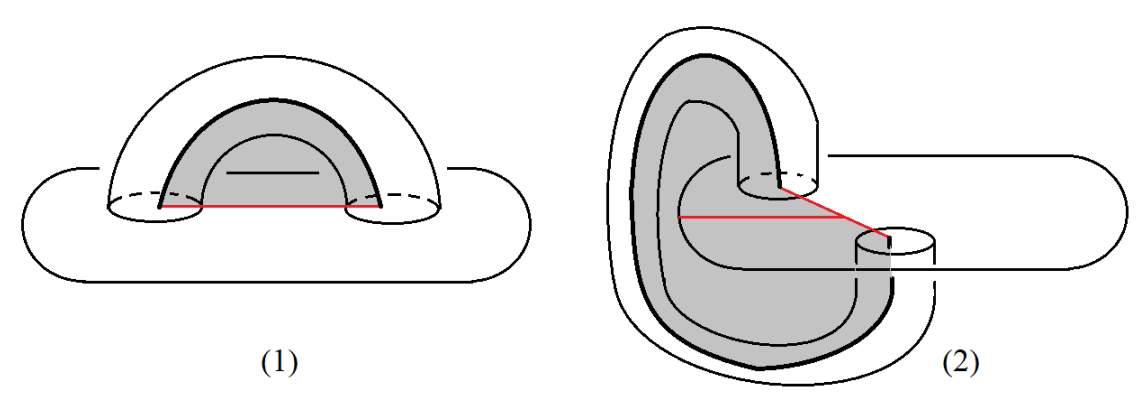}
    \caption{The arc $\alpha$ is the core of $\tilde{B}_k$.}
    \label{fig:arc}
\end{figure}

Now suppose two plausible disks are on different sides of $D_i$. The proof is almost identical, the major change being the substitution of $\beta_\alpha$ into a shape ``T'' graph for $D_\alpha \cap D_i$. See Fig. \ref{fig:arc}(2).
\end{proof}

\begin{example}\label{example:lampprime}
Recall the link in Fig. \ref{fig:lampstable}(1). Using the notations in Example \ref{example:lampstable}, we prove it is s-prime. Suppose there is an essential torus $T$ splitting the link. Take the stable exterior cross disk $D_1$ bounded by $C_1$ as shown in Fig. \ref{fig:lampstable}(2). It is easy to see that $lk(C_1, C_2 )= lk(C_1, \beta _J ) lk( \beta _I,C_2 )$. Since $lk(C_1, C_2 )=0$ and $C_1$ and $C_2$ are symmetric, we may assume $lk(C_1, \beta _J )=0$. Thus there is a plausible cylinder, say $\tilde{B}_k$, whose bases are on the same side of $D_1$. Without loss of generality, assume its bases are on the positive side of $D_1$. Clearly, $\tilde{B}_k$ intersects $C_2$ in some odd labelled arcs.

Let $\alpha_i$ be a component of $\tilde{B}_k \cap C_2$. We claim $\alpha_i$ is the core of $\tilde{B}_k$. In fact, if not, $\partial \alpha_i$ would be in one plausible disk, say $\tilde{D}_k$. Then once we delete all the other odd labelled arcs, $\tilde{A}_k \cup \tilde{D}_{k+1}$ would be a disk capping $\alpha_i$ with $D_1$. This implies $lk(\alpha _i, \alpha _{i + 1})=0$, a contradiction.

Hence all the components of $\tilde{B}_k \cap C_2$ are cores of $\tilde{B}_k$. By Lemma \ref{lem: hopf}(1), $\tilde{B}_k \cap C_2$ has more than one component. Consider two of the components. Then they are parallel to each other in $\tilde{B}_k$. Thus for any even labelled arc $\alpha_j$, this two arcs have the same linking number with $\alpha_j$. This is impossible as we know $lk(\alpha _i, \alpha _{i \pm 1}) = -1$ and otherwise the linking number is $0$.
\end{example}

\subsection{Cross disk with the least intersection points}\label{subsect:leastintersectionpoints}

Recall Proposition \ref{prop:simplepattern}. From its proof we see that if a cross disk $D_i$ intersects $V_J$ in its meridian disks, then $\sharp (D_i \cap L_J ) \geq 4$. In this subsection, we analyse cross disks intersecting $L_J$ in the least number of points in detail.

Let $L=\cup_{i=1}^n C_i$ be an $n$-component Brunnian link. Let $T$ be a torus splitting $S^3$ into two solid tori $V_I$ and $V_J$, such that $L \cap V_I = L_I = \cup_{j \in J} C_i$ and $L \cap V_J = L_J = \cup_{j \in J} C_i$. Let $D_i$ be a cross disk bounded by $C_i$ intersecting $V_J$ in its meridian disks such that $\sharp (D_i \cap L_J )= 4$. We give the following 6 cases on the intersection relationship of $D_i$, $T$ and $L_j$. See Fig. \ref{fig:6cases} for illustration.

If $D_i$ intersects only one component of $L_j$:

$ \begin{array}{llllll}
  Case & lk(C_i, \beta _J ) & lk(C_j, \beta _I ),\ \forall j & Case & lk(C_i, \beta _J ) & lk(C_j, \beta _I ),\ \forall j \\
  (1.0.0): & 0 & 0 & (1.2.0): & 2\ \ (L_I=C_i) & 0 \\
  (1.0.2): & 0 & 2\ \ (L_J = C_j) & (1.2.2): & 2\ \ (L_I = C_i) & 2\ (L_J = C_j )
\end{array}$

If $D_i$ intersects more than one component of $L_j$:

$ \begin{array}{llllll}
  Case & lk(C_i, \beta _J ) & lk(C_j, \beta _I ),\ \forall j & Case & lk(C_i, \beta _J ) & lk(C_j, \beta _I ),\ \forall j \\
  (2.0.0): & 0 & 0 & (2.2.0): & 2\ \ (L_I=C_i) & 0
\end{array}$

\begin{figure}[htbp]
		  \centering
    \includegraphics[width=0.90\textwidth]{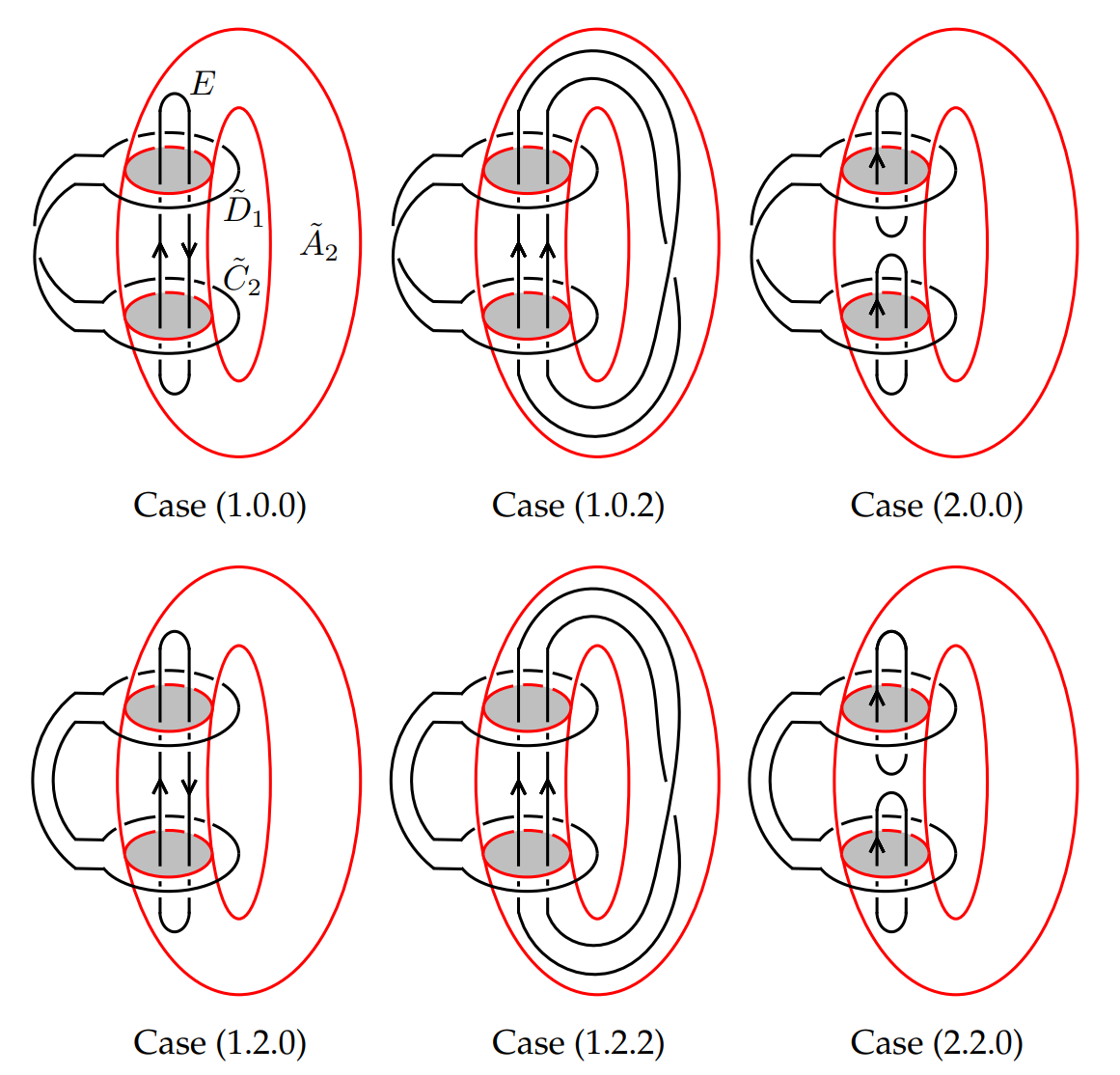}
    \caption{A cross disk with the least intersection points with $L_j$.}
	\label{fig:6cases}
\end{figure}

\begin{lem}\label{lem: 6cases}
(1) If $D_i$ is an exterior cross disk, one of the 6 cases above happens.

(2) If $D_i$ is a free cross disk, one of Case (1.0.0), (1.0.2) and (2.0.0) happens.
\end{lem}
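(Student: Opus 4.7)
The plan is to read off the combinatorial structure of $D_i$ cut by $T$ from the hypothesis $\sharp(D_i \cap L_J)=4$ and then enumerate the possibilities using linking numbers. Because $D_i$ meets $V_J$ in meridian disks of $V_J$, every component of $D_i \cap T$ is a longitude of $V_I$ and, under the simple intersection pattern, is innermost in $D_i$ on its $V_J$ side. Hence $D_i \cap T$ consists of $k$ parallel longitudes of $V_I$, cutting $D_i$ into one outer region $\Omega \subset V_I$ with boundary $C_i$ together with $k$ such circles, and $k$ inner meridian disks of $V_J$.

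First I would pin down $k$. Lemma~\ref{lem: hopf}(2) applied to $C_i$ and $D_i$ gives $k \geq 2$, while Lemma~\ref{lem: hopf}(1) shows each of the $k$ inner disks, being a meridian disk of $V_J$, meets $L_J$ in at least two points, so $2k \leq 4$. Therefore $k=2$, and each inner disk meets $L_J$ in exactly two points. Orienting $D_i$, the signed count of its inner meridian disks equals $lk(C_i, \beta_J)$, so $|lk(C_i, \beta_J)| \in \{0,2\}$. By Theorem~\ref{thm:sum} the link $L_I \cup \beta_J$ is Brunnian; if $L_I \neq C_i$, then $\{C_i, \beta_J\}$ is a proper sublink of a Brunnian link on at least three components and hence trivial, so $lk(C_i, \beta_J)=0$. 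Equivalently, $|lk(C_i, \beta_J)|=2$ forces $L_I = C_i$. The symmetric argument applied to the Brunnian link $\beta_I \cup L_J$ gives $|lk(C_j, \beta_I)|=2 \Rightarrow L_J = C_j$.

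Now I would split by the number of components of $L_J$ that meet $D_i$. If exactly one component $C_j$ is hit, each inner disk meets $C_j$ in two points, so $|lk(C_j, \beta_I)| \leq 2$ and is even; combined with $|lk(C_i, \beta_J)| \in \{0,2\}$, this produces the four cases $(1.0.0), (1.0.2), (1.2.0), (1.2.2)$, with the parenthetical constraints coming from the Brunnian step. If two or more components are hit, then $L_J$ has at least two components, so observations (Ei)/(Fi) from the proof of Theorem~\ref{thm:simplepattern} make each inner disk meet each component of $L_J$ in an even number of points; with total $2$ per disk, each inner disk meets exactly one component in two points, and the two inner disks must meet distinct components $C_j, C_{j'}$ (else only one component would be hit). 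The disk not meeting $C_j$ has geometric intersection $0$ with it, which combined with the algebraic lower bound forces $lk(C_j, \beta_I)=0$, and symmetrically $lk(C_{j'}, \beta_I)=0$. With $|lk(C_i, \beta_J)| \in \{0,2\}$ this yields cases $(2.0.0)$ and $(2.2.0)$, while $(2.\ast.2)$-type cases are excluded because $L_J$ has more than one component. This establishes (1). For (2), a free cross disk has $D_i \cap (L_I - C_i) \neq \emptyset$, so $L_I \neq C_i$; the Brunnian step forces $lk(C_i, \beta_J)=0$, leaving only $(1.0.0), (1.0.2), (2.0.0)$. The main subtlety I expect lies in the bookkeeping of signed versus geometric intersection numbers that underlies the parity claims and the identification of $lk(C_i, \beta_J)$ with the signed count of inner disks.
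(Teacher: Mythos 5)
Your proof is correct and follows the same route the paper intends: its entire proof of this lemma is the one-line appeal to ``linking number and Brunnian property,'' which is exactly the argument you spell out (pinning down the two meridian disks via Lemma~\ref{lem: hopf}, identifying $lk(C_i,\beta_J)$ and $lk(C_j,\beta_I)$ with signed counts over those disks, and using Theorem~\ref{thm:sum} to force vanishing linking numbers unless $L_I=C_i$ or $L_J=C_j$). Your elaboration, including the parity bookkeeping and the exclusion of the $(2.\ast.2)$-type cases, is a faithful and complete filling-in of that sketch.
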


\begin{proof}
On account of linking number and Brunnian property, these are all the allowed cases.
\end{proof}

In the following proposition, we call an arc component of $L_J - D_i$ an \emph{ear} if its endpoints are on the same side of $D_i$.

\begin{subcriterion}\label{prop:least}
Under Standard premise, suppose that there is only one cross disk, denoted $D_i$, and $\sharp (D_i \cap L_J )= 4$.

(1) In Case (1.0.0) (resp. (1.2.0)), there are 2 ears on one side (resp. different sides) of $D_i$ such that after deleting each ear $E$, there is an incredible circle for $U-E$ on the same (resp. the other) side of $D_i$.

(2) In Case (2.0.0) (resp. (2.2.0)), once deleting an ear $E$, there is an incredible circle for $U-E$ on the same (resp. the other) side of $D_i$.
\end{subcriterion}

\begin{proof}
We only give proof for Case (1.0.0), and other cases are similar. Using labels indicated in Fig. \ref{fig:6cases}, after deleting $E$, the disk $\tilde{D_1} \cup \tilde{A_2}$, after a perturbation, shows that $\tilde{C_2}$ is an incredible circle for $U-E$.
\end{proof}

In the following example, we use this criterion in Step 2. Then we use Criterion \ref{prop: arc} in the first paragraph of Step 3. Finally we reduce the problem of showing s-primeness into showing two knotoids are not isotopic.

\begin{example}\label{example:W5prime}
\begin{figure}[htbp]
		  \centering
    \includegraphics[width=\textwidth]{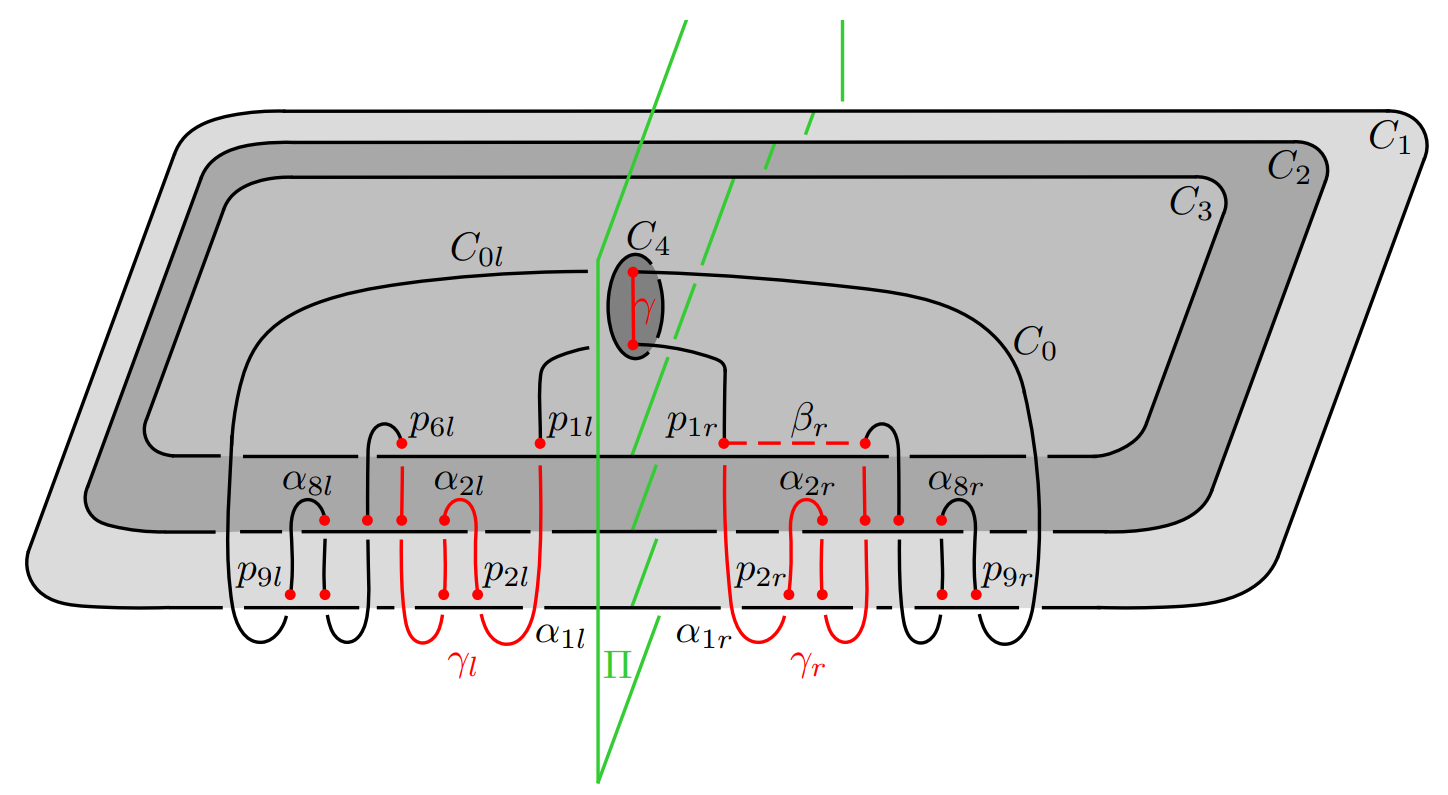}
    \caption{W(5) and disk system.}
	\label{fig:W5prime}
\end{figure}
The link W($5$) is s-prime. Suppose $T$ is an essential splitting torus.

\textbf{Step 1.} Take the disk $D_4$ bounded by $C_4$ as depicted in Fig. \ref{fig:W5prime}. By Proposition \ref{prop:simplepattern}, $C_0 \cup C_4$, also $C_0 \cup D_4$, are at the same side of $T$.

The red arc $\gamma$ in $D_4$ and the left arc in $C_0$ form a circle, say $C_{0l}$, and $C_{0l} \cup C_1 \cup C_2 \cup C_3$ is just W($4$). Notice that W($4$) is a Milnor link, an s-sum of two Borromean rings, as there is a torus splitting $C_{0l} \cup C_3$ and $C_1 \cup C_2$. Thus $T$ either splits $C_0 \cup D_4$ and $C_1 \cup C_2 \cup C_3$, or splits $C_0 \cup D_4 \cup C_3$ and $C_1 \cup C_2$. As we have ruled out the second possibility in Example \ref{example:W5In}, $T$ splits $C_0 \cup C_4$ and $C_1 \cup C_2 \cup C_3$.

\textbf{Step 2.} Take $L_J$-interior disk $D_4 ^J$ and exterior cross disk $D_3 ^E$ as depicted in Fig. \ref{fig:W5prime}. Then $\sharp (D_3 ^E \cap C_0 )= 4$ and this is Case (1.0.0) in Lemma \ref{lem: 6cases}. We claim $p_{1l}, p_{1r}$ are in one plausible disk and $p_{6l}, p_{6r}$ are in another plausible disk, where $\{ p_{1l}, p_{1r}, p_{6l}, p_{6r} \} = D_3 ^E \cap C_0$ as shown in Fig. \ref{fig:W5prime} (in this figure, $ p_{6r} $ is the right endpoint of $ \beta_r $ and we omit its label).

In fact, otherwise once we delete $\gamma_l$, by Criterion \ref{prop:least}(1), $\gamma_r$ would be capped by a disk whose boundary is the asserted incredible circle. Let $\beta_r$ be an arbitrary arc in the plausible disk connecting $\partial \gamma_r$. Then $\gamma_r \cup \beta_r$ is a circle capped by that disk. However, $\gamma_r \cup \beta_r$ is homotopically nontrivial in the complement of $C_1 \cup C_2$, a contradiction.

\textbf{Step 3.} Take $L_J$-interior disk $D_4 ^J$ and exterior cross disks $D_1 ^E$, $D_2 ^E$ and $D_3 ^E$ as depicted in Fig. \ref{fig:W5prime}. Since $p_{1l}$ and $p_{1r}$ are in one plausible disk, by Criterion \ref{prop: arc}(1), $\alpha_{1l}$ and $\alpha_{1r}$ are in one plausible cylinder and $p_{2l}$ and $p_{2r}$ are in one plausible disk. Notice that by Example \ref{example:Wnstable} the disks are all stable. It follows that $p_{il} \cup p_{ir}$ is the intersection of a plausible disk and $C_0$ for any $i=1,2,...,9$, and $\alpha_{il}$ and $\alpha_{ir}$ are in one plausible cylinder, for any $i=1,2,...,8$.

Consider $D_1 ^E \cup D_3 ^E \cup \alpha_{1l} \cup \alpha_{1r}$, then $\alpha_{1l}$ and $\alpha_{1r}$ are parallel. A standard innermost circle argument shows that $\alpha_{1l}$ and $\alpha_{1r}$ are parallel as well in the plausible cylinder. The shape in other plausible cylinders are similar. Consequently, there is a disk $D_0$ bounded by $C_0$ which is contained in $V_J$, intersects each plausible disk in a red arc and intersects $D_4 ^J$ in two green arcs as shown in Fig. \ref{fig:W5VJ}(1).

We see that $D_1 ^E$, $D_2 ^E$, $D_3 ^E$ and $L$ are all symmetric about the middle sphere $\Pi$, and we may assume $D_4 ^J \subset \Pi$. Clearly $D_0 \cap \Pi$ consists of circles and an arc connecting $\partial \gamma$. We can eliminate all the circle components by a standard innermost circle argument so that $D_0 \cap \Pi$ is an arc, say $\gamma_\Pi$, and then isotope $D_0$ so that the shape on $D_0$ of $\gamma_\Pi$ and the red arcs in Fig. \ref{fig:W5VJ}(1) are as shown in Fig. \ref{fig:W5VJ}(2).

\begin{figure}[htbp]
		  \centering
    \includegraphics[width=0.90\textwidth]{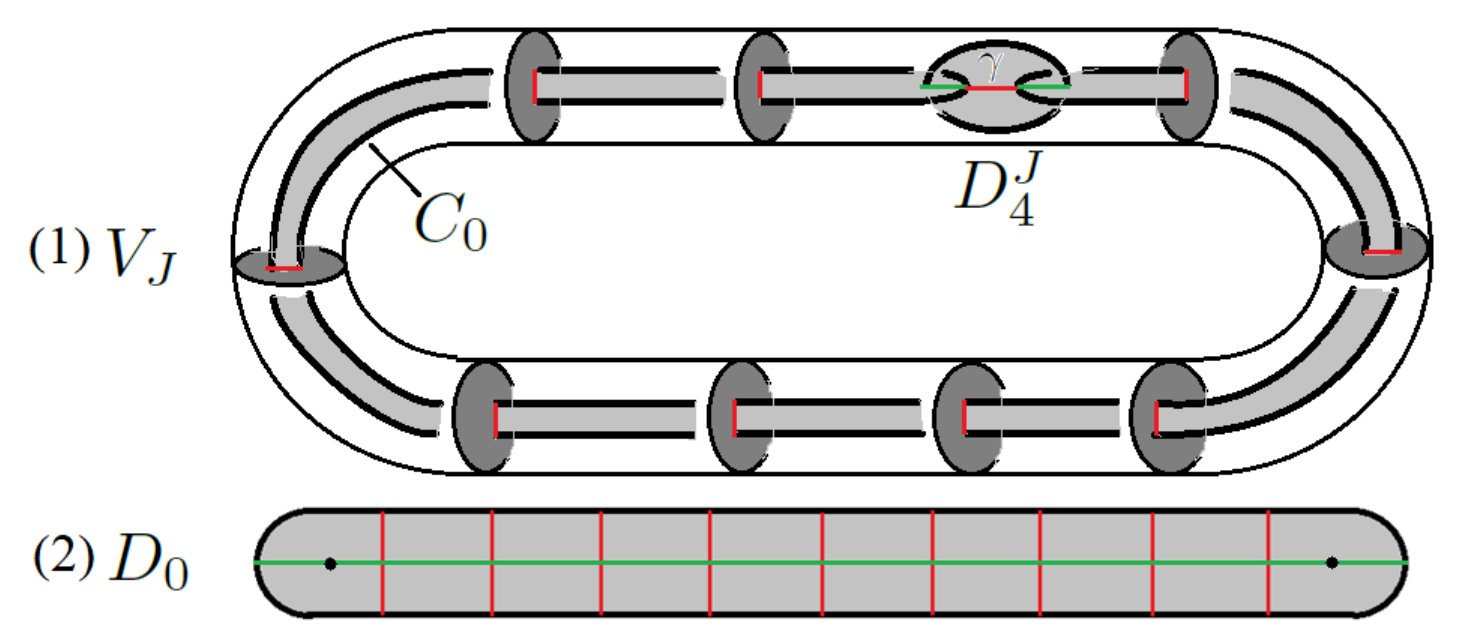}
    \caption{(1) $D_4 ^J \cup D_0$ in $V_J$. (2) The green arc is $\gamma_\Pi$.}
	\label{fig:W5VJ}
\end{figure}

\textbf{Step 4.} The left half of $D_0$ cut by $\gamma_\Pi$ indicates that $C_{0l}$ can be projected to $\Pi$ as a planar curve $\gamma_\cup \gamma_\Pi$, that is, parallel to $\gamma_\cup \gamma_\Pi$ and intersecting the three cross disks in the left halves of the red arcs in Fig. \ref{fig:W5VJ}. To complete the proof, it remains to show this is impossible.

In fact, on account of the order and directions in which $C_{0l}$ passes through the three cross disks, it is straightforward to show that the simple closed curve $\gamma_\cup \gamma_\Pi$ can only be as shown in Fig. \ref{fig:knotoids}(1). On the other hand, $C_{0l}$, projected to $\Pi$, is as shown in Fig. \ref{fig:knotoids}(2). Our problem is reduced to prove these two knotoids are not isotopic. In fact, connecting $a_1, b_2, b_3$ in Fig. \ref{fig:knotoids} forms two Brunnian links. One can verify that the left one is an s-sum of Whitehead link and Borromean rings, while the right one is s-prime, which can be proven by Criterion \ref{prop:least}(1).

\begin{figure}[htbp]
		  \centering
    \includegraphics[width=0.80\textwidth]{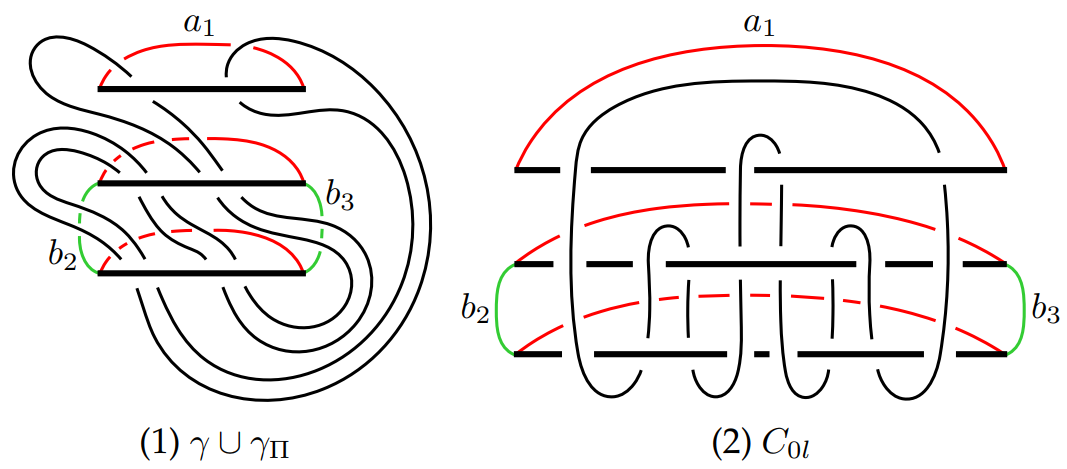}
    \caption{The thick arcs are the projection of left halves of $C_1$, $C_2$ and $C_3$.}
	\label{fig:knotoids}
\end{figure}
\end{example}

\section{Conclusions and generalizations}

\subsection{More hyperbolic Brunnian links.}\label{subsect:examples}

Based on the previous examples, we are now confident that Theorem \ref{thm:series} can be proved.

\begin{proof}[Proof of Theorem \ref{thm:series}]
We have pointed out that all these links are untied in Subsection \ref{subsect:untiednessthm}. We now prove they are s-prime.

(1) The proof in Example \ref{example:debrunner} works without loss of generality for $n \ge 5$. For $n =3$, the proof is just simpler since it suffices to prove every component is simple. For deBrunner($2$), use Case (1.2.0) in Proposition \ref{prop:least}(1). For deBrunner($4$), the case (v) in the proof of Example \ref{example:debrunner} needs to be modified by using Proposition \ref{prop:least}(1) and then the approaches in \cite{BW} to show there is no incredible circle.

(2) A proof of s-primeness will follow Example \ref{example:W5prime} by induction on $n$. We know W($5$) is s-prime. For any $n>5$, W($n-1$) is s-prime, an argument similar to the one used in the first paragraph in Example \ref{example:W5prime} shows that the only possibility is that $T$ splits $C_0 \cup D_n$ from all the other components of the link. The remainder of the proof is identical, except that in the last paragraph we distinguish two knotoids in a simpler manner as follows. For each thick arc, connecting the red arcs from below as shown in Fig. \ref{fig:knotoids}, we get two links. The left one will be a Milnor link, which is an s-sum\cite{BM}, while the right one will be W($n-1$), which is s-prime by induction.

(3) For any $m>1,n>2$, we have shown that Torus($m,n$) is s-prime in Example \ref{example:torus(m,n)}. For $m>1,n=2$, the proof is analogues to that when $m>1,n>2$ and is left to the reader. For $n=1$, the link Torus($m,1$) is isotopic to deBrunner($m$). For $m=1,n>2$, Torus($1,n$) is Brunn's chain (see \cite{Brunn, BW} or recall Example \ref{example:brunnlink}). The proof of that Brunn's chains are s-prime is quite similar to that for deBrunner($n$) and so is omitted.

(4) By suitable modification to the proof of that Torus($m,n$) and Carpet($m,n,p$) are s-prime, we can show  Tube($m,n$) is s-prime for $m>0,n>1$. We leave the details to the reader.

(5) When $n=1$, Carpet($m,n,p$) is the $p$-component Brunn's chain. When $n>1$, the proof in Example \ref{example:carpet} shows Carpet($m,n,p$) are all s-prime without loss of generality.

(6) The proof in Example \ref{example:lampprime} shows the links in this family are all s-prime without loss of generality.
\end{proof}

We detect s-primeness for all the Brunnian links in literature. We know far more Brunnian links are s-prime, including
\begin{enumerate}
\item Tait series (Fig. 2 in \cite{J}),
\item Baas' solids (Fig. 16 in \cite{BCS}),
\item Snakes (\cite{BW}),
\item Cirrus (\cite{BW}),
\item Wheels (\cite{BW}) 
\end{enumerate}
and so on. As they are also untied as explained in Subsection \ref{subsect:untiednessthm}, combining Theorem \ref{thm:classification0}, we have
\begin{thm}
The five families of links above are hyperbolic. $ \square $
\end{thm}

\subsection{Generalizations}\label{subsect: generallinks}

While we have considered only Brunnian links in this paper, the methods can be extended to links having some unknotted components. 

Let us clarify some basic facts. A link $L$ in $S^3$ is \emph{unsplit} if no sphere in the complement space splits $L$. Let $T$ be an essential torus in the complement space. If $T$ does not split $L$, then $T$ must be knotted and $L$ is contained in the solid torus bounded by $T$ in a geometrically essential way. If $T$ splits $L$, then $T$ can be either knotted or unknotted.

\subsubsection{Hyperbolicity of links with some unknotted components.}

Replacing the conditions of disks all by ``stable'', Theorem \ref{thm:untie}, Theorem \ref{thm:interiorprime} and Theorem \ref{thm:simplepattern} hold for more general links.

\begin{thm}\label{thm:unsplit}
(Decision theorem for unsplitting essential torus)

Let $L$ be an unsplit link so that $C_i$ is an unknot component of $L$ for each $1 \le i \le k$. Let $D_i$ be a stable disk bounded by $C_i$ for each $1 \le i \le k$ so that $U = \bigcup_{i=1}^k D_i \cup L$ is a disk system. Then there is no unsplitting essential torus in  $S^3 -L$ if and only if there is no incompressible knotted torus in $S^3 -U$.
\end{thm}

\begin{proof}
The proof is similar but only simpler as that of Theorem \ref{thm:untie}. 
\end{proof}

\begin{thm}\label{thm:split}
(Criterion for splitting essential torus)

Let $L_I$ and $L_J$ be two proper sublinks of an unsplit link $L$ with $L_I \sqcup L_J = L$, let $C_i(i \in I_0)$ and $C_j(j \in J_0)$ be some unknot components of $L_I$ and $L_J$ respectively and let  $D_i ^I$($i \in I_0$) and $D_j ^J$($j \in J_0$) be stable disks bounded by $C_i ^I$($i \in I_0$) and $C_j ^J$($j \in J_0$). Assume that
\begin{center}
$U_I = L_I  \bigcup_{i \in I} D_i ^I$

$U_J = L_J  \bigcup_{j \in J} D_j ^J$
\end{center}
are disjoint disk systems, and assume that any torus splitting $U_I \cup U_J$, if exists, is $\partial$-parallel to a component of $L$. Then there is no essential torus splitting $L_I$ and $L_J$.
\end{thm}

\begin{proof}
This theorem can be proved in a similar way as in the proof of Theorem \ref{thm:interiorprime} together with Step 1 and Step 2 in the proof of Theorem \ref{thm:simplepattern}. The major change is that we need to replace torus $T$ instead of isotoping it in Step 1.2 and Step 2.2.
\end{proof}

We can still utilize cross disks as follows.

\begin{thm}\label{thm:simplepattern2}
(Simple intersection pattern theorem for cross disks)

Let $T$ be an essential torus in the complement of an unsplit link $L$. Let $V$ be a solid torus bounded by $T$ in $S^3$, so that $L \cap V$ and $L \cap X = L_I = \cup_{i \in I} C_i$  are proper sublinks of $L$, where $X = S^3 - int V$. Let $Q, R, S$ be mutually disjoint subsets of $I$, and let 
$D_q ^I$($q \in Q$), $D_r ^E$($r \in R$) and $D_s ^F$($s \in S$) be $L_I$-interior disks, exterior cross disks, and free cross disks respectively. 

Assume that each disk is stable, the cross disks are mutually disjoint, and the union of these disks and $L$ forms a disk system.

Then $L$ is split by an essential torus which each disk intersects in simple intersection pattern.
\end{thm}

\begin{proof}
This theorem can be proved in a similar way as in Step 1 and Step 3 in the proof of Theorem \ref{thm:simplepattern}. The major change is that we need to replace torus $T$ instead of isotoping it in Step 1.2 and Step 3.2. 
\end{proof}

Recall from \cite{Th} that if an unsplit link has atoroidal complement space and is not Seifert, then the link is hyperbolic. Using the above theorems, we can give quick proofs for hyperbolicity of many links in Thurston's book \cite{Th}, and give a simpler proof for that the links in \cite{Lee} Section 5 (Fig. 2) are hyperbolic.

\subsubsection{Using spanning surfaces instead of spanning disks.}

It is worth to explore how to generalize our methods by using spanning surfaces for some components instead of just using spanning disks. Here we only give an example of using spanning annuli.

To show no torus splits $C_{01} \cup C_{02} \cup C_{03}$ and $C_1 \cup C_2$ in Fig. \ref{fig:generalization}, we may take the annulus $A$ bounded by $C_{01} \cup C_{02}$, intersecting $C_{03}$ in 4 points. Firstly, we can prove there is no incredible circle on $A$. Then if such a torus exists, say $T$, which bounds a solid torus  $V$ containing $C_{01} \cup C_{02} \cup C_{03}$, we can prove $A \subset V$ after isotopy. In fact, we can eliminate inessential circles in $T$ as demonstrated before. To eliminate annulus regions in $A$ outside $V$, the main point of our argument is that for any $i=1,2,3$, $C_{0i} \cup C_1 \cup C_2$ is a Borromean rings, which is atoroidal, and thus $C_{01}$ and $C_{02}$ can only be cores of $V$, and such annulus regions in $A$ can only be $\partial$-parallel to $T$. Finally, it is easy to see no torus splits $C_1 \cup C_2$ and $A \cup C_{03}$ by considering a deformation retraction of $A \cup C_{03}$.

\begin{figure}[htbp]
		  \centering
    \includegraphics[width=0.60\textwidth]{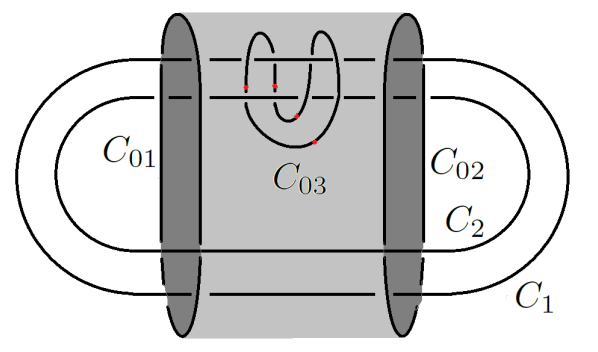}
    \caption{No torus splits $C_{01} \cup C_{02} \cup C_{03}$ and $C_1 \cup C_2$.}
	\label{fig:generalization}
\end{figure}

\textbf{Acknowledgement}:
The author is grateful to Professor Jiming Ma for helpful discussions and generous supports. The author thanks Weibiao Wang, who drew most of the figures in this paper and verified that Cirrus (Fig. \ref{fig:cirrus}) is hyperbolic. The author sincerely thanks an anonymous referee for carefully reading the full text and for correcting many presentational problems in a previous version.

\bibliographystyle{amsalpha}

\section{Appendix}

\subsection{Tait series}\label{subsect:appendix1}

We provide an alternative proof for that a series of alternating Brunnian links are s-prime to illustrate the method in Subsection \ref{subsect:interioronly}.

\begin{example}

\begin{figure}[htbp]
		  \centering
    \includegraphics[scale=0.35]{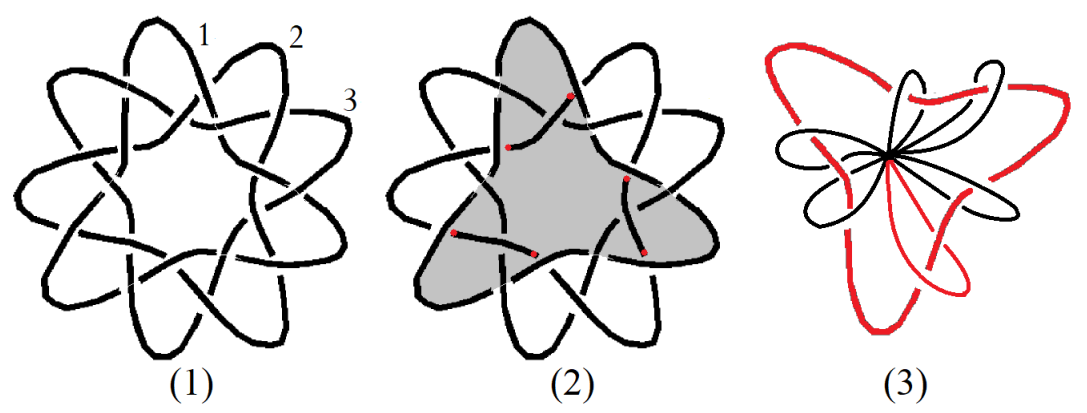}
    \caption{A link in Tait series (Fig. 2 in \cite{J}).}
	\label{fig:borromean2}
\end{figure}

Fig. \ref{fig:borromean2}(1) shows a link in a Tait series. In view of symmetry, we only need to prove no essential torus splits $C_3$ from $C_1$ and $C_2$. Take the disk $D_1$ as shown in Fig. \ref{fig:borromean2}(2), which is stable by an argument similar to the one used in Example \ref{example:lampstable}. Then a deformation retraction of $D_1 \cup C_2 $ is as shown in Fig. \ref{fig:borromean2}(3). In view of the complement space of the red part, we see that the only splitting torus is $\partial$-parallel to $C_3$.
\end{example}

\subsection{Hyperbolicity of Cirrus}\label{subsect:appendix2}
We show that Cirrus in Fig. \ref{fig:cirrus} is untied. That Snakes and Wheels are untied can be proved similarly.

Take the union of credible disks depicted in Fig. \ref{fig:Cirruscomplex} to form a disk system $U$. The sequence of isotopies of $U$ illustrated in Fig. \ref{fig:Cirrusgraph} shows that $S^3 - int N(U)$ is a handlebody, and thus atoroidal. By Theorem \ref{thm:untie}, Cirrus is untied, and thus hyperbolic.
\begin{figure}[htbp]
		  \centering
    \includegraphics[width=\textwidth]{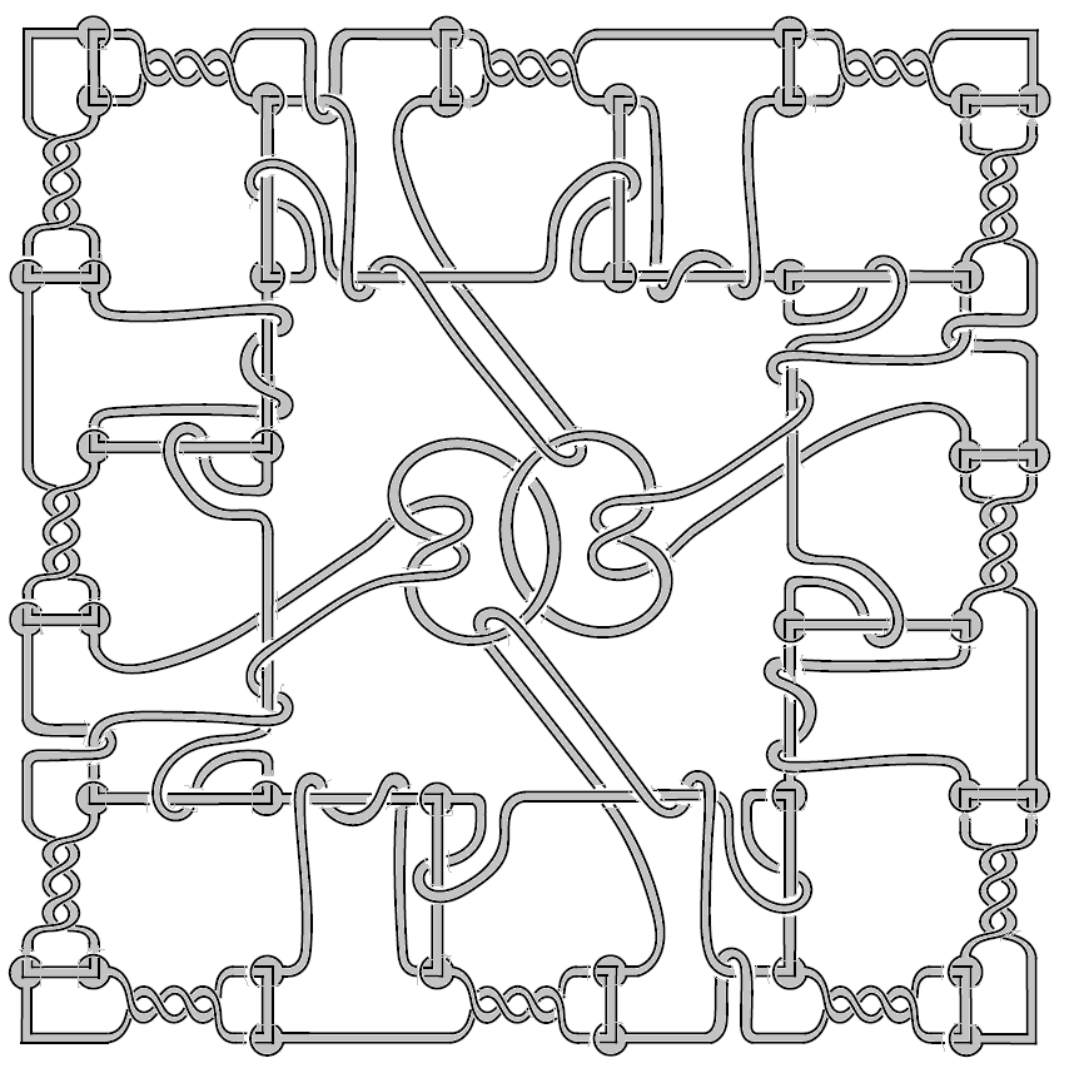}
    \caption{A disk system for Cirrus.}
	\label{fig:Cirruscomplex}
\end{figure}

\begin{figure}[htbp]
		  \centering
    \includegraphics[scale=0.65]{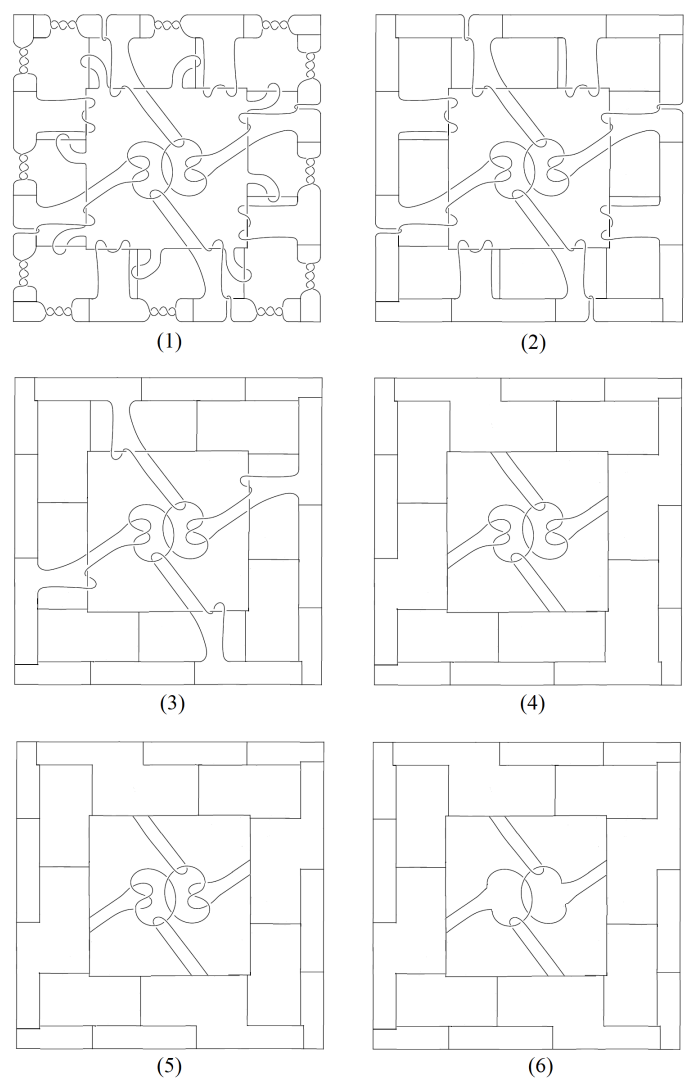}
    \caption{A sequence of transformations for a deformation retraction graph of $U$.}
	\label{fig:Cirrusgraph}
\end{figure}

\end{document}